\definecolor{dark-gray}{gray}{0.3}
\definecolor{dkgray}{rgb}{.4,.4,.4}
\definecolor{dkblue}{rgb}{0,0,.5}
\definecolor{medblue}{rgb}{0,0,.75}
\definecolor{rust}{rgb}{0.5,0.1,0.1}
\newtheorem{theorem}{Theorem}[section]
\newtheorem{lemma}[theorem]{Lemma}
\newtheorem{proposition}[theorem]{Proposition}
\newtheorem{fact}[theorem]{Fact}
\newtheorem{corollary}[theorem]{Corollary}
\newtheorem{claim}[theorem]{Claim}
\theoremstyle{definition}
\newtheorem{definition}[theorem]{Definition}
\newtheorem{remark}[theorem]{Remark}
\newcommand{\term}{\emph}
\numberwithin{equation}{section} 
\numberwithin{figure}{section}
\numberwithin{table}{section}
\numberwithin{recipe}{section}
\providecommand{\mathbold}[1]{\bm{#1}}
\renewcommand{\phi}{\varphi}
\newcommand{\eps}{\varepsilon}
\newcommand{\half}{\tfrac{1}{2}}
\newcommand{\econst}{\mathrm{e}}
\newcommand{\iunit}{\mathrm{i}}
\newcommand{\Id}{\mathbf{I}}
\newcommand{\coll}[1]{\mathscr{#1}}
\providecommand{\mathbbm}{\mathbb} 
\newcommand{\R}{\mathbbm{R}}
\newcommand{\C}{\mathbbm{C}}
\newcommand{\abs}[1]{\left\vert {#1} \right\vert}
\newcommand{\abssq}[1]{{\abs{#1}}^2}
\newcommand{\real}{\operatorname{Re}}
\newcommand{\imag}{\operatorname{Im}}
\newcommand{\diff}[1]{\mathrm{d}{#1}}
\newcommand{\idiff}[1]{\, \diff{#1}}
\newcommand{\Expect}{\operatorname{\mathbb{E}}}
\newcommand{\vct}[1]{\mathbold{#1}}
\newcommand{\mtx}[1]{\mathbold{#1}}
\newcommand{\transp}{\mathsf{t}}
\newcommand{\adj}{*}
\newcommand{\trace}{\operatorname{tr}}
\newcommand{\ntr}{\operatorname{\bar{\trace}}}
\newcommand{\ip}[2]{\left\langle {#1},\ {#2} \right\rangle}
\newcommand{\abssqip}[2]{\abssq{\ip{#1}{#2}}}
\newcommand{\norm}[1]{\left\Vert {#1} \right\Vert}
\newcommand{\smnorm}[2]{{\bigl\Vert {#2} \bigr\Vert}_{#1}}
\newcommand{\pnorm}[2]{\norm{#2}_{#1}}
\newcommand{\triplenorm}[1]{\left\vert\!\left\vert\!\left\vert {#1} \right\vert\!\right\vert\!\right\vert}
\newcommand{\Cat}{\mathrm{Cat}}
\title{Second-Order Matrix Concentration Inequalities}
\author[J.~A.~Tropp]{Joel~A.~Tropp}
\thanks{Email: \url{jtropp@cms.caltech.edu}. Tel: 626.395.5957.}
\date{13 March 2015. Revised 21 April 2015 and 3 August 2016.}
\subjclass[2010]{Primary: 60B20. Secondary: 60F10, 60G50, 60G42.}
\keywords{Concentration inequality, moment inequality, random matrix.}
\begin{document}

\begin{abstract}
Matrix concentration inequalities give bounds for the spectral-norm
deviation of a random matrix from its expected value.  These results have a
weak dimensional dependence that is sometimes, but not always, necessary.
This paper identifies one of the sources of the dimensional term and exploits
this insight to develop sharper matrix concentration inequalities.
In particular, this analysis delivers two refinements of the matrix
Khintchine inequality that use information beyond the matrix variance
to reduce or eliminate the dimensional dependence.  
\end{abstract}

\maketitle

\section{Motivation}

Matrix concentration inequalities provide spectral information about
a random matrix that depends smoothly on many independent random variables.
In recent years, these results have become a dominant tool in applied
random matrix theory.  There are several reasons for the success of this approach.

\vspace{0.5pc}

\begin{itemize} \setlength{\itemsep}{0.5pc}
\item	\textbf{Flexibility.}  Matrix concentration applies to a wide range of random matrix models.
In particular, we can obtain bounds for the spectral norm of a sum of independent random matrices in terms
of the properties of the summands.

\item	\textbf{Ease of Use.}  For many applications, matrix concentration tools require only a small
amount of matrix analysis.  No expertise in random matrix theory is required to invoke the results.
 
\item	\textbf{Power.}  For a large class of examples, including independent sums,
matrix concentration bounds are provably close to optimal.
\end{itemize}

\vspace{0.5pc}

\noindent
See the monograph~\cite{Tro15:Introduction-Matrix} for an overview of this theory
and a comprehensive bibliography.

The matrix concentration inequalities in the literature are suboptimal
for certain examples because of a weak dependence on the dimension of the random matrix.
Removing this dimensional term is difficult because there are many situations where it is
necessary.
The purpose of this paper is to identify one of the sources of the dimensional factor.
Using this insight, we will develop some new matrix concentration
inequalities that are qualitatively better than the current generation of results,
although they sacrifice some of our desiderata.
Ultimately, we hope that this line of research will lead to general tools for
applied random matrix theory that are flexible, easy to use, and that give sharp
results in most cases.

\section{The Matrix Khintchine Inequality}

To set the stage, we present and discuss the primordial matrix concentration result,
the \term{matrix Khintchine inequality},
which describes the behavior of a special random matrix model, called a \term{matrix Gaussian series}.
This result already exhibits the key features of more sophisticated matrix concentration inequalities,
and it can be used to derive concentration bounds for more general models.
As such, the matrix Khintchine inequality serves as a natural starting point for deeper investigations.

\subsection{Matrix Gaussian Series}

In this work, we focus on an important class of random matrices
that has a lot of modeling power but still supports an interesting theory.

\begin{definition}[Matrix Gaussian Series]
Consider fixed Hermitian matrices $\mtx{H}_1, \dots, \mtx{H}_n$ with common dimension $d$,
and let $\{ \gamma_1, \dots, \gamma_n \}$ be an independent family of standard normal random variables.
Construct the random matrix
\begin{equation} \label{eqn:matrix-gauss-series}
\mtx{X} := \sum\nolimits_{i=1}^n \gamma_i \mtx{H}_i.
\end{equation}
We refer to a random matrix with this form as a matrix Gaussian series with Hermitian coefficients
or, for brevity, an \term{Hermitian matrix Gaussian series}.
\end{definition}

Matrix Gaussian series enjoy a surprising amount of modeling power.  It is easy to see that
we can express any random Hermitian matrix with jointly Gaussian entries in the
form~\eqref{eqn:matrix-gauss-series}.
More generally, we can use matrix Gaussian series to analyze a sum of independent,
zero-mean, random, Hermitian matrices $\mtx{Y}_1, \dots, \mtx{Y}_n$.  Indeed,
for any norm $\triplenorm{\cdot}$ on matrices,
\begin{equation} \label{eqn:cond-symm}
\Expect \triplenorm{ \sum\nolimits_{i=1}^n \mtx{Y}_i }
	\leq \sqrt{2\pi} \cdot \Expect\bigg[ \Expect\bigg[ \triplenorm{ \sum\nolimits_{i=1}^n \gamma_i \mtx{Y}_i }
	\, \big\vert\, \mtx{Y}_1, \dots, \mtx{Y}_n \bigg] \bigg].
\end{equation}
The process of passing from an independent sum to a conditional Gaussian series is called
\term{symmetrization}.  See~\cite[Lem.~6.3 and Eqn.~(4.8)]{LT91:Probability-Banach} for details
about this calculation.  Furthermore, some techniques for Gaussian series can be adapted to
study independent sums directly without the artifice of symmetrization.

Note that our restriction to Hermitian matrices is not really a limitation.  We can also
analyze a rectangular matrix $\mtx{Z}$ with jointly Gaussian entries by working
with the Hermitian dilation of $\mtx{Z}$, sometimes known as the Jordan--Wielandt matrix.
See~\cite[Sec.~2.1.16]{Tro15:Introduction-Matrix} for more information on this approach.

\subsection{The Matrix Variance}

Many matrix concentration inequalities are expressed most naturally in terms of a matrix
extension of the variance.

\begin{definition}[Matrix Variance]
Let $\mtx{X}$ be a random Hermitian matrix.  The \term{matrix variance} is the deterministic matrix
$$
\mathbf{Var}(\mtx{X}) := \Expect \mtx{X}^2 - (\Expect \mtx{X})^2.
$$
We use the convention that the power binds before the expectation.
\end{definition}

In particular, consider a matrix Gaussian series $\mtx{X} := \sum\nolimits_{i=1}^n \gamma_i \mtx{H}_i$.
It is easy to verify that $$
\mathbf{Var}(\mtx{X}) = \Expect \mtx{X}^2
	=  \sum\nolimits_{i,j=1}^n \Expect[ \gamma_i \gamma_j ] \cdot \mtx{H}_i \mtx{H}_j
	= \sum\nolimits_{i=1}^n \mtx{H}_i^2.
$$
We see that the matrix variance has a clean expression in terms of the coefficients of the
Gaussian series, so it is easy to compute in practice.

\subsection{The Matrix Khintchine Inequality}

The matrix Khintchine inequality is a fundamental fact about the behavior
of matrix Gaussian series.  The first version of this
result was established by Lust-Piquard~\cite{LP86:Inegalites-Khintchine}, and
the constants were refined in the papers~\cite{Pis98:Non-commutative-Vector,Buc01:Operator-Khintchine}.
The version here is adapted from~\cite[Sec.~7.1]{MJCFT14:Matrix-Concentration}.

\begin{proposition}[Matrix Khintchine] \label{prop:khintchine}
Consider an Hermitian matrix Gaussian series $\mtx{X} := \sum\nolimits_{i=1}^n \gamma_i \mtx{H}_i$,
as in~\eqref{eqn:matrix-gauss-series}.  Introduce the matrix standard deviation parameter
\begin{equation} \label{eqn:mtx-stdev-intro}
\sigma_{q}(\mtx{X}) := \pnorm{q}{ \mathbf{Var}(\mtx{X})^{1/2} }
	= \pnorm{q}{ \left(\sum\nolimits_{i=1}^n \mtx{H}_i^2 \right)^{1/2} }
\quad\text{for $q \geq 1$.}
\end{equation}
Then, for each integer $p \geq 1$,
\begin{equation} \label{eqn:khintchine-intro}
\sigma_{2p}(\mtx{X})
	\quad\leq\quad \big( \Expect \pnorm{2p}{ \mtx{X} }^{2p} \big)^{1/(2p)}
	\quad\leq\quad \sqrt{2p-1} \cdot \sigma_{2p}(\mtx{X}).
\end{equation}
The symbol $\pnorm{q}{\cdot}$ denotes the Schatten $q$-norm.
\end{proposition}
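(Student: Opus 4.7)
The plan is to establish the lower and upper bounds by separate arguments.

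For the lower bound, I would invoke Jensen's inequality. Since $\mtx{X}$ is Hermitian and $p$ is a positive integer, $\pnorm{2p}{\mtx{X}}^{2p} = \trace(\mtx{X}^{2p}) = \trace((\mtx{X}^2)^p)$. The functional $\mtx{M} \mapsto \trace(\mtx{M}^p)$ is convex on the positive-semidefinite cone, a standard consequence of the convexity of the scalar map $t \mapsto t^p$ via the spectral calculus. Jensen's inequality applied to the random PSD matrix $\mtx{X}^2$ then yields
\begin{equation*}
\Expect \trace((\mtx{X}^2)^p) \geq \trace\bigl((\Expect \mtx{X}^2)^p\bigr) = \trace\Bigl(\bigl(\textstyle\sum_i \mtx{H}_i^2\bigr)^p\Bigr) = \sigma_{2p}(\mtx{X})^{2p},
\end{equation*}
using the variance identity $\Expect \mtx{X}^2 = \sum_i \mtx{H}_i^2$ recorded above.

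For the upper bound, I would apply Gaussian integration by parts. Writing $M_{2p} := \Expect \trace(\mtx{X}^{2p}) = \sum_i \Expect[\gamma_i \cdot \trace(\mtx{H}_i \mtx{X}^{2p-1})]$ and invoking Stein's identity $\Expect \gamma_i \phi = \Expect \partial_{\gamma_i} \phi$, together with the Leibniz expansion $\partial_{\gamma_i} \mtx{X}^{2p-1} = \sum_{k=0}^{2p-2} \mtx{X}^k \mtx{H}_i \mtx{X}^{2p-2-k}$, produces
\begin{equation*}
M_{2p} = \sum_{k=0}^{2p-2} \Expect \sum_i \trace\bigl(\mtx{H}_i \mtx{X}^k \mtx{H}_i \mtx{X}^{2p-2-k}\bigr),
\end{equation*}
a sum of exactly $2p-1$ terms. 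The goal is to bound each term by $\sigma_{2p}(\mtx{X})^2 \cdot M_{2p}^{(2p-2)/(2p)}$. Summing over $k$ would then give the recursion $M_{2p}^{2/(2p)} \leq (2p-1)\,\sigma_{2p}(\mtx{X})^2$, which is exactly the desired bound $M_{2p}^{1/(2p)} \leq \sqrt{2p-1}\,\sigma_{2p}(\mtx{X})$.

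The main obstacle lies in the cross-term estimate. The endpoint cases $k \in \{0, 2p-2\}$ collapse (after summing in $i$) to $\trace\bigl((\sum_i \mtx{H}_i^2)\,\mtx{X}^{2p-2}\bigr)$, which is handled directly by noncommutative H\"older with Schatten exponents $p$ and $p/(p-1)$, using $\|\sum_i \mtx{H}_i^2\|_p = \sigma_{2p}(\mtx{X})^2$. For intermediate $k$, however, the factors $\mtx{X}^k$ and $\mtx{X}^{2p-2-k}$ sit on opposite sides of each $\mtx{H}_i$ and cannot be fused into $\mtx{X}^{2p-2}$ by inspection, since they fail to commute with $\mtx{H}_i$. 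The correct route is complex interpolation between the two endpoints (a Stein three-line argument), or equivalently an appeal to Lieb's concavity theorem for maps of the form $(\mtx{A},\mtx{B}) \mapsto \trace(\mtx{H}_i \mtx{A}^s \mtx{H}_i \mtx{B}^{1-s})$ on the PSD cone with $s \in [0,1]$; this delivers a uniform bound matching the endpoint value for every intermediate $k$. Pushing the interpolation through cleanly, with the right pairing of Schatten exponents so that the final recursion is sharp, is the technical heart of the proof.
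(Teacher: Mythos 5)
Your lower bound via Jensen and your upper bound skeleton — integration by parts, identification of the variance $\mtx{V} = \sum_i \mtx{H}_i^2$ on the two endpoint terms, H\"older with exponents $(p, p/(p-1))$, Lyapunov, and the bootstrap $M_{2p}^{1/p} \leq (2p-1)\sigma_{2p}^2$ — all match the paper's argument exactly. The place where the proposal stops short of a proof is precisely the cross-term estimate you flag as ``the technical heart,'' and your description of how to fill that gap is partly misdirected.

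The paper does not use complex interpolation or any Lieb-type theorem here; it proves a simple pointwise inequality (Proposition~\ref{prop:matrix-heinz}): for Hermitian $\mtx{H}, \mtx{A}$ and integers $0 \leq q \leq r$,
\begin{equation*}
\trace\big[\mtx{H}\mtx{A}^q \mtx{H}\mtx{A}^{r-q}\big] \leq \trace\big[\mtx{H}^2 \abs{\mtx{A}}^r\big].
\end{equation*}
The crucial observation you are missing is that $\mtx{A}^q$ and $\mtx{A}^{r-q}$ are functions of \emph{the same} Hermitian matrix $\mtx{A}$, so they can be simultaneously diagonalized. Writing $\mtx{A} = \sum_i a_i \mathbf{E}_{ii}$ reduces the trace to $\sum_{i,j} a_i^q a_j^{r-q}\abssq{h_{ij}}$, and since $\abssq{h_{ij}}$ is symmetric in $(i,j)$ and nonnegative, the estimate follows from the scalar fact that $\theta \mapsto \alpha^\theta\beta^{1-\theta} + \alpha^{1-\theta}\beta^\theta$ is convex on $[0,1]$ and minimized at $\theta=1/2$. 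No analyticity, no three-lines theorem, no joint concavity. Applied with $r = 2p-2$, this makes every term — endpoint and intermediate alike — equal to $\trace[\mtx{H}_i^2 \mtx{X}^{2p-2}]$, and your recursion closes.

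On your two suggested routes: a Stein/Hadamard three-lines argument in the exponent \emph{does} work (the boundary bound $\abs{\trace[\mtx{H}\mtx{A}^{\iunit t}\mtx{H}\mtx{A}^{r-\iunit t}]} \leq \trace[\mtx{H}^2\mtx{A}^r]$ follows from Cauchy--Schwarz on the Hilbert--Schmidt inner product together with unitary invariance), but it is heavier machinery than the problem requires, and you have not carried it out. Lieb's concavity theorem, on the other hand, is the wrong tool: it asserts joint concavity of $(\mtx{A},\mtx{B})\mapsto \trace[\mtx{K}^*\mtx{A}^s\mtx{K}\mtx{B}^{1-s}]$ in the matrix arguments for \emph{fixed} $s$, whereas what you need is a comparison across different values of the exponent $s$ for a \emph{fixed} matrix. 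Invoking it would not produce the required estimate. So: correct structure, correct endpoint analysis, correct algebra, but the central lemma is left as a black box and one of the two proposed ways to open it does not apply.
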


\noindent
The lower bound in~\eqref{eqn:khintchine-intro} is simply Jensen's inequality.
Section~\ref{sec:khintchine} contains a short proof of the upper bound.

The matrix Khintchine inequality also yields an estimate for the spectral
norm of a matrix Gaussian series.  This type of result is often more useful
in practice.

\begin{corollary}[Matrix Khintchine: Spectral Norm] \label{cor:khintchine-spectral}
Consider an Hermitian matrix Gaussian series $\mtx{X} := \sum\nolimits_{i=1}^n \gamma_i \mtx{H}_i$ with dimension $d$,
as in~\eqref{eqn:matrix-gauss-series}.  Introduce the matrix standard deviation parameter
$$
\sigma(\mtx{X}) := \norm{ \mathbf{Var}(\mtx{X}) }^{1/2} = \norm{ \sum\nolimits_{i=1}^n \mtx{H}_i^2 }^{1/2}.
$$
Then
\begin{equation} \label{eqn:khintchine-spectral}
\frac{1}{\sqrt{2}} \cdot \sigma(\mtx{X})
	\quad\leq\quad \Expect \norm{ \mtx{X} }
	\quad\leq\quad \sqrt{\econst \, (1 + 2\log d)} \cdot \sigma(\mtx{X}).
\end{equation}
The symbol $\norm{\cdot}$ denotes the spectral norm, also known as the $\ell_2$ operator norm.
\end{corollary}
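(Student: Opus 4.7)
\emph{Lower bound.} The map $\mtx{A} \mapsto \lambda_{\max}(\mtx{A})$ is convex on Hermitian matrices, so Jensen's inequality applied to the PSD random matrix $\mtx{X}^2$ gives
\[
\Expect \norm{\mtx{X}}^2 = \Expect \lambda_{\max}(\mtx{X}^2) \;\geq\; \lambda_{\max}\bigl(\Expect \mtx{X}^2\bigr) = \norm{\mathbf{Var}(\mtx{X})} = \sigma(\mtx{X})^2.
\]
To pass from the second moment to the first, I would invoke the Kahane--Khintchine inequality for Gaussian-valued random variables, which says $(\Expect \norm{\mtx{X}}^2)^{1/2} \leq \sqrt{2}\cdot\Expect\norm{\mtx{X}}$. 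Combining the two estimates yields $\Expect\norm{\mtx{X}} \geq \sigma(\mtx{X})/\sqrt{2}$.

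\emph{Upper bound.} The plan is to sandwich the spectral norm by the Schatten $2p$-norm and feed this to Proposition~\ref{prop:khintchine}. Since $\norm{\mtx{X}} \leq \pnorm{2p}{\mtx{X}}$, Jensen together with the matrix Khintchine inequality give
\[
\Expect \norm{\mtx{X}} \;\leq\; \Expect \pnorm{2p}{\mtx{X}} \;\leq\; \bigl(\Expect \pnorm{2p}{\mtx{X}}^{2p}\bigr)^{1/(2p)} \;\leq\; \sqrt{2p-1}\cdot \sigma_{2p}(\mtx{X}).
\]
Because $\mathbf{Var}(\mtx{X})^{1/2}$ is a $d \times d$ matrix, the elementary Schatten--operator comparison $\pnorm{q}{\mtx{A}} \leq d^{1/q}\norm{\mtx{A}}$ produces $\sigma_{2p}(\mtx{X}) \leq d^{1/(2p)}\sigma(\mtx{X})$, and so
\[
\Expect\norm{\mtx{X}} \;\leq\; \sqrt{2p-1}\cdot d^{1/(2p)}\cdot \sigma(\mtx{X}).
\]

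\emph{Optimization.} The parameter $p$ is free. Taking $p = \max\{1,\lceil \log d\rceil\}$ forces $d^{1/(2p)} \leq \sqrt{\econst}$ (because $2p \geq 2\log d$) and $2p-1 \leq 1 + 2\log d$, which is exactly the shape needed to deliver the advertised bound $\sqrt{\econst(1 + 2\log d)}\cdot\sigma(\mtx{X})$.

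\emph{Main obstacle.} The only step that relies on an outside fact is the Kahane--Khintchine moment comparison supporting the lower bound; all remaining ingredients are Jensen's inequality, Proposition~\ref{prop:khintchine} already in hand, and the routine balancing of the exponent $p$ against the dimension $d$.
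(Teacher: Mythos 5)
Your argument is correct and matches the paper's own proof essentially line for line: the upper bound sandwiches the spectral norm by the Schatten $2p$-norm, invokes Proposition~\ref{prop:khintchine}, bounds $\sigma_{2p}$ by $d^{1/(2p)}\sigma$, and optimizes $p\approx\log d$; the lower bound combines the Kahane--Khintchine $L^1$--$L^2$ comparison with Jensen's inequality applied to $\mtx{X}^2$ (the paper phrases Jensen via convexity of $\norm{\cdot}$ rather than $\lambda_{\max}$, but for PSD matrices these coincide). The only cosmetic difference is your $p=\max\{1,\lceil\log d\rceil\}$, which guards the $d=1$ edge case a bit more explicitly than the paper's $p=\lceil\log d\rceil$.
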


\begin{proof}[Proof Sketch]
For the upper bound, observe that
$$
\Expect \norm{\mtx{X}} 	\leq \left( \Expect \pnorm{2p}{ \mtx{X} }^{2p} \right)^{1/(2p)}
	\leq \sqrt{2p - 1} \cdot \pnorm{2p}{ \mathbf{Var}(\mtx{X})^{1/2} }
	\leq d^{1/(2p)} \sqrt{2p - 1} \cdot \norm{ \mathbf{Var}(\mtx{X}) }^{1/2}.
$$
Indeed, the spectral norm is bounded above by the Schatten $2p$-norm, and we can
apply Lyapunov's inequality to increase the order of the moment from one to $2p$.
Invoke Proposition~\ref{prop:khintchine}, and bound the trace in terms
of the spectral norm again.  Finally, set $p = \lceil \log d \rceil$, and simplify the constants.

For the lower bound, note that
$$
\Expect \norm{ \mtx{X} }
	\geq \frac{1}{\sqrt{2}} \left( \Expect \norm{ \sum\nolimits_{i=1}^n \gamma_i \mtx{H}_i }^2 \right)^{1/2}
	=\frac{1}{\sqrt{2}} \big( \Expect \norm{\smash{\mtx{X}^2}} \big)^{1/2}
	\geq \frac{1}{\sqrt{2}} \norm{ \mathbf{Var}(\mtx{X}) }^{1/2}.
$$
The first relation follows from the optimal Khintchine--Kahane
inequality~\cite{LO94:Best-Constant}; the last is Jensen's.
\end{proof}

\subsection{Two Examples}
\label{sec:intro-examples}

The bound~\eqref{eqn:khintchine-spectral} shows that
the matrix standard deviation controls the expected norm of a matrix Gaussian series up
to a factor that is logarithmic in the dimension of the random matrix.  One may
wonder whether the lower branch or the upper branch of~\eqref{eqn:khintchine-spectral} gives
the more accurate result.  In fact, natural examples demonstrate that both extremes of behavior occur.

For an integer $d \geq 1$, define
\begin{equation} \label{eqn:intro-diag}
\mtx{X}_{\rm diag} :=
\mtx{X}_{\rm diag}(d) := \begin{bmatrix} \gamma_1 \\ & \gamma_2 \\ && \gamma_3 \\ &&& \ddots \\ &&&& \gamma_d \end{bmatrix}.
\end{equation}
That is, $\mtx{X}_{\rm diag}$ is a $d \times d$ diagonal matrix whose entries
$\{ \gamma_i : 1 \leq i \leq d \}$ are independent standard normal variables.
Second, define
\begin{equation} \label{eqn:intro-goe}
\mtx{X}_{\rm goe} := \mtx{X}_{\rm goe}(d) := \frac{1}{\sqrt{2d}} (\mtx{G} + \mtx{G}^\adj)
\quad\text{where}\quad
\mtx{G} := \mtx{G}(d) := \begin{bmatrix}
	\gamma_{11} & \gamma_{12} &\dots & \gamma_{1d} \\
	\gamma_{21} & \gamma_{22} & \dots & \gamma_{2d} \\
	\vdots & \vdots & \ddots & \vdots \\
	\gamma_{d1} & \gamma_{d2} & \dots & \gamma_{dd}
\end{bmatrix}
\end{equation}
The symbol ${}^*$ denotes conjugate transposition.
Up to scaling, the $d \times d$ random matrix $\mtx{X}_{\rm goe}$ is the Hermitian part
of a matrix $\mtx{G}$ whose entries $\{ \gamma_{ij} : 1 \leq i, j \leq d \}$
are independent standard normal variables.
The sequence $\{ \mtx{X}_{\rm goe}(d) : d = 1, 2, 3, \dots \} $
is called the \term{Gaussian orthogonal ensemble} (GOE).

To apply the matrix Khintchine inequality, we represent each matrix as an
Hermitian Gaussian series:
$$
\mtx{X}_{\rm diag} = \sum\nolimits_{i=1}^d \gamma_i \, \mathbf{E}_{ii}
\quad\text{and}\quad
\mtx{X}_{\rm goe} = \frac{1}{\sqrt{2d}}
	\sum\nolimits_{1 \leq i, j \leq d} \gamma_{ij} \, (\mathbf{E}_{ij} + \mathbf{E}_{ji}).
$$
We have written $\mathbf{E}_{ij}$ for the $d \times d$ matrix with a one
in the $(i, j)$ position and zeros elsewhere.
Respectively, the matrix variances satisfy
$$
\mathbf{Var}(\mtx{X}_{\rm diag}) = \Id
\quad\text{and}\quad
\mathbf{Var}(\mtx{X}_{\rm goe}) = \big(1 + d^{-1} \big) \cdot \Id.
$$
The bound~\eqref{eqn:khintchine-spectral} delivers
\begin{equation} \label{eqn:khintchine-examples}
\frac{1}{\sqrt{2}} \quad\leq\quad \Expect \norm{\mtx{X}} \quad\lessapprox\quad \sqrt{2 \econst \log d}
\qquad\text{for $\mtx{X} = \mtx{X}_{\rm diag}$ or $\mtx{X} = \mtx{X}_{\rm goe}$.}
\end{equation}
The relations $\lessapprox$ and $\approx$ suppress lower-order terms.
In each case, the ratio between the lower and upper bound has order $\sqrt{\log d}$.
The matrix Khintchine inequality does not provide more precise information.

On the other hand, for these examples, detailed spectral information is available:
\begin{equation} \label{eqn:khintchine-examples-2}
\Expect \norm{ \smash{\mtx{X}_{\rm goe}} } \approx 2
\quad\text{and}\quad
\Expect \norm{ \smash{\mtx{X}_{\rm diag}} } \approx \sqrt{2 \log d}.
\end{equation}
See~\cite[Sec.~2.3]{Tao12:Topics-Random} for a proof of the result on the GOE matrix;
the bound for the diagonal matrix depends on the familiar calculation of the
expected maximum of $d$ independent standard normal random variables.  We see that
the norm of the GOE matrix is close to the lower bound provided
by~\eqref{eqn:khintchine-spectral}, while the norm of the diagonal
matrix is close to the upper bound.

\subsection{A Question}

Corollary~\ref{cor:khintchine-spectral} shows that the matrix variance controls the expected norm
of a matrix Gaussian series.  On the other hand, the two examples in
the previous section demonstrate that we need more information than
the variance to determine the norm up to a constant factor.  Therefore, we must ask...

\begin{quotation} \bf
Are there parameters that allow us to calculate the norm of a matrix
Gaussian series more precisely than the matrix variance?
\end{quotation}

\noindent
This paper provides the first affirmative answer to this question.

\section{Beyond the Matrix Khintchine Inequality}

This section presents new results that improve on the matrix Khintchine inequality,
Proposition~\ref{prop:khintchine}.  First, we motivate the type of parameters that
arise when we try to refine this result.  Then we define a quantity,
called the \term{matrix alignment parameter}, that describes how the coefficients
in the matrix Gaussian series interact with each other.
In Section~\ref{sec:k2-intro}, we use the alignment parameter to state a new bound that provides
a uniform improvement over the matrix Khintchine inequality.
Further refinements are possible if we consider random matrices with highly symmetric
distributions, so we introduce the class of strongly isotropic random matrices
in Section~\ref{sec:strong-isotrope}.  Section~\ref{sec:k2-strong-intro} contains a matrix
Khintchine inequality for matrix Gaussian series that are strongly isotropic.  This
bound is good enough to compute the norm of a large GOE matrix exactly.
Finally, in Sections~\ref{sec:discussion} and~\ref{sec:related}, we discuss extensions and related work.

\subsection{Prospects} \label{sec:prospects}

What kind of parameters might allow us to refine Proposition~\ref{prop:khintchine}?
The result is already an identity for $p = 1$.
For inspiration, let us work out what happens when $p = 2$:
$$
\begin{aligned}
\Expect \pnorm{4}{ \mtx{X}  }^4
	= \Expect \trace \left( \sum\nolimits_{i=1}^n \gamma_i \mtx{H}_i \right)^4
	&= \sum\nolimits_{i,j,k,\ell=1}^n \Expect[ \gamma_i \gamma_j \gamma_k \gamma_\ell ] \cdot
	\trace \big[ \mtx{H}_i \mtx{H}_j \mtx{H}_k \mtx{H}_{\ell} \big] \\
	&= 2 \trace \left( \sum\nolimits_{i,j=1}^n \mtx{H}_i^2 \right)^2 
	+ \trace \left( \sum\nolimits_{i,j=1}^n \mtx{H}_i \mtx{H}_j \mtx{H}_i \mtx{H}_j \right)
	=: 2 \trace \mathbf{Var}(\mtx{X})^2 + \trace \mtx{\Delta}.
\end{aligned}
$$
We use the convention that powers bind before the trace.
The product of Gaussian variables has expectation zero unless the indices are
paired.  In the last expression, the first term comes from the cases where $i = j$ and $k = \ell$ or where
$i = \ell$ and $j = k$; the second term comes from the case where $i = k$ and $j = \ell$.
Once again, the matrix variance $\mathbf{Var}(\mtx{X})$ emerges,
but we have a new second-order term $\mtx{\Delta}$
that arises from the summands where the indices alternate: ($i,j,i,j$).\footnote{A related observation animates the theory of free probability, which gives
a fine description of certain large random matrices. The key fact about centered, free
random variables $Y$ and $Z$ is that crossing moments, such as $\phi( YZYZ )$,
must vanish~\cite{NS06:Lectures-Combinatorics}.}

In a sense, the matrix $\mtx{\Delta}$ reflects the extent to which the coefficient matrices are aligned.
When the family $\{\mtx{H}_i\}$ commutes, the matrix $\mtx{\Delta} = \mathbf{Var}(\mtx{X})^2$,
so the second-order term provides no new information.  More generally, whenever the coefficients commute,
the quantity $(\Expect \pnorm{2p}{\mtx{X}}^{2p})^{1/(2p)}$ can be expressed in terms of the matrix variance
and the number $p$, and the matrix Khintchine inequality, Proposition~\ref{prop:khintchine},
gives an estimate of the correct order.  In other words, commuting coefficients
are the worst possible circumstance.  Most previous work on matrix concentration
implicitly uses this worst-case model in the analysis.

To achieve better results, we need to account for how the coefficient matrices $\mtx{H}_i$
interact with each other.  The calculation above suggests that the matrix $\mtx{\Delta}$
might contain the information we need.  Heuristically, when the coefficients fail to
commute, the matrix $\mtx{\Delta}$ should be small.  As we will see, this idea is fruitful,
but we need a parameter more discerning than $\mtx{\Delta}$.

Let us summarize this discussion in the following observation:

\begin{quotation} \bf
To improve on the matrix Khintchine inequality, we must quantify
the extent to which the coefficient matrices commute.
\end{quotation}

\noindent
Our work builds on this intuition to establish new matrix concentration inequalities.

\subsection{The Matrix Alignment Parameter}
\label{sec:matrix-alignment}

In this section, we introduce a new parameter for a matrix Gaussian series that
describes how much the coefficients commute with each other.  In later sections, we will
present extensions of the matrix Khintchine inequality that rely on this parameter.

\begin{definition}[Matrix Alignment Parameter]
Let $\mtx{H}_1, \dots, \mtx{H}_n$ be Hermitian matrices with dimension $d$.
For each $p \geq 1$, the \term{matrix alignment parameter} of this sequence is the quantity
\begin{equation} \label{eqn:def-matrix-alignment}
w_{p} := \max_{\mtx{Q}_{\ell}} \ \pnorm{p}{ \abs{
	\sum\nolimits_{i,j=1}^n \mtx{H}_i \mtx{Q}_1 \mtx{H}_j \mtx{Q}_2 \mtx{H}_i \mtx{Q}_3 \mtx{H}_j }^{1/4} }
	\quad\text{and}\quad
w := w_{\infty}.
\end{equation}
The maximum takes place over a triple $(\mtx{Q}_1, \mtx{Q}_2, \mtx{Q}_3)$ of unitary matrices
with dimension $d$.  The matrix absolute value is defined as $\abs{\mtx{B}} := (\mtx{B}^* \mtx{B})^{1/2}$.
\end{definition}

\noindent
Roughly, the matrix alignment parameter~\eqref{eqn:def-matrix-alignment}
describes how well the matrices $\mtx{H}_1, \dots, \mtx{H}_n$
can be aligned with each other under the worst choices of coordinates.

The quantity~\eqref{eqn:def-matrix-alignment} appears mysterious,
so it is worth a few paragraphs to clarify its meaning.  First, let us
compare the alignment parameter with the matrix standard deviation
parameter~\eqref{eqn:mtx-stdev-intro} that appears in the matrix Khintchine
inequality.

\begin{proposition}[Standard Deviation versus Alignment] \label{prop:sigma-versus-w}
Let $\mtx{H}_1, \dots, \mtx{H}_n$ be Hermitian matrices.  Define the standard deviation
and alignment parameters
$$
\sigma_{p} := \pnorm{p}{ \left( \sum\nolimits_{i=1}^n \mtx{H}_i \right)^{1/2} }
\quad\text{and}\quad
w_{p} := \max_{ \mtx{Q}_{\ell} }\ \pnorm{p}{ \abs{
	\sum\nolimits_{i,j=1}^n \mtx{H}_i \mtx{Q}_1 \mtx{H}_j \mtx{Q}_2 \mtx{H}_i \mtx{Q}_3 \mtx{H}_j }^{1/4} }
	\quad\text{for $p \geq 1$.}
$$
Then
$$
w_{p} \leq \sigma_{p}
\quad\text{for all $p \geq 4$.}
$$
\end{proposition}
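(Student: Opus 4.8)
The plan is to prove the bound one unitary triple at a time. Fix $d\times d$ unitaries $\mtx{Q}_1,\mtx{Q}_2,\mtx{Q}_3$, write $\mtx{M} := \sum\nolimits_{i,j=1}^n \mtx{H}_i\mtx{Q}_1\mtx{H}_j\mtx{Q}_2\mtx{H}_i\mtx{Q}_3\mtx{H}_j$ and $\mtx{S} := \sum\nolimits_{i=1}^n\mtx{H}_i^2$, and set $q := p/4$, so that $q\ge 1$ exactly because $p\ge 4$. Since $\pnorm{p}{\abs{\mtx{M}}^{1/4}}=\pnorm{q}{\mtx{M}}^{1/4}$ and $\pnorm{q}{\mtx{S}^2}^{1/4}=\pnorm{p}{\mtx{S}^{1/2}}=\sigma_p$, it is enough to establish the Schatten-norm estimate $\pnorm{q}{\mtx{M}}\le\pnorm{q}{\mtx{S}^2}$ for every choice of the unitaries; taking fourth roots and maximizing over them then gives $w_p\le\sigma_p$. (The threshold $q\ge1$, i.e.\ $p\ge4$, is exactly what the final majorization step will require.)

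First I would split $\mtx{M}$ with a Cauchy--Schwarz factorization. Write $\mtx{M}=\mtx{L}\mtx{R}$, where $\mtx{L}$ is the $d\times n^2d$ block-row matrix with $(i,j)$ block $\mtx{H}_i\mtx{Q}_1\mtx{H}_j\mtx{Q}_2$ and $\mtx{R}$ is the $n^2d\times d$ block-column matrix with $(i,j)$ block $\mtx{H}_i\mtx{Q}_3\mtx{H}_j$ (ordering the pairs the same way). Schatten--H\"older gives $\pnorm{q}{\mtx{M}}\le\pnorm{2q}{\mtx{L}}\pnorm{2q}{\mtx{R}}$, and a one-line computation using $\mtx{H}_i^{\adj}=\mtx{H}_i$ yields $\mtx{L}\mtx{L}^{\adj}=\Psi(\mtx{Q}_1\mtx{S}\mtx{Q}_1^{\adj})$ and $\mtx{R}^{\adj}\mtx{R}=\Psi(\mtx{Q}_3^{\adj}\mtx{S}\mtx{Q}_3)$, where $\Psi(\mtx{Y}):=\sum\nolimits_i\mtx{H}_i\mtx{Y}\mtx{H}_i$ is the positive, self-adjoint linear map with $\Psi(\Id)=\mtx{S}$. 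Because $\pnorm{2q}{\mtx{L}}^2=\pnorm{q}{\mtx{L}\mtx{L}^{\adj}}$ and similarly for $\mtx{R}$, the proposition reduces to the single inequality
\[
\pnorm{q}{\Psi(\mtx{Q}\mtx{S}\mtx{Q}^{\adj})}\;\le\;\pnorm{q}{\mtx{S}^2}\qquad\text{for every unitary }\mtx{Q},
\]
which I will call $(\star)$ and which I expect to be the main obstacle.

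For $(\star)$ the naive bound $\Psi(\mtx{Q}\mtx{S}\mtx{Q}^{\adj})\psdle\norm{\mtx{S}}\,\Psi(\Id)=\norm{\mtx{S}}\,\mtx{S}$ is too crude: it gives only $\pnorm{q}{\Psi(\mtx{Q}\mtx{S}\mtx{Q}^{\adj})}\le\norm{\mtx{S}}\pnorm{q}{\mtx{S}}$, which can strictly exceed $\pnorm{q}{\mtx{S}^2}$. So I would instead use the full eigenvalue list $\mu_1\ge\cdots\ge\mu_d\ge0$ of $\mtx{S}$ (which $\mtx{Q}\mtx{S}\mtx{Q}^{\adj}$ also carries) and prove the stronger assertion that the eigenvalues of $\Psi(\mtx{Q}\mtx{S}\mtx{Q}^{\adj})$ are weakly majorized by those of $\mtx{S}^2$ --- this yields $(\star)$ for every unitarily invariant norm, and it is here that $q\ge1$ enters. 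By Ky Fan's maximum principle it suffices to show, for every rank-$m$ orthogonal projection $\mtx{P}$,
\[
\trace\!\bigl(\mtx{P}\,\Psi(\mtx{Q}\mtx{S}\mtx{Q}^{\adj})\bigr)=\trace\!\bigl(\Psi(\mtx{P})\,\mtx{Q}\mtx{S}\mtx{Q}^{\adj}\bigr)\;\le\;\sum\nolimits_{k\le m}\mu_k^2,
\]
the identity using self-adjointness of $\Psi$ and $\Psi(\Id)=\mtx{S}$. Put $\mtx{A}:=\Psi(\mtx{P})$. Positivity of $\Psi$ and $\zeromtx\psdle\mtx{P}\psdle\Id$ force $\zeromtx\psdle\mtx{A}\psdle\mtx{S}$, so the decreasingly ordered eigenvalues obey $\lambda_k(\mtx{A})\le\mu_k$; and $\trace\mtx{A}=\trace(\mtx{S}\mtx{P})\le\sum_{k\le m}\mu_k$ by Ky Fan applied to $\mtx{S}$. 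Von Neumann's trace inequality gives $\trace(\mtx{A}\,\mtx{Q}\mtx{S}\mtx{Q}^{\adj})\le\sum_k\lambda_k(\mtx{A})\mu_k$, and then the elementary fact that reals $(\rho_k)$ with $0\le\rho_k\le\mu_k$ and $\sum_k\rho_k\le\sum_{k\le m}\mu_k$ necessarily satisfy $\sum_k\rho_k\mu_k\le\sum_{k\le m}\mu_k^2$ (the difference of the two sides is at most $\mu_m\bigl(\sum_k\rho_k-\sum_{k\le m}\mu_k\bigr)\le0$, after splitting the range at $k=m$ and using monotonicity of $(\mu_k)$) completes $(\star)$ with $\rho_k=\lambda_k(\mtx{A})$.

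Putting the pieces together, $\pnorm{q}{\mtx{M}}\le\pnorm{q}{\mtx{S}^2}^{1/2}\pnorm{q}{\mtx{S}^2}^{1/2}=\pnorm{q}{\mtx{S}^2}$ for every unitary triple, whence $w_p\le\sigma_p$ after taking fourth roots and maximizing. The real content sits in $(\star)$: the key realization is that it must be phrased as an eigenvalue-majorization statement, and the usable leverage is the combination ``$\Psi(\mtx{P})\psdle\mtx{S}$'' together with ``$\trace\Psi(\mtx{P})$ is small'', rather than any estimate that sees only $\norm{\mtx{S}}$.
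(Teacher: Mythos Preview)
Your proof is correct. The first stage—factoring $\mtx{M}=\mtx{L}\mtx{R}$ via block matrices and applying Schatten Cauchy--Schwarz to reduce to $(\star)$—is exactly the paper's opening move (the paper writes it as $\pnorm{q}{\mtx{A}^{\adj}\mtx{B}}^{2}\le\pnorm{q}{\mtx{A}^{\adj}\mtx{A}}\pnorm{q}{\mtx{B}^{\adj}\mtx{B}}$, but the content is identical).

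The proofs diverge at $(\star)$. The paper dispatches $\pnorm{q}{\Psi(\mtx{Q}\mtx{S}\mtx{Q}^{\adj})}\le\pnorm{q}{\mtx{S}^{2}}$ by invoking the Lust--Piquard inequality (Fact~\ref{fact:pisier-xu}), which in Schatten form reads $\pnorm{q}{\Psi(\mtx{B})}\le\pnorm{2q}{\mtx{S}}\,\pnorm{2q}{\mtx{B}}$ for positive-semidefinite $\mtx{B}$ and $q\ge 1$; with $\mtx{B}=\mtx{Q}\mtx{S}\mtx{Q}^{\adj}$ this gives $(\star)$ immediately. That inequality is itself proved via the Hadamard Three-Lines Theorem. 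Your route is different and more elementary: you establish weak majorization of the eigenvalues of $\Psi(\mtx{Q}\mtx{S}\mtx{Q}^{\adj})$ by those of $\mtx{S}^{2}$ using only Ky~Fan, von~Neumann's trace inequality, and the scalar lemma at the end. This avoids complex interpolation entirely and actually yields $(\star)$ for every unitarily invariant norm, not just Schatten $q$-norms with $q\ge 1$. The trade-off is that the paper's proof is a one-line citation once Lust--Piquard is on the table, whereas yours has to build the majorization from scratch; but your argument is self-contained and exposes precisely why the naive bound $\Psi(\mtx{Q}\mtx{S}\mtx{Q}^{\adj})\psdle\norm{\mtx{S}}\,\mtx{S}$ is wasteful.
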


\noindent
The proof of Proposition~\ref{prop:sigma-versus-w} appears in Section~\ref{sec:interleave-stdev}.

Next, let us return to the examples in the introduction.
In Section~\ref{sec:compute-interleave}, we provide detailed calculations of the 
standard deviation and alignment parameters.
For the diagonal Gaussian series $\mtx{X}_{\rm diag}$ defined in~\eqref{eqn:intro-diag}, we have
$$
\sigma(\mtx{X}_{\rm diag}) = 1
\quad\text{and}\quad
w(\mtx{X}_{\rm diag}) = 1.
$$
For the GOE matrix $\mtx{X}_{\rm goe}$ defined in~\eqref{eqn:intro-goe},
\begin{equation} \label{eqn:goe-stats-intro}
\sigma(\mtx{X}_{\rm goe}) = 1 + d^{-1}
\quad\text{and}\quad
w(\mtx{X}_{\rm goe}) \leq (4d)^{-1/4}.
\end{equation}
The matrix alignment parameter can tell the two examples apart, while
the matrix standard deviation cannot!

\begin{remark}[Notation for Alignment]
Here and elsewhere, we abuse notation by writing $w_p(\mtx{X})$ and $w(\mtx{X})$ for the alignment parameter
of a matrix Gaussian series $\mtx{X} := \sum\nolimits_{i=1}^n \gamma_i \mtx{H}_i$,
even though $w$ is a function of the coefficient matrices $\mtx{H}_i$ in the representation
of the series.
\end{remark}

\begin{remark}[Are the Unitaries Necessary?]
At this stage, it may seem capricious to include the unitary matrices
in the definition~\eqref{eqn:def-matrix-alignment}.  In fact, the example in
Section~\ref{sec:spin} demonstrates that the alignment parameter would
lose its value if we were to remove the unitary matrices.  On the other
hand, there are situations where the unitary matrices are not completely
arbitrary, as discussed in Section~\ref{sec:k2-discussion}.
\end{remark}

\subsection{A Second-Order Matrix Khintchine Inequality}
\label{sec:k2-intro}

The first major result of this paper is an improvement on the matrix Khintchine inequality.
This theorem uses the second-order information in the alignment parameter
to obtain better bounds. 
\begin{theorem}[Second-Order Matrix Khintchine] \label{thm:k2-intro}
Consider an Hermitian matrix Gaussian series $\mtx{X} := \sum\nolimits_{i=1}^n \gamma_i \mtx{H}_i$,
as in~\eqref{eqn:matrix-gauss-series}.
Define the matrix standard deviation and matrix alignment parameters
$$
\sigma_{q}(\mtx{X}) := \pnorm{q}{ \left( \sum\nolimits_{i=1}^n \mtx{H}_i^2 \right)^{1/2} }
\quad\text{and}\quad
w_{q}(\mtx{X}) := \max_{\mtx{Q}_{\ell}} \ \pnorm{q}{ \abs{ \sum\nolimits_{i,j=1}^n
	\mtx{H}_i \mtx{Q}_1 \mtx{H}_j \mtx{Q}_2 \mtx{H}_i \mtx{Q}_3 \mtx{H}_j }^{1/4} }
	\quad\text{for $q \geq 1$.}
$$
The maximum takes place over a triple $(\mtx{Q}_1, \mtx{Q}_2, \mtx{Q}_3)$ of unitary matrices.
Then, for each integer $p \geq 3$,
\begin{equation} \label{eqn:k2-intro}
\big( \Expect \pnorm{2p}{\mtx{X}}^{2p} \big)^{1/(2p)}
	\quad\leq\quad 3 \sqrt[4]{2p - 5} \cdot \sigma_{2p}(\mtx{X})
	\ +\ \sqrt{2p - 4} \cdot w_{2p}(\mtx{X}).
\end{equation}
The symbol $\pnorm{q}{\cdot}$ denotes the Schatten $q$-norm.
\end{theorem}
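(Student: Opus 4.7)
The starting point is the identity $\Expect \pnorm{2p}{\mtx{X}}^{2p} = \Expect \trace \mtx{X}^{2p}$ together with the Gaussian integration-by-parts rule $\Expect[\gamma_i F(\gamma)] = \Expect[\partial_{\gamma_i} F(\gamma)]$. Applied once, these yield the familiar expansion
$$
\Expect \trace \mtx{X}^{2p} = \sum_{k=0}^{2p-2} \sum_{i=1}^n \Expect \trace\bigl( \mtx{H}_i \mtx{X}^k \mtx{H}_i \mtx{X}^{2p-2-k} \bigr),
$$
which is the launching pad for the first-order matrix Khintchine inequality. The plan is to apply integration by parts a \emph{second} time to each summand, writing another $\mtx{X} = \sum_j \gamma_j \mtx{H}_j$ and differentiating, thereby exposing a cyclic product of four Hermitian factors $\mtx{H}_i, \mtx{H}_j, \mtx{H}_i, \mtx{H}_j$ separated by powers of $\mtx{X}$ whose total degree is $2p-4$.

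After the second integration by parts, every term is indexed by one of the three matchings of four points on a circle: the two non-crossing matchings $(i,i)(j,j)$ and $(i,j)(j,i)$, and the crossing matching $(i,j)(i,j)$, as previewed in Section~\ref{sec:prospects}. The non-crossing matchings aggregate into expressions ultimately controlled by the matrix variance $\sum_i \mtx{H}_i^2$, while the crossing matching produces exactly the alternating sum
$$
\sum_{i,j=1}^n \mtx{H}_i \mtx{X}^{a_1} \mtx{H}_j \mtx{X}^{a_2} \mtx{H}_i \mtx{X}^{a_3} \mtx{H}_j \mtx{X}^{a_4}
$$
whose Schatten norm is precisely what the alignment parameter $w_{q}(\mtx{X})$ was crafted to control.

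To convert these matrix expressions into a scalar bound, I would handle the two classes separately. For the crossing contributions, I would replace each intervening power $\mtx{X}^{a_\ell}$ by its polar decomposition $\mtx{X}^{a_\ell} = \mtx{Q}_\ell \vert \mtx{X}^{a_\ell} \vert$, slide the three unitaries $\mtx{Q}_1, \mtx{Q}_2, \mtx{Q}_3$ inside the maximum defining $w_{2p}(\mtx{X})$, and dispose of the four absolute-value factors by a non-commutative Hölder inequality whose exponents are tuned so that the product of the $\vert \mtx{X}^{a_\ell} \vert$ contributes the weight $\pnorm{2p}{\mtx{X}}^{2p-4}$. For the non-crossing contributions, a single application of Schatten Hölder directly at the level of the trace (treating the paired $\mtx{H}_j$-factors jointly, not locally) pulls out a factor $\pnorm{p}{\sum_i \mtx{H}_i^2} = \sigma_{2p}^4$, again against the same weight. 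Taking expectations and applying Lyapunov's inequality to equalize the order of the moment produces a self-referential bound of the shape
$$
\Expect \pnorm{2p}{\mtx{X}}^{2p} \;\leq\; \bigl( C_1 (2p-5)\, \sigma_{2p}^4 + C_2 (2p-4)^2\, w_{2p}^4 \bigr) \cdot \bigl( \Expect \pnorm{2p}{\mtx{X}}^{2p} \bigr)^{(2p-4)/(2p)}.
$$
Setting $M := \bigl(\Expect \pnorm{2p}{\mtx{X}}^{2p}\bigr)^{1/(2p)}$ and dividing gives $M^4 \leq C_1 (2p-5)\, \sigma_{2p}^4 + C_2 (2p-4)^2\, w_{2p}^4$, and subadditivity of the fourth root delivers the advertised bound once bookkeeping of the absolute constants fixes $C_1^{1/4} = 3$ and $C_2^{1/4} = 1$. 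The hypothesis $p \geq 3$ reflects exactly the need for $2p-4 \geq 2$ powers of $\mtx{X}$ to survive the two integration-by-parts steps.

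The main obstacle I anticipate is the bookkeeping in the non-crossing class. The naive collapse $\sum_j \mtx{H}_j \mtx{X}^{a} \mtx{H}_j \stackrel{?}{=} \bigl(\sum_j \mtx{H}_j^2\bigr) \mtx{X}^a$ fails whenever $a\geq 1$, because $\mtx{H}_j$ and $\mtx{X}$ do not commute, so one cannot extract the matrix variance pointwise. One must therefore apply Schatten Hölder globally, carefully grouping the two paired $\mtx{H}_j$'s with two suitable factors of $\mtx{X}$ to expose $\pnorm{p}{\sum_i \mtx{H}_i^2}$ without incurring additional $(2p-1)$-type factors. Tracking the multiple nested sums over cyclic positions, demonstrating that the aggregate combinatorial coefficient is bounded by $C_1(2p-5)$ on the non-crossing side and by $C_2(2p-4)^2$ on the crossing side, and verifying that the polar-decomposition step produces exactly the unitary triple that appears in the definition of $w_{2p}$ together constitute the technically delicate portion of the argument.
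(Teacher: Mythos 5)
Your high-level classification of the second-integration-by-parts terms into non-crossing (paired, nested) and crossing index patterns is correct and does match the structure of the paper's argument. But your plan to apply Gaussian integration by parts a second time to \emph{every} summand of $\sum_{q=0}^{2p-2}\sum_i \Expect\trace[\mtx{H}_i\mtx{X}^q\mtx{H}_i\mtx{X}^{2p-2-q}]$ has a quantitative flaw that the paper specifically avoids, and which would prevent you from reaching the exponent $\sqrt[4]{2p-5}$ on $\sigma_{2p}$.

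Count the terms. After the second integration by parts, for $q=1,\dots,2p-3$ the derivative of $\mtx{X}^{q-1}$ produces $q-1$ nested terms $(i,j,j,i)$ and the derivative of $\mtx{X}^{2p-2-q}$ produces $2p-2-q$ crossing terms $(i,j,i,j)$; summing over $q$ gives roughly $2p^2$ non-crossing terms and $2p^2$ crossing terms. If each non-crossing term individually yields $\sigma_{2p}^4 \cdot E^{2(p-2)}$ after Schatten--H\"older (and it cannot do better without collapsing the intervening powers of $\mtx{X}$ first), then the quadratic inequality becomes $E^4 \lesssim p^2\sigma_{2p}^4 + p^2 w_{2p}^4$, which solves to $E \lesssim \sqrt{p}\cdot\sigma_{2p} + \sqrt{p}\cdot w_{2p}$. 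The $\sigma$-coefficient $\sqrt{p}$ is no improvement over the first-order Khintchine inequality. Your ansatz $M^4 \leq C_1(2p-5)\sigma_{2p}^4 + C_2(2p-4)^2 w_{2p}^4$ presupposes that the non-crossing class has only $O(p)$ members, which is not what two rounds of integration by parts produce.

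The missing ingredient is the matrix Heinz-type inequality used in the paper (its Proposition~\ref{prop:matrix-heinz}), applied \emph{between} the two rounds of integration by parts. This scalar-to-matrix convexity inequality allows one to replace $\trace[\mtx{H}_i\mtx{X}^q\mtx{H}_i\mtx{X}^{2p-2-q}]$ by $\trace[\mtx{H}_i\abs{\mtx{X}}^s\mtx{H}_i\abs{\mtx{X}}^{2p-2-s}]$ for any $s\leq\min\{q,2p-2-q\}$. The paper exploits this surgically: four boundary exponents are sent to $s=0$ (each giving $\sigma_{2p}^2 E^{2(p-1)}$ via the ordinary Khintchine argument), and the remaining $2p-5$ exponents are all normalized to $s=1$, i.e.\ to the single form $\trace[\mtx{H}_i\mtx{X}^2\mtx{H}_i\mtx{X}^{2p-4}]$. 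Only after this normalization is the second integration by parts applied, and then the non-crossing class has exactly $2p-5$ members while the crossing class has $(2p-5)(2p-4)$. That is the source of the asymmetric coefficients. The Heinz step also resolves a second difficulty you flag: the intervening powers $\mtx{X}^r$ between paired $\mtx{H}_j$-factors are Hermitian but not positive semidefinite, so Lust--Piquard cannot be applied directly; normalizing first to $\abs{\mtx{X}}^s$ (or keeping the exponent $s=1$ so that only $\mtx{H}_j^2$ sits between the $\mtx{H}_i$'s) is what makes the Lust--Piquard/Schatten--H\"older step go through.

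Finally, for the crossing terms, your plan to take polar factorizations $\mtx{X}^{a_\ell}=\mtx{Q}_\ell\abs{\mtx{X}^{a_\ell}}$ and then H\"older away the absolute values with weight $\pnorm{2p}{\mtx{X}}^{2p-4}$ is in the right spirit but is not quite rigorous: the four moduli have different exponents $a_\ell$ summing to $2p-4$, and a non-commutative four-term H\"older over a trace does not straightforwardly reassemble $\abs{\mtx{X}^{a_1}}\cdots\abs{\mtx{X}^{a_4}}$ into $\trace\mtx{X}^{2p}$. The paper circumvents this by proving a genuine multivariate complex-interpolation lemma (via the Hadamard three-lines theorem) that consolidates the powers into a single $\mtx{X}^{2p-4}$ before H\"older is applied. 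So the proposal is close in outline, but two load-bearing technical devices --- the Heinz inequality to control exponent proliferation, and the complex-interpolation lemma to consolidate the intervening powers --- are absent, and without the former the bound on the $\sigma_{2p}$ term genuinely degrades.
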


\noindent
The proof of Theorem~\ref{thm:k2-intro} appears in Section~\ref{sec:k2}.

We can also derive bounds for the spectral norm of a matrix
Gaussian series.

\begin{corollary}[Second-Order Matrix Khintchine: Spectral Norm]
Consider an Hermitian matrix Gaussian series $\mtx{X} := \sum\nolimits_{i=1}^n \gamma_i \mtx{H}_i$
with dimension $d \geq 8$, as in~\eqref{eqn:matrix-gauss-series}.
Define the matrix standard deviation and matrix alignment parameters
$$
\sigma(\mtx{X}) := \norm{ \sum\nolimits_{i=1}^n \mtx{H}_i^2 }^{1/2}
\quad\text{and}\quad
w(\mtx{X}) := \max_{\mtx{Q}_{\ell}} \ \norm{ \sum\nolimits_{i,j=1}^n
	\mtx{H}_i \mtx{Q}_1 \mtx{H}_j \mtx{Q}_2 \mtx{H}_i \mtx{Q}_3 \mtx{H}_j }^{1/4}.
$$
The maximum ranges over a triple $(\mtx{Q}_1, \mtx{Q}_2, \mtx{Q}_3)$ of unitary matrices.
Then
\begin{equation} \label{eqn:k2-spectral-bd}
\Expect \norm{\mtx{X}}
	\quad\leq\quad 3\sigma(\mtx{X}) \sqrt[4]{2\econst \log d}
	\ +\ w(\mtx{X}) \sqrt{2\econst \log d}.
\end{equation}
The symbol $\norm{\cdot}$ denotes the spectral norm.
\end{corollary}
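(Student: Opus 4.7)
The plan is to follow the same template that yields Corollary~\ref{cor:khintchine-spectral} from Proposition~\ref{prop:khintchine}: pass from the spectral norm to the Schatten $2p$-norm, apply the moment bound, trade the Schatten norms back for spectral norms at a cost of a factor $d^{1/(2p)}$, and finally optimize in $p$. The one subtlety is the appearance of the exponent $1/4$ in the alignment parameter, which must be handled carefully when we compare Schatten $2p$-norms to spectral norms.

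First, I would observe that $\norm{\mtx{X}} \le \pnorm{2p}{\mtx{X}}$ for every integer $p \ge 1$, so Lyapunov's inequality gives
\[
\Expect\norm{\mtx{X}} \le \bigl(\Expect \pnorm{2p}{\mtx{X}}^{2p}\bigr)^{1/(2p)}.
\]
Applying Theorem~\ref{thm:k2-intro} to the right-hand side, for every integer $p\ge 3$ we obtain
\[
\Expect\norm{\mtx{X}} \le 3\sqrt[4]{2p-5}\cdot\sigma_{2p}(\mtx{X}) + \sqrt{2p-4}\cdot w_{2p}(\mtx{X}).
\]

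Next, I would convert each Schatten norm into its spectral counterpart. Writing $\mtx{A} := \mathbf{Var}(\mtx{X})^{1/2}$, the inequality $\pnorm{2p}{\mtx{A}} = (\trace \mtx{A}^{2p})^{1/(2p)} \le d^{1/(2p)} \norm{\mtx{A}}$ yields $\sigma_{2p}(\mtx{X}) \le d^{1/(2p)}\,\sigma(\mtx{X})$. For the alignment term I would argue analogously: for any fixed unitaries $(\mtx{Q}_1,\mtx{Q}_2,\mtx{Q}_3)$, let $\mtx{B}$ be the corresponding alternating product; then $\abs{\mtx{B}}^{1/4}$ is positive semidefinite, so
\[
\pnorm{2p}{\,\abs{\mtx{B}}^{1/4}\,} \le d^{1/(2p)}\,\norm{\,\abs{\mtx{B}}^{1/4}\,} = d^{1/(2p)}\,\norm{\mtx{B}}^{1/4}.
\]
Taking the maximum over unitaries gives $w_{2p}(\mtx{X}) \le d^{1/(2p)}\, w(\mtx{X})$. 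Combining the two estimates,
\[
\Expect\norm{\mtx{X}} \le d^{1/(2p)}\Bigl[\,3\sqrt[4]{2p-5}\cdot\sigma(\mtx{X}) + \sqrt{2p-4}\cdot w(\mtx{X})\,\Bigr].
\]

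Finally, I would set $p := \lceil \log d\rceil$; the hypothesis $d\ge 8$ ensures $p\ge 3$ so the theorem applies, and $d^{1/(2p)}\le \sqrt{\econst}$. Moreover, $2p - 5 \le 2\log d$ and $2p - 4 \le 2\log d$ for $d \ge 8$, so
\[
d^{1/(2p)}\sqrt[4]{2p-5} \le \sqrt[4]{2\econst\log d} \quad\text{and}\quad d^{1/(2p)}\sqrt{2p-4} \le \sqrt{2\econst\log d}.
\]
Substituting these into the displayed bound yields~\eqref{eqn:k2-spectral-bd}. The only mildly delicate point is the Schatten-to-spectral reduction for the alignment parameter, which is why one works with $\abs{\mtx{B}}^{1/4}$ as a positive semidefinite matrix rather than trying to manipulate $\mtx{B}$ directly; once this is done, the rest is the same bookkeeping that produced Corollary~\ref{cor:khintchine-spectral}.
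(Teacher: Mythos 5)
Your plan mirrors the paper's own terse proof (``the result follows from Theorem~\ref{thm:k2-intro} by setting $p = \lceil\log d\rceil$''), and the reduction of $\sigma_{2p}$ and $w_{2p}$ to $\sigma$ and $w$ via $d^{1/(2p)}$ is correct, including the careful handling of $\abs{\mtx{B}}^{1/4}$. The problem lies in the last arithmetic step: the claimed inequality $d^{1/(2p)}\sqrt[4]{2p-5} \le \sqrt[4]{2\econst\log d}$ is false for most $d \ge 8$. Raised to the fourth power it reads $d^{2/p}(2p-5) \le 2\econst\log d$; but with $p = \lceil\log d\rceil$ one has $d^{2/p} = (d^{1/p})^2 \le \econst^2$ (comparable to $\econst^2$, not $\econst$), so the left side is of order $2\econst^2\log d$. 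Concretely, at $\log d = 10$ (so $p = 10$) one has $d^{1/(2p)}\sqrt[4]{2p-5} = \sqrt{\econst}\cdot 15^{1/4} \approx 3.24 > \sqrt[4]{20\econst} \approx 2.72$. You have effectively multiplied the two estimates $d^{1/(2p)} \le \sqrt{\econst}$ and $\sqrt[4]{2p-5} \le \sqrt[4]{2\log d}$, which yields $\sqrt[4]{2\econst^2\log d}$, off by a factor of $\econst^{1/4}$ from the target. (Your treatment of the second term is fine, since there the $d^{1/(2p)}$ factor appears under a square root, not a fourth root: $d^{1/p}(2p-4) \le \econst\cdot 2\log d$.)

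The corollary as stated is nevertheless true, but to prove it one cannot use the simplified statement of Theorem~\ref{thm:k2-intro}; one must instead return to the sharper intermediate bound in its proof (Section~\ref{sec:k2}), namely
$\big(\Expect\trace\mtx{X}^{2p}\big)^{1/(2p)} \le \big(2 + \sqrt[4]{2p-5}\big)\sigma_{2p} + \sqrt[4]{(2p-5)(2p-4)}\cdot w_{2p}$,
before the simplification $2 + \sqrt[4]{2p-5} \le 3\sqrt[4]{2p-5}$ was made. Multiplying by $d^{1/(2p)} \le \sqrt{\econst}$ and using $2p - 5 \le 2\log d - 3$ gives the $\sigma$-coefficient $2\sqrt{\econst} + \sqrt{\econst}\,(2\log d - 3)^{1/4}$, and one then checks directly that this is dominated by $3\sqrt[4]{2\econst\log d} = 3\econst^{1/4}(2\log d)^{1/4}$ for all $d \ge 8$: the additive constant $2\sqrt{\econst}$ is absorbed by the slack in $\sqrt{\econst} < 3\econst^{1/4}$. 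Your write-up should either carry out this more careful bookkeeping or weaken the $\sigma$-coefficient in the stated conclusion to $3\sqrt[4]{2\econst^2\log d}$.
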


\noindent
The result follows from Theorem~\ref{thm:k2-intro} by setting $p = \lceil \log d \rceil$.
The potential gain in~\eqref{eqn:k2-spectral-bd} over~\eqref{eqn:khintchine-spectral}
comes from the reduction of the power on the first logarithm
from one-half to one-quarter.

\subsection{Matrix Khintchine versus Second-Order Matrix Khintchine}

Let us make some comparisons between Proposition~\ref{prop:khintchine} and
Theorem~\ref{thm:k2-intro}.  First, recall that the alignment parameter is
dominated by the standard deviation parameter: $w_q(\mtx{X}) \leq \sigma_q(\mtx{X})$ for $q \geq 4$
because of Proposition~\ref{prop:sigma-versus-w}.
Therefore, the bound~\eqref{eqn:k2-intro} implies that
$$
\big( \Expect \pnorm{2p}{\mtx{X}}^{2p} \big)^{1/(2p)}
	\leq \big(3 \sqrt[4]{2p-1} + \sqrt{2p-1} \big) \cdot \sigma_{2p}(\mtx{X})
	\quad\text{for $p = 3, 4, 5, \dots$.}
$$
This is very close to the prediction from Proposition~\ref{prop:khintchine},
so Theorem~\ref{thm:k2-intro} is never significantly worse.

On the other hand, there are situations where Theorem~\ref{thm:k2-intro}
gives qualitatively better results.  In particular, for the GOE matrix
$\mtx{X}_{\rm goe}(d)$, the bound~\eqref{eqn:k2-intro} and the
calculation~\eqref{eqn:goe-stats-intro} yield
$$
\Expect \norm{ \smash{\mtx{X}_{\rm goe}(d)} }
	\leq 3 \sqrt[4]{2 \econst \log d} + \frac{\sqrt{2 \econst \log d}}{\sqrt[4]{4d}}
	\quad\text{for $d \geq 8$.}
$$
This estimate beats our first attempt in~\eqref{eqn:khintchine-examples},
but it still falls short of the correct estimate $\Expect \norm{\smash{\mtx{X}_{\rm goe}}} \approx 2$.

\subsection{Strongly Isotropic Random Matrices}
\label{sec:strong-isotrope}

As we have seen, Theorem~\ref{thm:k2-intro} offers a qualitative improvement
over the matrix Khintchine inequality, Proposition~\ref{prop:khintchine}.
Nevertheless, the new result still lacks the power to determine the norm
of the GOE matrix correctly.  We can obtain more satisfactory results by specializing
our attention to a class of random matrices with highly symmetric distributions.

\begin{definition}[Strong Isotropy]
Let $\mtx{X}$ be a random Hermitian matrix.
We say that $\mtx{X}$ is \term{strongly isotropic}
when
$$
\Expect \mtx{X}^p
= \big( \Expect \ntr \mtx{X}^p \big) \cdot \Id
\quad\text{for $p = 0, 1, 2, \dots$.}
$$
The symbol $\ntr$ denotes the normalized trace: $\ntr \mtx{A} := d^{-1} \trace \mtx{A}$
when $\mtx{A}$ has dimension $d$.
\end{definition}

The easiest way to check that a random matrix is strongly isotropic
is to exploit symmetry properties of the distribution.  We offer one
of many possible results in this direction~\cite[Lem.~7.1]{CT14:Subadditivity-Matrix}.

\begin{proposition}[Strong Isotropy: Sufficient Condition] \label{prop:signed-perm}
Let $\mtx{X}$ be a random Hermitian matrix.
Suppose that the distribution of $\mtx{X}$ is invariant under signed permutation:
$$
\mtx{X} \sim \mtx{\Pi}^\adj \mtx{X} \mtx{\Pi}
\quad\text{for every signed permutation $\mtx{\Pi}$.}
$$
Then $\mtx{X}$ is strongly isotropic.
The symbol $\sim$ refers to equality of distribution.
A signed permutation is a square matrix that has precisely one nonzero entry in each row
or column, this entry taking the values $\pm 1$.
\end{proposition}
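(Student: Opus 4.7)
The plan is to show that $\mtx{M} := \Expect \mtx{X}^p$ is itself invariant under conjugation by every signed permutation, and then deduce that any such invariant matrix must be a scalar multiple of the identity with scalar equal to its normalized trace.

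First I would verify the invariance of $\mtx{M}$. For any signed permutation $\mtx{\Pi}$, the map $\mtx{Y} \mapsto \mtx{\Pi}^\adj \mtx{Y} \mtx{\Pi}$ is a ring homomorphism on $d \times d$ matrices (since $\mtx{\Pi}$ is unitary, $\mtx{\Pi} \mtx{\Pi}^\adj = \Id$), so
$$
\mtx{\Pi}^\adj \mtx{X}^p \mtx{\Pi} = (\mtx{\Pi}^\adj \mtx{X} \mtx{\Pi})^p \sim \mtx{X}^p.
$$
Taking expectations (and using that the sign flips do not affect integrability of moments) yields $\mtx{\Pi}^\adj \mtx{M} \mtx{\Pi} = \mtx{M}$ for every signed permutation.

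Next I would argue that the only matrices fixed by all such conjugations are the scalar multiples of $\Id$. I carry this out in two steps. Let $\mtx{\Pi}_i$ be the diagonal signed permutation with a $-1$ in slot $i$ and $+1$ elsewhere. Conjugation by $\mtx{\Pi}_i$ multiplies the entry $M_{jk}$ by $(\mtx{\Pi}_i)_{jj} (\mtx{\Pi}_i)_{kk}$, which equals $-1$ whenever exactly one of $j, k$ equals $i$. Invariance therefore forces $M_{jk} = 0$ for every $j \neq k$, so $\mtx{M}$ is diagonal. For the second step, let $\mtx{\Pi}_{ij}$ be the (unsigned) transposition swapping $i$ and $j$. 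Conjugation by $\mtx{\Pi}_{ij}$ swaps $M_{ii}$ with $M_{jj}$, so invariance forces all diagonal entries to be equal. Together these show $\mtx{M} = c \Id$ for some scalar $c \in \C$.

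Finally, taking the normalized trace of both sides,
$$
c = \ntr(c \Id) = \ntr \mtx{M} = \ntr \Expect \mtx{X}^p = \Expect \ntr \mtx{X}^p,
$$
which delivers the desired identity $\Expect \mtx{X}^p = (\Expect \ntr \mtx{X}^p) \cdot \Id$ for every $p = 0, 1, 2, \dots$. There is no real obstacle here; the only subtlety is the standard observation that the two families of signed permutations (diagonal $\pm 1$ matrices and transpositions) already generate enough constraints to kill the off-diagonal part and equalize the diagonal part, which is precisely what signed-permutation invariance provides.
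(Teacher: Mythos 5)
Your proof is correct. It takes a slightly different route from the paper's, and the difference is worth noting. The paper draws a \emph{random uniform} signed permutation $\mtx{\Pi}$, independent of $\mtx{X}$, and averages $\mtx{\Pi}^\adj \mtx{X}^p \mtx{\Pi}$ over $\mtx{\Pi}$, invoking the fact that averaging conjugation by a uniform signed permutation projects any fixed matrix onto $(\ntr \mtx{A}) \cdot \Id$. You instead work with a \emph{fixed} arbitrary signed permutation, deduce that $\mtx{M} := \Expect \mtx{X}^p$ lies in the commutant of the signed permutation group, and then prove by hand that this commutant is trivial: diagonal $\pm 1$ matrices kill the off-diagonal entries, and transpositions equalize the diagonal ones. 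These are two formulations of the same Schur-lemma-type fact (the standard representation of the hyperoctahedral group on $\C^d$ is irreducible). The paper's averaging trick is slicker and computes the scalar in one stroke; your version is more self-contained, since it proves the commutant fact from scratch by exhibiting explicit group elements, and it identifies the scalar at the end by taking a trace. Both are complete and correct.

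One cosmetic remark: your step-2a argument as phrased refers to entries ``whenever exactly one of $j, k$ equals $i$,'' but what you actually want is the observation that for every pair $j \neq k$ you can choose the sign flip $\mtx{\Pi}_j$, which hits exactly one of the two indices. This is what you intended, and it works, but writing it as a ``for all $j \neq k$, choose $\mtx{\Pi}_j$'' makes the quantifier order explicit.
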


\begin{proof}
Suppose that $\mtx{\Pi}$ is a signed permutation, drawn uniformly
at random.  For $p = 0, 1, 2, 3, \dots$,
$$
\Expect \mtx{X}^p = \Expect\big[ (\mtx{\Pi}^{\adj} \mtx{X} \mtx{\Pi})^p \big]
	= \Expect\big[ \Expect\big[ \mtx{\Pi}^\adj \mtx{X}^p \mtx{\Pi} \, \big\vert \, \mtx{X} \big] \big]
	= \Expect\big[ \big( \ntr \mtx{X}^p \big) \cdot \Id \big]
	= \big(\Expect \ntr \mtx{X}^p \big) \cdot \Id
$$
The first relation uses invariance under signed permutation, and
the second relies on the fact that signed permutations are unitary.
Averaging a fixed matrix over signed permutations yields
the identity times the normalized trace of the matrix.
\end{proof}

Proposition~\ref{prop:signed-perm} applies to many types of random matrices.
In particular, the diagonal Gaussian matrix $\mtx{X}_{\rm diag}$
and the GOE matrix $\mtx{X}_{\rm goe}$ are both strongly isotropic because of this result.
Other types of distributional symmetry can also lead to strong isotropy.

\begin{remark}[Group Orbits]
Here is a more general class of matrix Gaussian series where we can
verify strong isotropy using abstract arguments.
Let $\coll{G}$ be a unitary representation of a finite group, and
let $\mtx{A}$ be a fixed Hermitian matrix with the same dimension.
Consider the random Hermitian matrix
$$
\mtx{X} := \sum_{\mtx{U} \in \coll{G}} \gamma_{\mtx{U}} \cdot \mtx{U} \mtx{A} \mtx{U}^\adj
$$
where $\{ \gamma_{\mtx{U}} : \mtx{U} \in \coll{G} \}$ is an independent family of
standard normal variables.
Since $\coll{G}$ acts on itself by permutation,
$$
\mtx{UXU}^\adj \sim \mtx{X}
\quad\text{for each $\mtx{U} \in \coll{G}$.}
$$
This observation allows us to perform averaging arguments like the one
in Proposition~\ref{prop:signed-perm}.

There are several ways to apply this property to
argue that $\mtx{X}$ is strongly isotropic.
For example, it suffices that
$$
\coll{G}' := \{ \mtx{M} : \mtx{MU} = \mtx{UM} \text{ for all $\mtx{U} \in \coll{G}$ } \}
	= \{ z \Id : z \in \C \}.
$$
It is also sufficient that $\{ \mtx{U} \vct{a} : \mtx{U} \in \coll{G} \}$
forms a (complete) tight frame for every vector $\vct{a}$;
see the paper~\cite{VW08:Tight-Frames} for some
situations where this condition holds.
\end{remark}

\begin{remark}[Spherical Designs]
A \term{spherical $t$-design} is a collection $\{ \vct{u}_i : i = 1, \dots, N \}$
of points on the unit sphere $\mathbb{S}^{d-1}$ in $\R^d$ with the property that
$$
\int_{\mathbb{S}^{d-1}} \varphi(\vct{u}) \idiff{\mu}(\vct{u})
= \frac{1}{N} \sum_{i=1}^N \varphi(\vct{u}_i)
$$
where $\varphi$ is an arbitrary algebraic polynomial in $d$ variables with degree $t$
and $\diff{\mu}$ is the Haar measure on the sphere $\mathbb{S}^{d-1}$.
See the paper~\cite{BRV13:Optimal-Asymptotic} for existence
results and background references.

Given a spherical $t$-design, consider the random matrix
$$
\mtx{X} := \sum_{i=1}^N \gamma_i \vct{u}_i \vct{u}_i^\adj.
$$
where $\{\gamma_i : i = 1, \dots, N \}$ is an independent family
of standard normal variables.
By construction, this random matrix has the property that
$$
\Expect \mtx{X}^{p} = (\Expect \ntr \mtx{X}^p) \cdot \Id
\quad\text{for $p = 0, 1, 2, \dots, \lfloor t/2 \rfloor$.}
$$
This variant of the strong isotropy property is sufficient
for many purposes, provided that $t \approx \log d$.
\end{remark}

\subsection{A Second-Order Khintchine Inequality under Strong Isotropy}
\label{sec:k2-strong-intro}

The second major result of this paper is a second-order matrix Khintchine
inequality that is valid for matrix Gaussian series with the strong isotropy property.
Like Theorem~\ref{thm:k2-intro}, this result uses the alignment parameter
to control the norm of the random matrix.

\begin{theorem}[Second-Order Khintchine under Strong Isotropy] \label{thm:k2-strong-intro}
Consider an Hermitian matrix Gaussian series $\mtx{X} := \sum\nolimits_{i=1}^n \gamma_i \mtx{H}_i$
with dimension $d$, as in~\eqref{eqn:matrix-gauss-series}, and assume that $\mtx{X}$ is strongly isotropic.
Introduce the matrix standard deviation and matrix alignment parameters:
$$
\sigma(\mtx{X}) := \norm{ \sum\nolimits_{i=1}^n \mtx{H}_i^2  }^{1/2}
\quad\text{and}\quad
w(\mtx{X}) := \max_{\mtx{Q}_{\ell}} \ \norm{ \sum\nolimits_{i,j=1}^n
	\mtx{H}_i \mtx{Q}_1 \mtx{H}_j \mtx{Q}_2 \mtx{H}_i \mtx{Q}_3 \mtx{H}_j }^{1/4}.
$$
The maximum ranges over a triple $(\mtx{Q}_1, \mtx{Q}_2, \mtx{Q}_3)$ of unitary matrices.
Then, for each integer $p \geq 1$,
$$
\big( \Expect \pnorm{2p}{\mtx{X}}^{2p} \big)^{1/(2p)}
	\quad\leq\quad \left[ 2 \sigma(\mtx{X}) \ + \ 2^{1/4} \, p^{5/4} w(\mtx{X}) \right] \cdot d^{1/(2p)}.
$$
The symbol $\norm{\cdot}$ refers to the spectral norm, while $\pnorm{q}{\cdot}$ is the Schatten $q$-norm.
\end{theorem}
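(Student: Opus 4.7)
The plan is to exploit strong isotropy to collapse the Schatten moment to a scalar moment, expand that scalar moment via Wick's formula, and split the resulting sum over pair partitions into non-crossing and crossing contributions.

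First, because $\Expect \mtx{X}^{2p}$ is a scalar multiple of the identity under strong isotropy, we obtain the identity
\[
\Expect \pnorm{2p}{\mtx{X}}^{2p} \;=\; \trace \Expect \mtx{X}^{2p} \;=\; d \cdot \Expect \ntr \mtx{X}^{2p}.
\]
This is the decisive simplification over the general case: the factor $d$ is the only cost of passing between the trace and the normalized trace, so the dimensional prefactor $d^{1/(2p)}$ in the claim drops out naturally. It remains to bound $m_{2p} \defeq \Expect \ntr \mtx{X}^{2p}$, which I expand by the Isserlis--Wick Gaussian moment formula:
\[
m_{2p} \;=\; \sum_{\pi \in P_2(2p)} \, \sum_{\alpha \,\pi\text{-compatible}} \ntr\bigl[\mtx{H}_{\alpha(1)} \mtx{H}_{\alpha(2)} \cdots \mtx{H}_{\alpha(2p)}\bigr],
\]
where $P_2(2p)$ is the set of pair partitions of $[2p] = \{1, \dots, 2p\}$ and $\alpha: [2p]\to [n]$ is constant on each pair of $\pi$.

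For the non-crossing pair partitions, I iteratively contract adjacent pairs $\{k, k+1\}$ via $\mtx{H}_{\alpha(k)} \mtx{H}_{\alpha(k+1)} = \mtx{H}_{\alpha(k)}^2$; summing over $\alpha(k)$ produces $V \defeq \sum_i \mtx{H}_i^2 = \mathbf{Var}(\mtx{X})$. Iterating collapses the inner sum to $\ntr(V^p) \leq \norm{V}^p = \sigma(\mtx{X})^{2p}$. Since the non-crossing pair partitions of $[2p]$ are counted by the Catalan number $C_p \leq 4^p$, the non-crossing part of $m_{2p}$ is at most $(2\sigma(\mtx{X}))^{2p}$, which accounts for the first summand of the claimed bound after taking the $(2p)$-th root.

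For a crossing pair partition $\pi$, I locate an interlocked quadruple $a < b < c < d$ with $\{a, c\}, \{b, d\} \in \pi$. Polar-decomposing the three matrix products sitting in the intervals $(a, b)$, $(b, c)$, $(c, d)$, the unitary parts can be absorbed into the three unitaries $\mtx{Q}_1, \mtx{Q}_2, \mtx{Q}_3$ appearing in the definition of $w$, while the positive parts are controlled by a matrix H\"older inequality. This separates an ``alignment block'' of spectral norm at most $w^4$ from a residual product of $2p - 4$ matrix factors whose contribution is at most $\sigma^{2p-4}$ by the non-crossing argument. Iterating on pair partitions with $g$ disjoint interlocked quadruples yields a per-partition bound of the form $\sigma^{2p - 4g} w^{4g}$. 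Aggregating by genus and using the Stirling estimate $(2p-1)!! = O((2p/\mathrm{e})^p)$ for the total pair-partition count, the crossing contribution is bounded, after the $(2p)$-th root, by the polynomial factor $2^{1/4} p^{5/4}$ on $w$. Combining the two contributions closes the proof.

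The principal obstacle will be the crossing analysis: reducing the trace of a general crossing product to an estimate in $w$ requires careful absorption of the positive parts left over from polar decomposition of the intervening products, and the genus-by-genus combinatorial bookkeeping must be sharp enough to yield the $p^{5/4}$ exponent rather than the cruder count $\sqrt{p}$. The three-unitary form of the alignment parameter is tuned precisely so that the interior polar factors can be absorbed, which is what makes the extraction viable; tracking constants through iterated extractions is the delicate piece.
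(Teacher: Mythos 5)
Your proposal takes a genuinely different route from the paper: a direct Wick/Isserlis expansion over pair partitions, with a non-crossing/crossing split and genus bookkeeping, rather than the paper's Gaussian integration by parts combined with the Ornstein--Uhlenbeck covariance identity, multilinear complex interpolation, and an explicit Catalan-type recursion that is solved by induction. The combinatorial picture you draw is a reasonable heuristic for the theorem, and it matches the intuition from free probability. However, as written there are at least three substantive gaps.

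First, the non-crossing contraction is not an identity. You assert that iterating $\mtx{H}_{\alpha(k)}\mtx{H}_{\alpha(k+1)} = \mtx{H}_{\alpha(k)}^2$ collapses each non-crossing term to $\ntr(\mtx{V}^p)$, but $\mtx{V}$ and the $\mtx{H}_i$ do not commute, so the contracted $\mtx{V}$'s cannot be aggregated. For example the non-crossing partition $\{1,4\},\{2,3\},\{5,6\}$ yields $\sum_i \ntr[\mtx{H}_i \mtx{V} \mtx{H}_i \mtx{V}]$, which is not $\ntr \mtx{V}^3$. What is true is the inequality: iterating the operator bound $\sum_i \mtx{H}_i \mtx{A} \mtx{H}_i \preccurlyeq \norm{\mtx{A}} \, \mtx{V}$ for positive semidefinite $\mtx{A}$ gives $\leq \norm{\mtx{V}}^p = \sigma^{2p}$ per non-crossing partition. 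You should state and prove this bound rather than claim an identity.

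Second, and more seriously, the crossing estimate is never actually established. The claim that polar-decomposing the three intervening blocks lets you absorb their unitary parts into $\mtx{Q}_1, \mtx{Q}_2, \mtx{Q}_3$ and control their positive parts by matrix H{\"o}lder is exactly where the difficulty lives, and you acknowledge this but offer no mechanism for it. In the paper this step is carried out by a multilinear complex interpolation result (Proposition~\ref{prop:interpolation} / Proposition~\ref{prop:interp-fancy}), which is precisely engineered to replace the intervening factors with unitaries that commute with them while transferring the ``positive weight'' to a single block controlled by H{\"o}lder. A bare polar decomposition does not do this; the scalar-valued H{\"o}lder inequality does not factor a long product across a crossing. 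Likewise, the reduction ``$g$ disjoint interlocked quadruples gives $\sigma^{2p-4g}w^{4g}$'' assumes a clean disjointness structure that fails for many crossing partitions (e.g.\ $\{1,4\},\{2,5\},\{3,6\}$, where every pair crosses every other and no two interlocked quadruples are disjoint), and the genus-by-genus count that is supposed to produce the exponent $5/4$ is never carried out.

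Third, you never actually use strong isotropy. The identity $\Expect\pnorm{2p}{\mtx{X}}^{2p} = d\,\Expect\ntr\mtx{X}^{2p}$ is a triviality for any Hermitian $\mtx{X}$ (since $\abs{\mtx{X}}^{2p} = \mtx{X}^{2p}$); it is not the ``decisive simplification'' you describe and does not invoke the hypothesis. The paper needs strong isotropy in an essential way: after integration by parts it centers $\mtx{X}^q$ and $\mtx{X}^{2p-q}$, and the isotropy hypothesis $\Expect\mtx{X}^q = (\Expect\ntr\mtx{X}^q)\Id$ is what makes the uncentered cross-term collapse to $\sigma^2\mu_q\mu_{2p-q}$, giving the Catalan recursion with the sharp constant $\Cat_p^{1/(2p)}\to 2$ in front of $\sigma$. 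Your argument, if it worked, would prove the theorem with no isotropy hypothesis at all; that should be a red flag. Either isotropy must enter somewhere in your crossing analysis (unclear where), or you are implicitly claiming a strictly stronger theorem that you have not substantiated.
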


\noindent
The proof of this result appears in Section~\ref{sec:k2-strong},
where we also establish a lower bound.

Theorem~\ref{thm:k2-strong-intro} shows that the  moments of the random matrix $\mtx{X}$ are controlled
by the standard deviation $\sigma(\mtx{X})$ whenever $p^{5/4} w(\mtx{X}) \ll \sigma(\mtx{X})$.
If we take $p = \lceil \log d \rceil$,
the Schatten $2p$-norm is essentially the same as the spectral norm,
and the dimensional factor on the right-hand side is negligible.
Therefore,
$$
w(\mtx{X}) \log^{5/4} d \ll \sigma(\mtx{X})
\quad\text{implies}\quad
\Expect \norm{\mtx{X}} \lessapprox 2 \sigma(\mtx{X}).
$$
In the presence of strong isotropy, the spectral norm of a matrix Gaussian series is
comparable with the standard deviation $\sigma(\mtx{X})$ whenever the alignment parameter
$w(\mtx{X})$ is relatively small!

In particular, we can apply this result to the GOE matrix $\mtx{X}_{\rm goe}$
because of Proposition~\ref{prop:signed-perm}.
The calculation~\eqref{eqn:goe-stats-intro} of the standard deviation
and alignment parameters ensures that
$
\Expect \norm{\smash{\mtx{X}_{\rm goe}}} \lessapprox 2.
$
As we observed in~\eqref{eqn:khintchine-examples-2}, this bound is sharp.
For this example, we can even take $p \approx d^{1/5}$,
which leads to very good probability bounds via Markov's inequality.
Furthermore, a more detailed version of Theorem~\ref{thm:k2-strong-intro},
appearing in Section~\ref{sec:k2-strong}, is precise enough to show
that the semicircle law is the limiting spectral distribution of the GOE.

On the other hand, the dependence on the exponent $p$ in Theorem~\ref{thm:k2-strong-intro}
is suboptimal.
This point is evident when we consider the diagonal Gaussian matrix $\mtx{X}_{\rm diag}(d)$.
Indeed, Theorem~\ref{thm:k2-strong-intro} only implies the bound
$$
\Expect \norm{\smash{\mtx{X}_{\rm diag}}} \leq {\rm const} \cdot \log^{5/4} d.
$$
As we observed in~\eqref{eqn:khintchine-examples-2}, the power on the logarithm should be one-half.

\subsection{Discussion}
\label{sec:discussion}

This paper opens a new chapter in the theory of matrix concentration
and noncommutative moment inequalities.
Our main technical contribution is to demonstrate that
the matrix Khintchine inequality, Proposition~\ref{prop:khintchine},
is not the last word on the behavior of a matrix Gaussian series.
Indeed, we have shown that the matrix variance does not contain sufficient
information to determine the expected norm of a matrix Gaussian series.
We have also identified another quantity, the matrix alignment parameter,
that allows us to obtain better bounds for every matrix Gaussian series.
Furthermore, in the presence of more extensive distributional information,
it is even possible to obtain numerically sharp bounds for the norm of certain
matrix Gaussian series.

There are a number of ways to extend the ideas and results in this paper:

\begin{description} \setlength{\itemsep}{0.5pc}

\item	[Higher-Order Alignment]  If we consider alignment parameters involving
$2k$ coefficient matrices, it is possible to improve the term $p^{1/4} \sigma_{2p}$
in Theorem~\ref{thm:k2-intro} to $p^{1/(2k)} \sigma_{2p}$.  See Section~\ref{sec:k2-discussion}
for some additional details.

\item	[Other Matrix Series]  We can use exchangeable pairs techniques~\cite{MJCFT14:Matrix-Concentration}
to study matrix series of the form $\mtx{X} := \sum\nolimits_{i=1}^n \xi_i \mtx{H}_i$ where $\{ \xi_i \}$
is an independent family of scalar random variables.  This approach is potentially quite interesting when
the $\xi_i$ are Bernoulli (that is, 0--1) random variables.

\item	[Independent Sums]  We can use conditioning and symmetrization, as in~\eqref{eqn:cond-symm},
to apply Theorem~\ref{thm:k2-intro} to a sum of independent random matrices.
See~\cite[App.]{CGT12:Masked-Sample} for an example of this type of argument.

\item	[Rectangular Matrices]  The techniques here also give results for rectangular random matrices by way of
the Hermitian dilation~\cite[Sec.~2.1.16]{Tro15:Introduction-Matrix}.  In this setting, a different notion
of strong isotropy becomes relevant; see Section~\ref{sec:k2-strong-discussion}.

\end{description}

\noindent
We have not elaborated on these ideas because there is
also evidence that alignment parameters will not lead to final
results on matrix concentration.  

\subsection{Related Work}
\label{sec:related}

There are very few techniques in the literature on random matrices that
satisfy all three of our three requirements: flexibility, ease of use, and power.
In particular, for many practical applications, it is important to be able to work with
an arbitrary sum of independent random matrices.  We have chosen to study matrix
Gaussian series because they are the simplest instance of this model,
and they may lead to further insights about the general problem.

Most classical work in random matrix theory concerns very special classes
of random matrices; the books~\cite{BS10:Spectral-Analysis,Tao12:Topics-Random}
provide an overview of some of the main lines of research in this field.
There are some specific subareas of random matrix theory that address more
general models.  The monograph~\cite{NS06:Lectures-Combinatorics} gives an introduction to
free probability.  The book chapter~\cite{Ver12:Introduction-Nonasymptotic}
describes a collection of methods from Banach space geometry.
The monograph~\cite{Tro15:Introduction-Matrix} covers the theory of
matrix concentration.  The last three works have a wide scope of applicability,
but none of them provides the ultimate description of the behavior of a sum of
independent random matrices.

There is one specific strand of research that we would like to draw out because it
is very close in spirit to this paper.
Recently, Bandeira \& van Handel~\cite{BV14:Sharp-Nonasymptotic}
and van Handel~\cite{VH15:Spectral-Norm} have studied the behavior
of a real symmetric Gaussian matrix whose entries are
independent and centered but have inhomogeneous variances (the \term{independent-entry model}).
A $d \times d$ random matrix from this class can be written as
$$
\mtx{X}_{\rm indep} := \sum\nolimits_{i,j=1}^d a_{ij} \gamma_{ij} \cdot (\mathbf{E}_{ij} + \mathbf{E}_{ji})
\quad\text{for $a_{ij} \in \R$.}
$$
As usual, $\{\gamma_{ij}\}$ is an independent family of standard normal random variables,
and we assume that $a_{ij} = a_{ji}$ without loss of generality.

To situate this model in the context of our work, observe that matrix Gaussian series
are significantly more general than the independent-entry model.
The strongly isotropic model is incomparable with the independent-entry model.
To see why, recall that strongly isotropic matrices can have dependent entries.
At the same time, $\Expect \mtx{X}_{\rm indep}^p$ is diagonal for each integer $p \geq 0$,
but it need not be a scalar matrix.

For the independent-entry model, Bandeira \& van Handel~\cite{BV14:Sharp-Nonasymptotic}
established the following (sharp) bound:
\begin{equation} \label{eqn:bvh}
\Expect \norm{\smash{\mtx{X}_{\rm indep}}}
	\lessapprox 2\sigma(\mtx{X}_{\rm indep}) + {\rm const} \cdot \max\nolimits_{ij} \abs{\smash{a_{ij}}}
	\cdot \sqrt{\log d}
\end{equation}
The maximum entry $\max_{ij} \abs{\smash{a_{ij}}}$ plays the same role in this formula as the
matrix alignment parameter plays in this paper.
The paper~\cite{BV14:Sharp-Nonasymptotic} leans heavily on the independence assumption,
so it is not clear whether the ideas extend to a more general setting.

To compare the result~\eqref{eqn:bvh} with the work here, we can compute the matrix alignment parameter
for the independent-entry model using a difficult extension of the calculation in Section~\ref{sec:gauss-wigner}.
This effort yields
$$
w(\mtx{X}_{\rm indep}) \approx \left( \max\nolimits_i \sum\nolimits_{j} \abs{\smash{a_{ij}}}^4 \right)^{1/4}.
$$
We see that the matrix alignment parameter is somewhat larger than the maximum entry $\max_{ij} \abs{\smash{a_{ij}}}$.
Thus, for the independent model, Theorem~\ref{thm:k2-intro} gives us a better result than the classical
Khintchine inequality, Proposition~\ref{prop:khintchine}, but it is somewhat weaker than~\eqref{eqn:bvh}.
Theorem~\ref{thm:k2-strong-intro} would give a result close to the bound~\eqref{eqn:bvh},
but it does not always apply because the independent-entry model need not be strongly isotropic.

The independent-entry model is not adequate to reach results with the
same power and scope as the current generation of matrix concentration
bounds~\cite{Tro15:Introduction-Matrix}.  Nevertheless, the estimate~\eqref{eqn:bvh}
strongly suggests that there are better ways of summarizing the interactions of the
coefficients in an Hermitian matrix Gaussian series $\mtx{X} := \sum\nolimits_{i=1}^n \gamma_{i} \mtx{H}_i$
than the alignment parameter $w(\mtx{X})$.  One possibility is the weak variance parameter:
$$
\sigma_{\star}(\mtx{X}) := \sup_{\norm{\vct{u}}=\norm{\vct{v}}=1}
	\left( \sum\nolimits_{i=1}^n \abssqip{\vct{u}}{\mtx{H}_i \vct{v}} \right)^{1/2}.
$$
For the independent-entry  model, this quantity reduces to ${\rm const} \cdot \max_{ij} \abs{\smash{a_{ij}}}$.
The idea of considering $\sigma_{\star}(\mtx{X})$ is motivated by the discussion
in~\cite[Sec.~4]{Tro12:User-Friendly-FOCM}, as well as the work in~\cite{BV14:Sharp-Nonasymptotic,VH15:Spectral-Norm}.
Unfortunately, at this stage, it is not clear whether there are any parameters that allow us to
obtain a simple description of the behavior of a Gaussian matrix in the absence of burdensome
independence or isotropy assumptions.  This is a frontier for future work.

\section{Computation of the Matrix Alignment Parameters}
\label{sec:compute-interleave}

In this section, we show how to compute the matrix alignment parameter for the
two random matrices in the introduction, the diagonal Gaussian matrix and the GOE matrix.
Afterward, we show by example that neither Theorem~\ref{thm:k2-intro} nor Theorem~\ref{thm:k2-strong-intro}
can hold if we remove the unitary factors from the matrix alignment parameter.

\subsection{A Diagonal Gaussian Matrix}

The diagonal Gaussian matrix takes the form
$$
\mtx{X}_{\rm diag} := \sum\nolimits_{i=1}^d \gamma_i \mathbf{E}_{ii}.
$$
The matrix variance $\mathbf{Var}(\mtx{X}_{\rm diag}) = \Expect \mtx{X}_{\rm diag}^2 = \Id$.
It follows that the matrix standard deviation parameters, defined in~\eqref{eqn:mtx-stdev-intro},
satisfy
$$
\sigma_{p}(\mtx{X}_{\rm diag})
	= \pnorm{p}{ \mathbf{Var}(\mtx{X}_{\rm diag})^{1/2} }
	= d^{1/p}
\quad\text{for $1 \leq p \leq \infty$}.
$$
We will show that the matrix alignment parameters, defined in~\eqref{eqn:def-matrix-alignment},
satisfy
$$
w_{p}(\mtx{X}_{\rm diag}) = d^{1/p}
\quad\text{for $4 \leq p \leq \infty$}.
$$
Thus, for this example, the second-order matrix Khintchine inequalities, Theorem~\ref{thm:k2-intro}
and Theorem~\ref{thm:k2-strong-intro}, do not improve over the matrix Khintchine inequality,
Proposition~\ref{prop:khintchine}.
This outcome is natural, given that the classical result is essentially optimal in this case.

Let us evaluate the matrix alignment parameter.
For a triple $(\mtx{Q}, \mtx{S}, \mtx{U})$ of unitary matrices, form the sum
$$
\mtx{W}(\mtx{Q}, \mtx{S}, \mtx{U}) :=
\sum\nolimits_{i,j=1}^d \mathbf{E}_{ii} \mtx{Q} \mathbf{E}_{jj} \mtx{S} \mathbf{E}_{ii} \mtx{U} \mathbf{E}_{jj}
	= \sum\nolimits_{i,j=1}^d q_{ij} s_{ji} u_{ij} \cdot \mathbf{E}_{ij}
	= \mtx{Q} \odot \mtx{S}^\transp \odot \mtx{U}.
$$
We have written $\odot$ for the Schur (i.e., componentwise) product, and ${}^\transp$ is the
transpose operation.  When $\mtx{Q} = \mtx{S} = \mtx{U} = \Id$, the sum collapses:
$\mtx{W}(\Id, \Id, \Id) = \Id$.  Therefore,
$$
w_{p}(\mtx{X}_{\rm diag}) = \max_{\mtx{Q},\mtx{S},\mtx{U}} \ \pnorm{p}{ \abs{\mtx{W}(\mtx{Q}, \mtx{S}, \mtx{U})}^{1/4} }
	\geq \pnorm{p}{ \Id } = d^{1/p}
	\quad\text{for $p \geq 1$.}
$$
But Proposition~\ref{prop:sigma-versus-w} shows that
$$
w_{p}(\mtx{X}_{\rm diag}) \leq \sigma_{p}(\mtx{X}_{\rm diag}) = d^{1/p}
\quad\text{for each $p \geq 4$.}
$$
Therefore, $w_p(\mtx{X}_{\rm diag}) = \sigma_p(\mtx{X}_{\rm diag}) = d^{1/p}$ for $p \geq 4$.
The result for $p = \infty$ follows when we take limits.

\begin{remark}[Commutativity]
A similar calculation is valid whenever the family $\{\mtx{H}_i\}$
of coefficient matrices in the matrix Gaussian series~\eqref{eqn:matrix-gauss-series}
commutes.
\end{remark}

\subsection{A GOE Matrix}
\label{sec:gauss-wigner}

The GOE matrix takes the form
$$
\mtx{X}_{\rm goe} := \frac{1}{\sqrt{2d}} \sum\nolimits_{i,j=1}^d \gamma_{ij} (\mathbf{E}_{ij} + \mathbf{E}_{ji}).
$$
An easy calculation shows that the matrix variance satisfies
$$
\mathbf{Var}(\mtx{X}_{\rm goe}) = \Expect \mtx{X}_{\rm goe}^2
	= \frac{1}{2d} \sum\nolimits_{i,j=1}^d (\mathbf{E}_{ij} + \mathbf{E}_{ji})^2
	= (1 + d^{-1}) \cdot \Id.
$$
Therefore, the matrix standard deviation parameters, defined in~\eqref{eqn:mtx-stdev-intro},
equal
$$
\sigma_{p}(\mtx{X}_{\rm goe}) 	= \pnorm{p}{ \mathbf{Var}(\mtx{X}_{\rm goe})^{1/2} }
	= \sqrt{1+d^{-1}} \cdot d^{1/p}
	\quad\text{for $1 \leq p \leq \infty$.}
$$
We will demonstrate that the matrix alignment parameters, defined in~\eqref{eqn:def-matrix-alignment},
satisfy
$$
w_{p}(\mtx{X}_{\rm goe}) \leq \big( d^{-1} + 3 d^{-2} \big)^{1/4} \cdot d^{1/p}
\quad\text{for $4 \leq p \leq \infty$}.
$$
When $d$ is large, the matrix alignment parameters are much smaller than the matrix standard deviation
parameters.  As a consequence, the second-order matrix Khintchine inequalities deliver a substantial gain over
the classical matrix Khintchine inequality.

Let us compute the matrix alignment parameter. For a triple $(\mtx{Q}, \mtx{S}, \mtx{U})$ of unitary matrices,
introduce the (unnormalized) sum
$$
\mtx{W}(\mtx{Q}, \mtx{S}, \mtx{U}) :=
	\sum\nolimits_{i_1, i_2, j_1, j_2 = 1}^d
		(\mathbf{E}_{i_1 i_2} + \mathbf{E}_{i_2 i_1}) \mtx{Q}
		(\mathbf{E}_{j_1 j_2} + \mathbf{E}_{j_2 j_1}) \mtx{S}
		(\mathbf{E}_{i_1 i_2} + \mathbf{E}_{i_2 i_1}) \mtx{U}
		(\mathbf{E}_{j_1 j_2} + \mathbf{E}_{j_2 j_1}).
$$
It is not hard to evaluate this sum if we take care.  First, distribute terms:
$$
\begin{aligned}
\mtx{W}(\mtx{Q}, \mtx{S}, \mtx{U}) =
\sum\nolimits_{i_1, i_2, j_1,j_2 = 1}^{d}
	\bigg[ &\big( q_{i_2 j_1} s_{j_2 i_1} u_{i_2 j_2}
	   + q_{i_2 j_1} s_{j_2 i_2} u_{i_1 j_2}
	   + q_{i_2 j_2} s_{j_1 i_1} u_{i_2 j_2}
	   + q_{i_2 j_2} s_{j_1 i_2} u_{i_1 j_2} \big) \cdot \mathbf{E}_{i_1 j_1} \phantom{\bigg]}\\
	& + \big( q_{i_2 j_1} s_{j_2 i_1} u_{i_2 j_1}
	   + q_{i_2 j_1} s_{j_2 i_2} u_{i_1 j_1}
	   + q_{i_2 j_2} s_{j_1 i_1} u_{i_2 j_1}
	   + q_{i_2 j_2} s_{j_1 i_2} u_{i_1 j_1} \big) \cdot \mathbf{E}_{i_1 j_2} \phantom{\bigg]}\\
	& +\big( q_{i_1 j_1} s_{j_2 i_1} u_{i_2 j_2}
	   + q_{i_1 j_1} s_{j_2 i_2} u_{i_1 j_2}
	   + q_{i_1 j_2} s_{j_1 i_1} u_{i_2 j_2}
	   + q_{i_1 j_2} s_{j_1 i_2} u_{i_1 j_2} \big) \cdot \mathbf{E}_{i_2 j_1} \phantom{\bigg]}\\
	& +\big(q_{i_1 j_1} s_{j_2 i_1} u_{i_2 j_1}
	   + q_{i_1 j_1} s_{j_2 i_2} u_{i_1 j_1}
	   + q_{i_1 j_2} s_{j_1 i_1} u_{i_2 j_1}
	   + q_{i_1 j_2} s_{j_1 i_2} u_{i_1 j_1} \big) \cdot \mathbf{E}_{i_2 j_2} \bigg].
\end{aligned}
$$
In each line, we can sum through the two free indices to identify four matrix products.
For example, in the first line, we can sum on $i_2$ and $j_2$.  This step yields
$$
\begin{aligned}
\mtx{W}(\mtx{Q}, \mtx{S}, \mtx{U}) =
&\sum\nolimits_{i_1, j_1 = 1}^{d}
	\big( \mtx{S}^\transp \mtx{U}^\transp \mtx{Q} 
	   + \mtx{USQ} 
	   + \trace(\mtx{Q}^\transp \mtx{U}) \cdot \mtx{S}^\transp 
	   + \mtx{U} \mtx{Q}^\transp \mtx{S}^\transp \big)_{i_1 j_1} \cdot \mathbf{E}_{i_1 j_1} \\
+ &\sum\nolimits_{i_1, j_2 = 1}^d
	\big( \trace(\mtx{Q}^\transp \mtx{U}) \cdot \mtx{S}^\transp 
	   + \mtx{U} \mtx{Q}^\transp \mtx{S}^\transp
	   + \mtx{S}^\transp \mtx{U}^\transp \mtx{Q}
	   + \mtx{USQ} \big)_{i_1 j_2} \cdot \mathbf{E}_{i_1 j_2} \\
+ &\sum\nolimits_{i_2, j_1 = 1}^d
	\big( \mtx{USQ}  
	   + \mtx{S}^\transp \mtx{U}^\transp \mtx{Q}  
	   + \mtx{U} \mtx{Q}^\transp \mtx{S}^\transp  
	   + \trace(\mtx{QU}) \cdot \mtx{S}^\transp \big)_{i_2 j_1} \cdot \mathbf{E}_{i_2 j_1}\\
+ &\sum\nolimits_{i_2, j_2 = 1}^d
	\big( \mtx{U}\mtx{Q}^\transp \mtx{S}^\transp
	   + \trace(\mtx{QU}) \cdot \mtx{S}^\transp
	   + \mtx{USQ}
	   + \mtx{S}^\transp \mtx{U}^\transp \mtx{Q} \big)_{i_2 j_2} \cdot \mathbf{E}_{i_2 j_2} \bigg].
\end{aligned}
$$
Sum through the remaining indices to reach
$$
\begin{aligned}
\mtx{W}(\mtx{Q}, \mtx{S}, \mtx{U}) = \
	&\big( \mtx{S}^\transp \mtx{U}^\transp \mtx{Q} 
	   + \mtx{USQ} 
	   + \trace(\mtx{Q}^\transp \mtx{U}) \cdot \mtx{S}^\transp 
	   + \mtx{U} \mtx{Q}^\transp \mtx{S}^\transp \big) \\
	& + \big( \trace(\mtx{Q}^\transp \mtx{U}) \cdot \mtx{S}^\transp 
	   + \mtx{U} \mtx{Q}^\transp \mtx{S}^\transp
	   + \mtx{S}^\transp \mtx{U}^\transp \mtx{Q}
	   + \mtx{USQ} \big) \\
	& +\big( \mtx{USQ}  
	   + \mtx{S}^\transp \mtx{U}^\transp \mtx{Q}  
	   + \mtx{U} \mtx{Q}^\transp \mtx{S}^\transp  
	   + \trace(\mtx{QU}) \cdot \mtx{S}^\transp \big) \\
	& +\big( \mtx{U}\mtx{Q}^\transp \mtx{S}^\transp
	   + \trace(\mtx{QU}) \cdot \mtx{S}^\transp
	   + \mtx{USQ}
	   + \mtx{S}^\transp \mtx{U}^\transp \mtx{Q} \big).
\end{aligned}
$$
Twelve of the sixteen terms are unitary matrices, and the remaining four
are scaled unitary matrices.  Furthermore, each trace is bounded in magnitude by $d$,
the worst case being $\mtx{Q} = \mtx{U} = \Id$.  Applying the definition of
the Schatten norm, the triangle inequality, and unitary invariance, we find that
$$
\pnorm{p}{ \abs{ \mtx{W}(\mtx{Q}, \mtx{S}, \mtx{U}) }^{1/4} }
	= \pnorm{p/4}{ \mtx{W}(\mtx{Q}, \mtx{S}, \mtx{U}) }^{1/4}
	\leq \left( (4d + 12) \cdot \pnorm{p/4}{ \Id } \right)^{1/4}
	= (4d + 12)^{1/4} \cdot d^{1/p}
	\quad\text{for $p \geq 4$.}
$$
To compute $w_p(\mtx{X}_{\rm goe})$, we must reintroduce the scaling $(2d)^{-1/2}$,
which gives the advertised result:
$$
w_p(\mtx{X}_{\rm goe}) \leq (2d)^{-1/2} \cdot (4d + 12)^{1/4} \cdot d^{1/p}
	= \big(d^{-1} + 3 d^{-2} \big)^{1/4} \cdot d^{1/p}.
$$
To obtain the bound for $p = \infty$, we simply take limits.

\subsection{The Unitaries are Necessary}
\label{sec:spin}

Suppose that $\mtx{X} := \sum\nolimits_{i=1}^n \gamma_i \mtx{H}_i$
is an Hermitian matrix Gaussian series with dimension $d$,
and let $\sigma(\mtx{X})$ be the matrix standard deviation~\eqref{eqn:mtx-stdev-intro}.
Consider the alternative alignment parameter
$$
\delta(\mtx{X}) := \norm{ \sum\nolimits_{i,j=1}^n \mtx{H}_i \mtx{H}_j \mtx{H}_i \mtx{H}_j }^{1/4}.
$$
This quantity is suggested by the discussion in Section~\ref{sec:prospects}.
Consider a general estimate of the form
\begin{equation} \label{eqn:putative-bd}
\Expect \norm{\mtx{X}}
	\quad\leq\quad f(d) \cdot \sigma(\mtx{X}) \ + \ g(d) \cdot \delta(\mtx{X}).
\end{equation}
We will demonstrate that, for every choice of the function $g$,
there is a lower bound $f(d) \geq {\rm const} \cdot \sqrt{\log d}$.
From this claim, we deduce that it is impossible to improve over the classical Khintchine
inequality by using the second-order quantity $\delta(\mtx{X})$.  Therefore, the unitary
matrices in the alignment parameter $w(\mtx{X})$ play a critical role.
Most of this argument was developed by Afonso Bandeira;
we are grateful to him for allowing us to include it.

Introduce the Pauli spin matrices
$$
\mtx{H}_1 := \begin{bmatrix} 1 & 0 \\ 0 & -1 \end{bmatrix} \qquad
\mtx{H}_2 := \begin{bmatrix} 0 & 1 \\ 1 & 0 \end{bmatrix} \qquad
\mtx{H}_3 := \begin{bmatrix} 0 & \iunit \\ -\iunit & 0 \end{bmatrix}.
$$
These matrices are Hermitian and unitary, so $\mtx{H}_i^2 = \Id$ for $i = 1, 2, 3$.
Furthermore, they satisfy the relations $(\mtx{H}_i \mtx{H}_j)^2 = - \Id$
when $i \neq j$.
Next, define $\mtx{H}_0 := \sqrt{\alpha}\, \Id$, where $\alpha := 2\sqrt{3} - 3$.  Calculate that
$$
\begin{aligned}
\sum\nolimits_{i,j=0}^3 \mtx{H}_i \mtx{H}_j \mtx{H}_i \mtx{H}_j
	&= \sum\nolimits_{i=0}^4 \mtx{H}_i^4 + \sum\nolimits_{j=1}^3 \mtx{H}_0 \mtx{H}_j \mtx{H}_0 \mtx{H}_j
	+ \sum\nolimits_{i=1}^3 \mtx{H}_i \mtx{H}_0 \mtx{H}_i \mtx{H}_0
	+ \sum\nolimits_{\substack{i, j = 1 \\ i \neq j}}^3 \mtx{H}_i \mtx{H}_j \mtx{H}_i \mtx{H}_j \\\
	&= (\alpha^2 + 3) \, \Id + 6 \alpha \, \Id - 6 \, \Id
	= (\alpha^2 +6\alpha - 3) \, \Id
	= \mtx{0}.
\end{aligned}
$$
Indeed, $\alpha$ is a positive root of the quadratic.

Consider the two-dimensional Gaussian series $\mtx{Y}$ generated by the matrices $\mtx{H}_0, \dots, \mtx{H}_3$:
$$
\mtx{Y} := \sum\nolimits_{i=0}^3 \gamma_i \mtx{H}_i.
$$
As usual, $\{\gamma_i\}$ is an independent family of standard normal variables.
For the series $\mtx{Y}$, we have already shown that the alternative alignment parameter $\delta(\mtx{Y}) = 0$.
Let us compute the variance and standard deviation:
$$
\mathbf{Var}(\mtx{Y}) = \sum\nolimits_{i=0}^3 \mtx{H}_i^2 = (\alpha + 3) \, \Id
	= 2 \sqrt{3} \, \Id
	\quad\text{and}\quad
	\sigma(\mtx{Y}) = \norm{ \mathbf{Var}(\mtx{Y}) }^{1/2}
	= 12^{1/4}.
$$
Expanding the random matrix $\mtx{Y}$ in coordinates, we also find that
$$
\mtx{Y} = \begin{bmatrix} \sqrt{\alpha} \, \gamma_0 + \gamma_1 & \gamma_2 + \iunit \gamma_3 \\
	\gamma_2 - \iunit \gamma_3 & \sqrt{\alpha} \, \gamma_0 - \gamma_1 \end{bmatrix}.
$$
Therefore, the top-left entry $(\mtx{Y})_{11}$ is a centered normal random variable
with variance $1 + \alpha = 2 (\sqrt{3} - 1)$.

To obtain the counterexample to the bound~\eqref{eqn:putative-bd}, fix an integer $d \geq 1$.
Let $\mtx{Y}_1, \dots, \mtx{Y}_d$ be independent copies of the two-dimensional
Gaussian series $\mtx{Y}$, and construct the $2d$-dimensional matrix Gaussian series
$$
\mtx{X}_{\rm spin} := \mtx{Y}_1 \oplus \dots \oplus \mtx{Y}_d
	= \sum\nolimits_{j=1}^{d} \mathbf{E}_{jj} \otimes \mtx{Y}_j
	\sim \sum\nolimits_{j=1}^{d} \sum\nolimits_{i=0}^3 \gamma_{ij} \, (\mathbf{E}_{jj} \otimes \mtx{H}_i).
$$
We have written $\oplus$ for direct sum and $\otimes$ for the Kronecker product; the matrices $\mathbf{E}_{jj}$
are the diagonal units with dimension $d \times d$; and $\{\gamma_{ij}\}$ is an independent family of standard
normal variables.

Extending the calculations above, we find that $\sigma(\mtx{X}_{\rm spin}) = 12^{1/4}$ and $\delta(\mtx{X}_{\rm spin}) = 0$.
Meanwhile, the norm of $\mtx{X}_{\rm spin}$ is bounded below by the absolute value of each of its diagonal entries.
In particular,
$$
\Expect \norm{ \smash{\mtx{X}_{\rm spin}} } \geq \Expect \max\nolimits_{j} \abs{ \smash{(\mtx{Y}_j)_{11}} }
	\geq {\rm const} \cdot \big( 2(\sqrt{3} - 1) \big)^{1/2} \cdot \sqrt{\log d}.
$$
We have used the fact that the expected maximum of $d$ independent standard normal variables
is proportional to $\sqrt{\log d}$.  Assuming that~\eqref{eqn:putative-bd} is valid,
we can sequence these estimates to obtain
$$
{\rm const} \cdot \sqrt{\log d}
	\leq f(d) \cdot \sigma(\mtx{X}_{\rm spin}) \ + \ g(d) \cdot \delta(\mtx{X}_{\rm spin})
	= 12^{1/4} \cdot f(d).
$$
Therefore, the function $f(d)$ must grow at least as fast as $\sqrt{\log d}$.
We conclude that a bound of the form~\eqref{eqn:putative-bd} can never improve
over the classical matrix Khintchine inequality.

\section{Notation \& Background}

Before we enter into the body of the paper, let us set some additional notation and state
a few background results.  First, $\mathbb{M}_d$ denotes the complex linear space of $d \times d$ matrices with complex entries.
We write $\mathbb{H}_d$ for the real-linear subspace of $\mathbb{M}_d$ that consists of Hermitian
matrices.  The symbol ${}^*$ represents conjugate transposition.
We write $\mtx{0}$ for the zero matrix and $\Id$ for the identity.
The matrix $\mathbf{E}_{ij}$ has a one in the $(i, j)$ position and zeros
elsewhere.  The dimensions of these matrices are typically determined by context.

For an Hermitian matrix $\mtx{A}$, we define the integer powers $\mtx{A}^{p}$
for $p = 0, 1, 2, 3, \dots$ in the usual way by iterated multiplication.
For a positive-semidefinite matrix $\mtx{P}$, we can also define
complex powers $\mtx{P}^z$
by raising each eigenvalue of $\mtx{P}$ to the power $z$ while
maintaining the eigenvectors.
In particular, $\mtx{P}^{1/2}$ is the unique positive-semidefinite square root of $\mtx{P}$.
The matrix absolute value is defined for a general matrix $\mtx{B}$
by the rule $\abs{\mtx{B}} := (\mtx{B}^\adj \mtx{B})^{1/2}$.  Note
that $\abs{\mtx{P}} = \mtx{P}$ when $\mtx{P}$ is positive semidefinite.

The \term{trace} and \term{normalized trace} of a matrix are given by
$$
\trace \mtx{B} := \sum\nolimits_{i=1}^d b_{ii}
\quad\text{and}\quad
\ntr \mtx{B} := \frac{1}{d} \sum\nolimits_{i=1}^d b_{ii}
\quad\text{for $\mtx{B} \in \mathbb{M}_d$.}
$$
We use the convention that a power binds before the trace to avoid unnecessary parentheses;
powers also bind before expectation.
The Schatten $p$-norm is defined for an arbitrary matrix $\mtx{B}$ via the rule
$$
\pnorm{p}{\mtx{B}} := \big( \trace \abs{\mtx{B}}^{p} \big)^{1/p}
\quad\text{for $p \geq 1$.}
$$
The Schatten $\infty$-norm $\pnorm{\infty}{\cdot}$ coincides with the spectral norm $\norm{\cdot}$.
This work uses both trace powers and Schatten norms, depending on which one is
conceptually clearer.
We require some H{\"o}lder inequalities involving the trace and the Schatten norms.
For matrices $\mtx{A}, \mtx{B} \in \mathbb{M}_d$ and $\varrho \geq 1$,
\begin{equation} \label{eqn:holder}
\abs{ \trace( \mtx{AB} ) }
	\leq \big( \trace \abs{\mtx{A}}^\varrho \big)^{1/\varrho} \cdot \big( \trace \abs{\mtx{B}}^{\varrho'} \big)^{1/\varrho'}
	\quad\text{where $\varrho' := \varrho / (\varrho - 1)$.}
\end{equation}
Furthermore,
\begin{equation} \label{eqn:holder-schatten}
\pnorm{\varrho}{ \smash{\mtx{A}^\adj\mtx{B}} }^2
	\leq \pnorm{\varrho}{ \smash{\mtx{A}^\adj \mtx{A}} }
	\cdot \pnorm{\varrho}{ \smash{\mtx{B}^\adj \mtx{B}} }.
\end{equation}
These results are drawn from~\cite[Chap.~IV]{Bha97:Matrix-Analysis}.

\section{The Trace Moments of a Matrix Gaussian Series}

For each major result in this paper, the starting point is a formula for the trace moments
of a matrix Gaussian series.

\begin{lemma}[Trace Moment Identity] \label{lem:trace-moments}
Let $\mtx{X} := \sum\nolimits_{i=1}^n \gamma_i \mtx{H}_i$ be an Hermitian matrix Gaussian series,
as in~\eqref{eqn:matrix-gauss-series}.
For each integer $p \geq 1$, we have the identity
\begin{equation} \label{eqn:gauss-trace-moments}
	\Expect \trace \mtx{X}^{2p}
	= \sum\nolimits_{q=0}^{2p-2} \sum\nolimits_{i=1}^n \Expect \trace\big[ \mtx{H}_i \mtx{X}^q \mtx{H}_i \mtx{X}^{2p - 2 - q} \big].
\end{equation}
\end{lemma}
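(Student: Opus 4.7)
The plan is to apply Gaussian integration by parts (Stein's lemma) one coordinate at a time. Start by writing
$$
\Expect \trace \mtx{X}^{2p}
= \Expect \trace\bigl[\mtx{X} \cdot \mtx{X}^{2p-1}\bigr]
= \sum_{i=1}^n \Expect\Bigl[ \gamma_i \cdot \trace\bigl(\mtx{H}_i \mtx{X}^{2p-1}\bigr) \Bigr],
$$
using linearity of trace and expectation together with $\mtx{X} = \sum_i \gamma_i \mtx{H}_i$. Each summand now has a single Gaussian factor multiplying a polynomial in the remaining Gaussians.

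Next, I would invoke Stein's identity: for an independent standard normal family $\{\gamma_j\}$ and a polynomially-bounded smooth function $f$,
$$
\Expect\bigl[ \gamma_i \, f(\gamma_1,\dots,\gamma_n) \bigr]
= \Expect\!\left[ \frac{\partial f}{\partial \gamma_i}(\gamma_1,\dots,\gamma_n) \right].
$$
Here $f(\vct{\gamma}) = \trace(\mtx{H}_i \mtx{X}^{2p-1})$ is a polynomial of degree $2p-1$ in $\vct{\gamma}$, so integrability is automatic. The key algebraic step is the noncommutative Leibniz rule: since $\partial_{\gamma_i} \mtx{X} = \mtx{H}_i$, iterating gives
$$
\partial_{\gamma_i} \mtx{X}^{2p-1}
= \sum_{q=0}^{2p-2} \mtx{X}^q \mtx{H}_i \mtx{X}^{2p-2-q}.
$$
Combining with linearity of the trace yields
$$
\Expect\bigl[\gamma_i \trace(\mtx{H}_i \mtx{X}^{2p-1})\bigr]
= \sum_{q=0}^{2p-2} \Expect \trace\bigl[ \mtx{H}_i \mtx{X}^q \mtx{H}_i \mtx{X}^{2p-2-q}\bigr].
$$
Summing over $i = 1, \dots, n$ produces exactly the right-hand side of~\eqref{eqn:gauss-trace-moments}.

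There is really no obstacle here; the proof is a one-line application of Gaussian integration by parts once the Leibniz expansion of $\partial_{\gamma_i} \mtx{X}^{2p-1}$ is written down. The only small thing to take care of is the order of the factors inside the trace: because $\mtx{H}_i$ appears on the left of $\mtx{X}^{2p-1}$ in the starting expression, and the derivative inserts a second $\mtx{H}_i$ between powers $\mtx{X}^q$ and $\mtx{X}^{2p-2-q}$, cyclicity of the trace is not needed to match the stated form. An alternative route, which I would mention as a sanity check, is to expand $\trace \mtx{X}^{2p}$ directly as $\sum \Expect[\gamma_{i_1} \cdots \gamma_{i_{2p}}] \trace(\mtx{H}_{i_1}\cdots \mtx{H}_{i_{2p}})$ and use Wick's formula for Gaussian moments; this produces the same identity after grouping pairs containing the first index.
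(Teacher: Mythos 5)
Your proof is correct and follows the same route as the paper: write $\trace \mtx{X}^{2p} = \trace[\mtx{X}\cdot\mtx{X}^{2p-1}]$, extract the factor $\gamma_i$, apply Gaussian integration by parts, and use the Leibniz expansion of $\partial_{\gamma_i}\mtx{X}^{2p-1}$. The Wick-formula alternative you mention as a sanity check is also valid but not the path the paper takes.
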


\noindent
The easy proof of Lemma~\ref{lem:trace-moments} appears in the next two subsections.

Integration by parts is not foreign in the study of Gaussian random matrices;
for example, see~\cite[Sec.~2.4.1]{AGZ10:Introduction-Random} or~\cite[Sec.~9]{Kem13:Introduction-Random}.
The exchangeable pairs method for establishing matrix concentration is also based on an elementary,
but conceptually challenging, analog of integration by parts~\cite[Lem.~2.4]{MJCFT14:Matrix-Concentration}.
Aside from these works, we are not aware of any application of related techniques to prove results
on matrix concentration.

\subsection{Preliminaries}

To obtain Lemma~\ref{lem:trace-moments}, the main auxiliary tool
is the classical integration by parts formula for a function of a standard
normal vector~\cite[Lem.~1.1.1]{NP12:Normal-Approximations}.
In the form required here, the result can be derived with basic calculus.

\begin{fact}[Gaussian Integration by Parts] \label{fact:gauss-ibp}
Let $\vct{\gamma} \in \R^n$ be a vector with independent standard normal entries,
and let $f : \R^n \to \R$ be a function whose derivative is absolutely integrable with
respect to the standard normal measure.  Then
\begin{equation} \label{eqn:ibp2}
\sum\nolimits_{i=1}^n \Expect\left[ \gamma_i \cdot f(\vct{\gamma}) \right]
	= \sum\nolimits_{i=1}^n \Expect \left[ (\partial_i f)(\vct{\gamma}) \right].
\end{equation}
The symbol $\partial_i$ denotes differentiation with respect to the $i$th coordinate.
\end{fact}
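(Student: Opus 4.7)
The plan is to reduce the $n$-dimensional identity to a single-variable integration by parts for each coordinate separately, then sum. The crucial analytic input is the Stein identity for the standard normal density $\phi(x) := (2\pi)^{-1/2} e^{-x^2/2}$, namely $\phi'(x) = -x \phi(x)$. This identity is the whole reason multiplication by $\gamma_i$ converts to differentiation $\partial_i$ under the Gaussian expectation.

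First I would fix an index $i$ and express $\Expect[\gamma_i f(\vct{\gamma})]$ as an iterated integral. By independence, the joint density factors as $\phi_n(\vct{x}) = \phi(x_i) \prod_{j \neq i} \phi(x_j)$, so Fubini lets me integrate in $x_i$ first with the remaining coordinates $\vct{x}_{-i}$ held fixed. Writing $g(t) := f(x_1, \dots, x_{i-1}, t, x_{i+1}, \dots, x_n)$ as a function of the single variable $t$, the inner integral becomes
$$
\int_{\R} t \, g(t) \, \phi(t) \idiff{t}
	= -\int_{\R} g(t) \phi'(t) \idiff{t}
	= \bigl[ -g(t) \phi(t) \bigr]_{-\infty}^{\infty} + \int_{\R} g'(t) \phi(t) \idiff{t}
	= \int_{\R} g'(t) \phi(t) \idiff{t},
$$
using the Stein identity and one-variable integration by parts. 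Integrating the resulting identity against the remaining coordinates and recognizing $g'(t) = (\partial_i f)(x_1, \dots, t, \dots, x_n)$ produces $\Expect[\gamma_i f(\vct{\gamma})] = \Expect[(\partial_i f)(\vct{\gamma})]$. Summing over $i$ gives the claim.

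The main obstacle is ensuring that the boundary terms $[-g(t) \phi(t)]_{-\infty}^{\infty}$ actually vanish and that Fubini is applicable. Both are justified by the hypothesis that $\partial_i f$ is absolutely integrable with respect to the standard Gaussian measure, combined with the super-polynomial decay of $\phi(t)$: one verifies that $g(t)\phi(t) \to 0$ as $\abs{t} \to \infty$ for a.e.\ choice of $\vct{x}_{-i}$, since otherwise $\int \abs{g'(t)} \phi(t) \idiff{t}$ would fail to be finite on a non-null set of $\vct{x}_{-i}$. The cleanest way to sidestep subtle regularity issues is to first prove the identity for smooth, compactly supported $f$ (where all manipulations are transparent) and then extend by a density/approximation argument using the hypothesized integrability.
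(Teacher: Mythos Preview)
Your proposal is correct and is precisely the ``basic calculus'' derivation the paper alludes to; the paper does not actually write out a proof of this fact, merely citing \cite[Lem.~1.1.1]{NP12:Normal-Approximations} and remarking that it follows from elementary calculus. Your coordinate-by-coordinate application of the Stein identity $\phi'(x) = -x\phi(x)$ together with one-dimensional integration by parts is exactly the standard argument behind that citation.
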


We also use a well-known formula for the derivative of a matrix power~\cite[Sec.~X.4]{Bha97:Matrix-Analysis}.

\begin{fact}[Derivative of a Matrix Power] \label{fact:matrix-power}
Let $\mtx{A} : \R \to \mathbb{M}_d$ be a differentiable function.
For each integer $\varrho \geq 1$,
\begin{equation} \label{eqn:d-power}
\frac{\diff{}}{\diff{u}} \left( \mtx{A}(u)^{\varrho} \right)
	= \sum\nolimits_{k=0}^{\varrho-1} \mtx{A}(u)^\varrho \cdot \frac{\diff{}}{\diff{u}} \mtx{A}(u) \cdot \mtx{A}(u)^{\varrho-1-k}.
\end{equation}
In particular,
\begin{equation} \label{eqn:d-trace-power}
\frac{\diff{}}{\diff{u}} \trace \mtx{A}(u)^\varrho 
	= \varrho \cdot \trace\left[ \mtx{A}(u)^{\varrho-1} \cdot \frac{\diff{}}{\diff{u}} \mtx{A}(u) \right].
\end{equation}
The symbol $\cdot$ refers to ordinary matrix multiplication.  
\end{fact}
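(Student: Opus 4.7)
The plan is to prove the product formula \eqref{eqn:d-power} by induction on $\varrho$, and then deduce the trace formula \eqref{eqn:d-trace-power} by taking traces and using cyclic invariance. The only analytic input needed is the ordinary product rule for matrix-valued differentiable functions: for differentiable $\mtx{B}, \mtx{C} : \R \to \mathbb{M}_d$,
$$
\frac{\diff{}}{\diff{u}} \bigl( \mtx{B}(u) \mtx{C}(u) \bigr)
  = \frac{\diff{}}{\diff{u}}\mtx{B}(u) \cdot \mtx{C}(u) + \mtx{B}(u) \cdot \frac{\diff{}}{\diff{u}}\mtx{C}(u).
$$
This is immediate by differentiating entrywise, since each entry of $\mtx{B}\mtx{C}$ is a finite sum of products of entries of $\mtx{B}$ and $\mtx{C}$ and the scalar product rule applies term by term. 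Note that one must be careful to preserve the order of factors, because matrix multiplication is noncommutative.

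For the induction, drop the argument $u$ to lighten notation and write $\mtx{A}' := \tfrac{\diff{}}{\diff{u}}\mtx{A}$. The base case $\varrho = 1$ is the trivial identity $\tfrac{\diff{}}{\diff{u}}\mtx{A} = \mtx{A}'$, which corresponds to the singleton sum at $k = 0$ (with the convention $\mtx{A}^0 = \Id$). Assuming \eqref{eqn:d-power} holds for some $\varrho \geq 1$, I would apply the product rule to $\mtx{A}^{\varrho+1} = \mtx{A}^\varrho \cdot \mtx{A}$, obtaining
$$
\frac{\diff{}}{\diff{u}} \mtx{A}^{\varrho+1}
  = \left( \sum\nolimits_{k=0}^{\varrho-1} \mtx{A}^k \mtx{A}' \mtx{A}^{\varrho-1-k} \right) \mtx{A}
    + \mtx{A}^\varrho \mtx{A}'.
$$
Distributing the trailing $\mtx{A}$ into the sum gives $\sum_{k=0}^{\varrho-1} \mtx{A}^k \mtx{A}' \mtx{A}^{\varrho-k}$, and the extra term $\mtx{A}^\varrho \mtx{A}'$ is precisely the $k=\varrho$ term of the shifted sum. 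Reindexing, this is exactly the claimed identity at exponent $\varrho+1$, completing the induction. (This also fixes the obvious typo in the statement: the $k$th summand is $\mtx{A}^k \mtx{A}' \mtx{A}^{\varrho-1-k}$, not $\mtx{A}^\varrho \mtx{A}' \mtx{A}^{\varrho-1-k}$.)

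For the trace identity \eqref{eqn:d-trace-power}, take the trace of \eqref{eqn:d-power}. The linearity of the trace yields
$$
\frac{\diff{}}{\diff{u}} \trace \mtx{A}^\varrho
  = \sum\nolimits_{k=0}^{\varrho-1} \trace\bigl[ \mtx{A}^k \mtx{A}' \mtx{A}^{\varrho-1-k} \bigr].
$$
By cyclic invariance of the trace, each summand equals $\trace[\mtx{A}^{\varrho-1} \mtx{A}']$, independent of $k$. There are $\varrho$ such terms, so the sum collapses to $\varrho \cdot \trace[\mtx{A}^{\varrho-1} \mtx{A}']$, as claimed. There is no real obstacle here: the argument is purely mechanical, and the only subtle point is to preserve operator ordering during the induction, since one cannot simply write $\tfrac{\diff{}}{\diff{u}} \mtx{A}^\varrho = \varrho \mtx{A}^{\varrho-1} \mtx{A}'$ outside the trace.
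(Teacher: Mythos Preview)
Your proof is correct and is the standard elementary argument. The paper does not actually prove this statement: it is recorded as a \emph{Fact} with a citation to~\cite[Sec.~X.4]{Bha97:Matrix-Analysis}, so there is nothing to compare against beyond noting that your induction via the matrix product rule and the subsequent appeal to cyclic invariance of the trace are exactly what one finds in that reference. You also correctly caught the typographical slip in~\eqref{eqn:d-power}, where the leading factor should be $\mtx{A}(u)^{k}$ rather than $\mtx{A}(u)^{\varrho}$.
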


\subsection{Proof of Lemma~\ref{lem:trace-moments}}

Let us treat the random matrix $\mtx{X}$ as a matrix-valued function of the
standard normal vector $\vct{\gamma} := (\gamma_1, \dots, \gamma_n)$.
That is,
$$
\mtx{X} = \mtx{X}(\vct{\gamma}) = \sum\nolimits_{i=1}^n \gamma_i \mtx{H}_i.
$$
Write $\mtx{X} = \mtx{X} \cdot \mtx{X}^{2p-1}$ and distribute the sum in the first
factor:
$$
\Expect \trace \mtx{X}^{2p}
	= \Expect \trace\left[ \left(\sum\nolimits_{i=1}^n \gamma_i \mtx{H}_i \right) \mtx{X}^{2p-1} \right]
	= \sum\nolimits_{i=1}^n \Expect\big[ \gamma_i \cdot \trace \big[ \mtx{H}_i \mtx{X}^{2p - 1} \big] \big]
$$
The Gaussian integration by parts formula, Fact~\ref{fact:gauss-ibp}, implies that
$$
\Expect \trace \mtx{X}^{2p}
	= \sum\nolimits_{i=1}^n \Expect \trace\big[ \mtx{H}_i \cdot \partial_i \big(\mtx{X}^{2p-1} \big) \big].
$$
Since $\partial_i \mtx{X} = \mtx{H}_i$,
the derivative formula~\eqref{eqn:d-power} yields
$$
\Expect \trace \mtx{X}^{2p}
	= \sum\nolimits_{i=1}^n \trace\left[ \mtx{H}_i \cdot \sum\nolimits_{q=0}^{2p-2} \mtx{X}^q \mtx{H}_i \mtx{X}^{2p-2-q} \right]
	= \sum\nolimits_{q=0}^{2p-2} \sum\nolimits_{i=1}^n \trace\big[ \mtx{H}_i \mtx{X}^q \mtx{H}_i \mtx{X}^{2p-2-q} \big].
$$
This completes the proof of the formula~\eqref{eqn:gauss-trace-moments}.

\section{A Short Proof of the Matrix Khintchine Inequality}
\label{sec:khintchine}

Historically, proofs of the matrix Khintchine inequality have been rather complicated,
but the result is actually an immediate consequence of Lemma~\ref{lem:trace-moments}.
We will present this argument in detail because it has not appeared in the literature.
Furthermore, the approach serves as a template for the more sophisticated theorems
that are the main contributions of this paper.
Let us restate Proposition~\ref{prop:khintchine} in the form that we will establish it.

\begin{proposition}[Matrix Khintchine] \label{prop:khintchine-body}
Let $\mtx{X} := \sum_{i=1}^n \gamma_i \mtx{H}_i$ be an Hermitian matrix Gaussian series,
as in~\eqref{eqn:matrix-gauss-series}.
Define the matrix variance and standard deviation parameters
\begin{equation} \label{eqn:kbody-var}
\mtx{V} := \mathbf{Var}(\mtx{X}) = \sum\nolimits_{i=1}^n \mtx{H}_i^2
\quad\text{and}\quad
\sigma_{2q} := \left( \trace \mtx{V}^q \right)^{1/(2q)}
\quad\text{for each $q \geq 1$.}
\end{equation}
Then, for each integer $p \geq 1$,
\begin{equation} \label{eqn:kbody-ineq}
\left( \Expect \trace \mtx{X}^{2p} \right)^{1/(2p)}
	\leq \sqrt{2p-1} \cdot \sigma_{2p}.
\end{equation}
\end{proposition}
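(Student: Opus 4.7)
The plan is to feed the trace moment identity of Lemma~\ref{lem:trace-moments} into a H\"older-type bound and solve the resulting fixed-point inequality for $\Expect\trace\mtx{X}^{2p}$. The starting point is
\begin{equation*}
\Expect\trace\mtx{X}^{2p} \ =\ \sum_{q=0}^{2p-2}\sum_{i=1}^n \Expect\trace\bigl[\mtx{H}_i\mtx{X}^q\mtx{H}_i\mtx{X}^{2p-2-q}\bigr],
\end{equation*}
which expresses the $2p$-th moment as a sum of $2p-1$ inner expressions indexed by $q$. The target pointwise estimate, valid uniformly in $q\in\{0,1,\ldots,2p-2\}$, is
\begin{equation*}
\sum_{i=1}^n \trace\bigl[\mtx{H}_i\mtx{X}^q\mtx{H}_i\mtx{X}^{2p-2-q}\bigr] \ \leq\ (\trace\mtx{V}^p)^{1/p}\cdot(\trace\mtx{X}^{2p})^{(p-1)/p}.
\end{equation*}
Summing over the $2p-1$ values of $q$ then produces the leading factor $(2p-1)$.

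To prove the pointwise bound, I would view the sum over $i$ as a single trace, $\trace[\Phi(\mtx{X}^q)\,\mtx{X}^{2p-2-q}]$, where $\Phi(\mtx{M}) := \sum_i \mtx{H}_i\mtx{M}\mtx{H}_i$ is the completely positive map determined by the coefficients; in particular $\Phi(\Id) = \mtx{V}$, so $\mtx{V}$ plays the role of a norming constant for $\Phi$. Then apply H\"older for Schatten norms with exponents $\tfrac{2p}{q+2}$ and $\tfrac{2p}{2p-2-q}$ so that the $\mtx{X}$-side collapses exactly to $(\trace\mtx{X}^{2p})^{(2p-2-q)/(2p)}$, and control the residual factor $\pnorm{2p/(q+2)}{\Phi(\mtx{X}^q)}$ by $(\trace\mtx{V}^p)^{1/p}\cdot(\trace\mtx{X}^{2p})^{q/(2p)}$. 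The latter is a H\"older inequality for the CP map $\Phi$ that interpolates between $\Phi(\Id) = \mtx{V}$ and powers of $\mtx{X}$; it can be obtained either via operator-Jensen for $\mtx{M}\mapsto\mtx{M}^r$ after normalizing $\Phi$ to be unital, or by a direct block-matrix Cauchy--Schwarz using the representation $\Phi(\mtx{M}) = \mtx{C}\,\mathrm{diag}(\mtx{M},\ldots,\mtx{M})\,\mtx{C}^\adj$, where $\mtx{C} := [\mtx{H}_1,\ldots,\mtx{H}_n]$ satisfies $\mtx{C}\mtx{C}^\adj = \mtx{V}$.

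Once the pointwise bound is in hand, summing in $q$ and taking expectation yields
\begin{equation*}
\Expect\trace\mtx{X}^{2p} \ \leq\ (2p-1)(\trace\mtx{V}^p)^{1/p}\cdot\Expect\bigl[(\trace\mtx{X}^{2p})^{(p-1)/p}\bigr].
\end{equation*}
Since $t\mapsto t^{(p-1)/p}$ is concave for $p\geq 1$, Jensen's inequality pulls the expectation inside, giving
\begin{equation*}
\Expect\trace\mtx{X}^{2p} \ \leq\ (2p-1)(\trace\mtx{V}^p)^{1/p}\cdot(\Expect\trace\mtx{X}^{2p})^{(p-1)/p}.
\end{equation*}
Dividing by $(\Expect\trace\mtx{X}^{2p})^{(p-1)/p}$ gives $(\Expect\trace\mtx{X}^{2p})^{1/p} \leq (2p-1)(\trace\mtx{V}^p)^{1/p}$, and extracting the $(2p)$-th root delivers the target $(\Expect\trace\mtx{X}^{2p})^{1/(2p)} \leq \sqrt{2p-1}\cdot\sigma_{2p}$.

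The main obstacle is the pointwise H\"older step. The most transparent four-matrix H\"older gives $|\trace[\mtx{H}_i\mtx{X}^q\mtx{H}_i\mtx{X}^{2p-2-q}]| \leq \pnorm{2p}{\mtx{H}_i}^2\cdot(\trace\mtx{X}^{2p})^{(p-1)/p}$; but summing $\pnorm{2p}{\mtx{H}_i}^2 = \pnorm{p}{\mtx{H}_i^2}$ over $i$ only yields $\sum_i\pnorm{p}{\mtx{H}_i^2}$, which by the Schatten triangle inequality merely \emph{upper-bounds} $\pnorm{p}{\mtx{V}} = (\trace\mtx{V}^p)^{1/p}$---the wrong direction. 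The fix is to perform the sum over $i$ \emph{inside} the trace before invoking H\"older, treating $\Phi$ as a single completely positive map whose norming constant is $\mtx{V}$. Setting up this CP-map H\"older inequality so as to recover the factor $(\trace\mtx{V}^p)^{1/p}$ with exactly the right constant is the technical heart of the argument.
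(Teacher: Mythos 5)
Your high-level architecture matches the paper exactly: start from the trace moment identity of Lemma~\ref{lem:trace-moments}, reduce each of the $2p-1$ summands to a quantity involving $\trace\mtx{V}^p$ and $\trace\mtx{X}^{2p}$, pull the expectation inside a power via Lyapunov/Jensen, and solve the resulting inequality for $E:=(\Expect\trace\mtx{X}^{2p})^{1/(2p)}$. The closing algebra (cancel $E^{2(p-1)}$, take a square root) is also identical. You also correctly diagnose that a na{\"i}ve term-by-term four-matrix H{\"o}lder in $i$ goes the wrong direction after summing, and that the sum over $i$ must be absorbed into a single quantity (which is exactly where $\mtx{V}=\sum_i\mtx{H}_i^2$ enters). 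So far this parallels the paper.

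Where you diverge is in how each summand is estimated, and this is where a real gap remains. You propose, for each fixed $q\in\{0,\dots,2p-2\}$, a $q$-dependent Schatten H{\"o}lder step that requires the completely-positive-map inequality $\pnorm{2p/(q+2)}{\Phi(\mtx{X}^q)}\le\pnorm{p}{\mtx{V}}\cdot\pnorm{2p}{\mtx{X}}^q$. You acknowledge this is the ``technical heart'' and gesture at two possible proofs (operator Jensen after normalizing $\Phi$, or a block Cauchy--Schwarz) without carrying either out. Neither is routine: operator convexity of $\mtx{M}\mapsto\abs{\mtx{M}}^r$ fails for $r>2$, and for odd $q$ the argument $\mtx{X}^q$ is an indefinite Hermitian matrix, so neither a direct Cauchy--Schwarz factorization of $\Phi(\mtx{X}^q)$ nor a straightforward interpolation in powers of $\mtx{X}$ applies without extra work. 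As written, the proof has a hole precisely at the step that carries all the weight.

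The paper dissolves the $q$-dependence before any H{\"o}lder step by invoking Proposition~\ref{prop:matrix-heinz} with $s=0$ and $r=2p-2$, which gives the pointwise trace inequality $\trace[\mtx{H}_i\mtx{X}^q\mtx{H}_i\mtx{X}^{2p-2-q}]\le\trace[\mtx{H}_i^2\mtx{X}^{2p-2}]$ uniformly in $q$. This is a lightweight result: it comes from the scalar Heinz-type estimate~\eqref{eqn:heinz-ineq} applied in an eigenbasis of $\mtx{X}$, using only that $\trace[\mtx{H}\mathbf{E}_{ii}\mtx{H}\mathbf{E}_{jj}]=\abssq{\smash{h_{ij}}}\ge0$. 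After this balancing step there is only one term to bound; summing in $i$ reveals $\mtx{V}$, and then a single application of trace H{\"o}lder~\eqref{eqn:holder} with $\varrho=p$ gives exactly your target $(\trace\mtx{V}^p)^{1/p}(\trace\mtx{X}^{2p})^{(p-1)/p}$. In other words, the estimate you want \emph{is} true, and the paper's route to it is an elementary eigenvalue computation rather than a CP-map H{\"o}lder/interpolation lemma. If you want to complete your proposal without reproducing Proposition~\ref{prop:matrix-heinz}, you would need to actually prove the CP-map H{\"o}lder (most likely by a Stein--Weiss/Hadamard three-lines interpolation, carefully handling the sign of $\mtx{X}^q$), which is substantially more machinery than the problem requires and is not self-evidently less work than proving the theorem itself.

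One minor remark: the balancing step via Proposition~\ref{prop:matrix-heinz} is not just a convenience here --- the same proposition with $s=1$ is reused in the proof of Theorem~\ref{thm:k2-body}, so the paper's approach is deliberately designed to extend to the second-order bound. Your $q$-dependent H{\"o}lder, even if made rigorous, would not feed as naturally into that refinement.
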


\noindent
The short proof of Proposition~\ref{prop:khintchine-body} appears in the next two sections.
The approach parallels the exchangeable pairs method
that has been used to establish the matrix Khintchine inequality
for Rademacher series~\cite[Cor.~7.3]{MJCFT14:Matrix-Concentration}.
Here, we replace exchangeable pairs with the conceptually
simpler argument based on Gaussian integration by parts.
To reach the statement of Proposition~\ref{prop:khintchine},
we simply rewrite the trace in terms of a Schatten norm.

\begin{remark}[Noninteger Moments]
Our proof of Proposition~\ref{prop:khintchine-body} can
be adapted to obtain moment bounds for all $p \geq 2$.
See~\cite[Cor.~7.3]{MJCFT14:Matrix-Concentration} for
a closely related argument.
\end{remark}

\subsection{Preliminaries}

The main idea in the proof is to simplify the trace moment identity~\eqref{eqn:gauss-trace-moments}
with an elementary matrix inequality.  Anticipating subsequent arguments, we state
the inequality in greater generality than we need right now.

\begin{proposition} \label{prop:matrix-heinz}
Suppose that $\mtx{H}$ and $\mtx{A}$ are Hermitian matrices of the same size.  Let $q$ and $r$
be integers that satisfy $0 \leq q \leq r$.  For each real number $s$ in the range $0 \leq s \leq \min\{q, r-q\}$,
$$
\trace\big[ \mtx{HA}^q \mtx{HA}^{r-q} \big]
	\leq \trace \big[ \mtx{H} \abs{\mtx{A}}^s \mtx{H} \abs{\mtx{A}}^{r-s} \big].
$$
\end{proposition}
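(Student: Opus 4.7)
The plan is to pass to the spectral decomposition of $\mtx{A}$ and reduce the claim to a two-step scalar comparison: first establish the bound at the endpoint $s = \min\{q,r-q\}$ by a direct sign argument, then extend to the full interval $[0,\min\{q,r-q\}]$ by convexity of the map $s \mapsto \trace[\mtx{H}\abs{\mtx{A}}^s\mtx{H}\abs{\mtx{A}}^{r-s}]$. Write $\mtx{A} = \mtx{U}\mtx{\Lambda}\mtx{U}^\adj$ with real eigenvalues $\lambda_1,\dots,\lambda_d$, and set $\tilde{\mtx{H}} := \mtx{U}^\adj \mtx{H} \mtx{U}$. Expanding both sides in this basis gives
$$
\trace\bigl[\mtx{H}\mtx{A}^q\mtx{H}\mtx{A}^{r-q}\bigr] = \sum_{i,j} \abssq{\tilde{h}_{ij}}\, \lambda_i^{r-q} \lambda_j^q, \qquad \trace\bigl[\mtx{H}\abs{\mtx{A}}^s\mtx{H}\abs{\mtx{A}}^{r-s}\bigr] = \sum_{i,j} \abssq{\tilde{h}_{ij}}\, \abs{\lambda_i}^{r-s} \abs{\lambda_j}^s.
$$
Cyclicity of the trace makes each side invariant under $q \leftrightarrow r-q$ and $s \leftrightarrow r-s$, so without loss of generality I assume $q \le r-q$, whence $\min\{q,r-q\} = q$.

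For the endpoint $s = q$, I pair each off-diagonal summand $(i,j)$ with its mirror $(j,i)$; they carry equal weights because $\abs{\tilde{h}_{ij}} = \abs{\tilde{h}_{ji}}$ by Hermiticity of $\mtx{H}$. Writing $\lambda_k = \epsilon_k \abs{\lambda_k}$ with $\epsilon_k \in \{\pm 1\}$, the paired left-hand contribution equals
$$
\epsilon_i^{r-q}\epsilon_j^{q}\,\abs{\lambda_i}^{r-q}\abs{\lambda_j}^{q} + \epsilon_j^{r-q}\epsilon_i^{q}\,\abs{\lambda_j}^{r-q}\abs{\lambda_i}^{q}.
$$
Since both sign prefactors have modulus one, the triangle inequality dominates this by $\abs{\lambda_i}^{r-q}\abs{\lambda_j}^{q} + \abs{\lambda_j}^{r-q}\abs{\lambda_i}^{q}$, which is exactly the corresponding right-hand pair; the diagonal contribution $\lambda_i^r \le \abs{\lambda_i}^r$ is immediate. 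Summing over all pairs yields $\trace[\mtx{H}\mtx{A}^q\mtx{H}\mtx{A}^{r-q}] \le \trace[\mtx{H}\abs{\mtx{A}}^q\mtx{H}\abs{\mtx{A}}^{r-q}]$.

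To extend this bound to arbitrary $s \in [0,q]$, define $f(s) := \trace[\mtx{H}\abs{\mtx{A}}^s\mtx{H}\abs{\mtx{A}}^{r-s}] = \sum_{i,j} \abssq{\tilde{h}_{ij}}\, d_i^s d_j^{r-s}$ with $d_k := \abs{\lambda_k}$. Each summand $s \mapsto d_i^s d_j^{r-s}$ is a positive exponential in $s$, hence convex, and the coefficients are nonnegative, so $f$ is convex on $\R$. Relabeling $i \leftrightarrow j$ gives $f(s) = f(r-s)$, and a convex function symmetric about $r/2$ is nonincreasing on $[0,r/2]$. Because $q \le r/2$, this delivers $f(s) \ge f(q)$ for every $s \in [0,q]$; chaining with the endpoint bound completes the proof. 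The main obstacle is essentially bookkeeping---the sign reduction is a one-line triangle inequality and the convexity-plus-symmetry argument is routine---though one should interpret $d_k^0$ as $0$ when $d_k = 0$ (equivalently, $\abs{\mtx{A}}^0$ as the projection onto $\range(\mtx{A})$) to keep $f$ continuous at the endpoint $s = 0$.
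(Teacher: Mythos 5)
Your argument is correct and matches the paper's proof in essence: both diagonalize $\mtx{A}$, symmetrize over $(i,j)\leftrightarrow(j,i)$, kill the signs of the eigenvalues with the triangle inequality, and then use convexity of $\theta\mapsto\alpha^\theta\beta^{1-\theta}+\alpha^{1-\theta}\beta^\theta$ together with its symmetry about $\theta=\tfrac12$ to lower the exponent from $q$ to $s$ (the paper simply packages these last two steps into one scalar inequality recorded before the proof). Your closing caveat about $d_k^0$ is unnecessary: under the standard convention $\abs{\mtx{A}}^0=\Id$ the right-hand side only increases at $s=0$, so the claim holds without redefining the zeroth power.
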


The proof of Proposition~\ref{prop:matrix-heinz} depends on a numerical fact.
For nonnegative numbers $\alpha$ and $\beta$, the function
$\theta \mapsto \alpha^\theta \beta^{1-\theta} + \alpha^{1-\theta} \beta^\theta$
is convex on the interval $[0,1]$, and it achieves its minimum at $\theta = \half$.  Therefore,
\begin{equation} \label{eqn:heinz-ineq}
\alpha^\theta \beta^{1-\theta} + \alpha^{1-\theta} \beta^\theta
	\leq \alpha^{\theta'} \beta^{1-\theta'} + \alpha^{1-\theta'} \beta^{\theta'}
	\quad\text{when $0 \leq \theta' \leq \min\big\{\theta, 1 - \theta \big\}$.}
\end{equation}
We need to lift this scalar inequality to matrices.

\begin{proof} Without loss of generality, we may change coordinates so that $\mtx{A}$ is diagonal:
$\mtx{A} = \sum\nolimits_i a_i \mathbf{E}_{ii}$.
Expanding both copies of $\mtx{A}$,
$$
\trace\big[ \mtx{HA}^q \mtx{HA}^{r-q} \big]
	= \sum\nolimits_{i,j} a_i^q a_j^{r-q} \cdot \trace \big[ \mtx{H} \mathbf{E}_{ii} \mtx{H} \mathbf{E}_{jj} \big]
	= \sum\nolimits_{i,j} \half \big(a_i^q a_j^{r-q} + a_i^{r-q} a_j^{q} \big) \cdot
	\trace \big[ \mtx{H} \mathbf{E}_{ii} \mtx{H} \mathbf{E}_{jj} \big].
$$
After we take absolute values, the inequality~\eqref{eqn:heinz-ineq} implies that
$$
a_i^q a_j^{r-q} + a_i^{r-q} a_j^{q}
	\leq \abs{a_i}^s \abs{\smash{a_j}}^{r-s} + \abs{a_i}^{r-s} \abs{\smash{a_j}}^{s}.
$$
The remaining trace is nonnegative:
$\trace \big[ \mtx{H} \mathbf{E}_{ii} \mtx{H} \mathbf{E}_{jj} \big] = \abssq{ \smash{h_{ij}} }$,
where $h_{ij}$ are the components of the matrix $\mtx{H}$.  As a consequence,
$$
\trace\big[ \mtx{HA}^q \mtx{HA}^{r-q} \big]
	\leq \sum\nolimits_{i,j} \half \big( \abs{a_i}^{s} \abs{\smash{a_j}}^{r-s} + \abs{a_i}^{r-s} \abs{\smash{a_j}}^{s} \big)
	\cdot \trace \big[ \mtx{H} \mathbf{E}_{ii} \mtx{H} \mathbf{E}_{jj} \big]
	= \trace \big[ \mtx{H} \abs{\mtx{A}}^{s} \mtx{H} \abs{\mtx{A}}^{r-s} \big].
$$
To reach the last identity, we reversed our steps to reassemble the sum into a trace.
\end{proof}

\subsection{Proof of the Matrix Khintchine Inequality} \label{sec:pf-matrix-khintchine}

We may now establish Proposition~\ref{prop:khintchine-body}.
Let us introduce notation for the quantity of interest:
$$
E^{2p} := \Expect \trace \mtx{X}^{2p}.
$$
Use the integration by parts result, Lemma~\ref{lem:trace-moments}, to rewrite the trace moment:
$$
E^{2p} = \sum\nolimits_{q=0}^{2p-2} \sum\nolimits_{i=1}^n
	\Expect \trace \big[ \mtx{H}_i \mtx{X}^q \mtx{H}_i \mtx{X}^{2p-2-q} \big].
$$
For each choice of $q$, apply the matrix inequality from Proposition~\ref{prop:matrix-heinz}
with $r = 2p-2$ and $s = 0$ to reach
$$
E^{2p} \leq (2p-1) \sum\nolimits_{i=1}^n \Expect \trace\big[ \mtx{H}_i^2 \mtx{X}^{2(p-1)} \big]
	= (2p - 1) \cdot \Expect \trace\big[ \mtx{V} \mtx{X}^{2(p-1)} \big]
$$
We have identified the matrix variance $\mtx{V}$ defined in~\eqref{eqn:kbody-var}.

Next, let us identify a copy of $E$ on the right-hand side
and solve the resulting algebraic inequality.
To that end, invoke H{\"o}lder's inequality~\eqref{eqn:holder}
for the trace with $\varrho = p$ and $\varrho' = p/(p-1)$:
\begin{align*}
E^{2p} &\leq (2p - 1) \cdot \left( \trace \mtx{V}^p \right)^{1/p} \cdot
	 \Expect \left( \trace \mtx{X}^{2p} \right)^{(p-1)/p} \\
	 &\leq (2p - 1) \cdot \sigma_{2p}^2 \cdot
	 \left( \Expect \trace \mtx{X}^{2p} \right)^{(p-1)/p}
	 = (2p - 1) \cdot \sigma_{2p}^2 \cdot E^{2(p - 1)}.
\end{align*}
We have identified the quantity $\sigma_{2p}$ from~\eqref{eqn:kbody-var}.
The second inequality is Lyapunov's.
Since the unknown $E$ is nonnegative, we can solve the polynomial inequality to reach
$$
E \leq \sqrt{2p-1} \cdot \sigma_{2p}.
$$
This is the required result.

\section{A Second-Order Matrix Khintchine Inequality}
\label{sec:k2}

In this Section, we prove Theorem~\ref{thm:k2-intro}, the
second-order matrix Khintchine inequality.  Let
us restate the result in the form that we will establish it.

\begin{theorem}[Second-Order Matrix Khintchine]  \label{thm:k2-body}
Let $\mtx{X} = \sum_{i=1}^n \gamma_i \mtx{H}_i$ be an Hermitian matrix Gaussian series,
as in~\eqref{eqn:matrix-gauss-series}.  Define the matrix
variance and standard deviation parameter
\begin{equation} \label{eqn:k2-body-variances}
\mtx{V} := \mathbf{Var}(\mtx{X}) = \sum\nolimits_{i=1}^n \mtx{H}_i^2
\quad\text{and}\quad
\sigma_{2p} := \left( \trace \mtx{V}^p \right)^{1/(2p)}
\quad\text{for $p \geq 1$.}
\end{equation}
Define the matrix alignment parameter
\begin{equation} \label{eqn:k2-body-alignment}
w_{2p} := \max_{\mtx{Q}_\ell} \ \left( \trace \abs{
	\sum\nolimits_{i,j=1}^n \mtx{H}_i \mtx{Q}_1 \mtx{H}_j \mtx{Q}_2 \mtx{H}_i \mtx{Q}_3 \mtx{H}_j }^{p/2} \right)^{1/(2p)}
	\quad\text{for $p \geq 1$}
\end{equation}
where the maximum ranges over a triple $(\mtx{Q}_1, \mtx{Q}_2, \mtx{Q}_3)$ of unitary matrices.
Then, for each integer $p \geq 3$,
\begin{equation} \label{eqn:k2-body-ineq}
\left( \Expect \trace \mtx{X}^{2p} \right)^{1/(2p)}
	\leq 3 \sqrt[4]{2p-5} \cdot \sigma_{2p} + \sqrt{2p-4} \cdot w_{2p}.
\end{equation}
\end{theorem}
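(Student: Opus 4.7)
The plan is to extend the Gaussian integration-by-parts argument of Section~\ref{sec:pf-matrix-khintchine} by one further step. Write $E^{2p} := \Expect \trace \mtx{X}^{2p}$. Lemma~\ref{lem:trace-moments} gives
\[
E^{2p} \;=\; \sum_{q=0}^{2p-2} \sum_{i=1}^n \Expect \trace\bigl[\mtx{H}_i \mtx{X}^q \mtx{H}_i \mtx{X}^{2p-2-q}\bigr].
\]
Rather than compressing immediately via Proposition~\ref{prop:matrix-heinz} with $s=0$ (which would only reproduce the first-order bound), I extract a further factor $\mtx{X} = \sum_j \gamma_j \mtx{H}_j$ from one of the inner powers $\mtx{X}^q$ or $\mtx{X}^{2p-2-q}$ and apply Gaussian integration by parts (Fact~\ref{fact:gauss-ibp}) a second time. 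Expanding the resulting derivatives via the matrix-power rule (Fact~\ref{fact:matrix-power}) produces a sum of traces in which each summand contains four coefficient matrices --- two indexed by $i$ and two by $j$ --- separated by powers of $\mtx{X}$ of total exponent $2p-4$.

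After the second integration by parts, every trace belongs to one of two structural families, distinguished by the cyclic interleaving of the four Hermitian factors. The \emph{parallel} family consists of traces whose cyclic pattern is $\mtx{H}_i \mtx{H}_j \cdots \mtx{H}_j \cdots \mtx{H}_i$; that is, the two $\mtx{H}_j$'s are one-step adjacent and, after cyclic rotation, so are the two $\mtx{H}_i$'s. Applying Proposition~\ref{prop:matrix-heinz} with $s=0$ (and, where needed, an iterated variant that treats one pair while the other is frozen) collapses each such trace onto a multiple of $\trace\bigl[\mtx{V}^2\mtx{X}^{2p-4}\bigr]$, because $\sum_j \mtx{H}_j^2 = \mtx{V}$ for each pair. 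The \emph{alternating} family consists of traces with cyclic pattern $\mtx{H}_i \cdots \mtx{H}_j \cdots \mtx{H}_i \cdots \mtx{H}_j$: three interior powers of $\mtx{X}$ separate consecutive Hermitian factors, plus a fourth outer power. This is precisely the structure appearing under the norm in the definition of the matrix alignment parameter $w_{2p}$.

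To control the alternating contribution, I fix the realization of $\mtx{X}$ and use its spectral decomposition to factor each interior power as $\mtx{X}^{a_\ell} = \mtx{Q}_\ell(\mtx{X}) \cdot \abs{\mtx{X}}^{a_\ell}$, where $\mtx{Q}_\ell(\mtx{X})$ is the $\mtx{X}$-measurable unitary defined as the spectral sign of $\mtx{X}$ raised to the power $a_\ell$. Cyclic rearrangement brings the positive factors together into a single $\abs{\mtx{X}}^{2p-4}$, while the three unitaries $\mtx{Q}_1, \mtx{Q}_2, \mtx{Q}_3$ sit between the four Hermitian coefficients exactly as in the alignment parameter. Writing $\mtx{M}(\mtx{Q}) := \sum_{i,j}\mtx{H}_i\mtx{Q}_1\mtx{H}_j\mtx{Q}_2\mtx{H}_i\mtx{Q}_3\mtx{H}_j$, the trace H\"older inequality~\eqref{eqn:holder} with exponents $\varrho = p/2$ and $\varrho' = p/(p-2)$, followed by Jensen's inequality on the concave function $x \mapsto x^{(p-2)/p}$, gives
\[
\Expect\,\abs{\trace\bigl[\mtx{M}(\mtx{Q}(\mtx{X}))\cdot\abs{\mtx{X}}^{2p-4}\bigr]}
\;\le\; \Expect\bigl[\smnorm{p/2}{\mtx{M}(\mtx{Q}(\mtx{X}))}\cdot\smnorm{p/(p-2)}{\abs{\mtx{X}}^{2p-4}}\bigr]
\;\le\; w_{2p}^{4}\cdot E^{2p-4}.
\]
The final step uses that the supremum in the definition of $w_{2p}$ ranges over all unitary triples and therefore dominates $\smnorm{p/2}{\mtx{M}(\mtx{Q}(\mtx{X}))}$ pointwise. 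The same H\"older step applied to the parallel family gives a bound of $\smnorm{p/2}{\mtx{V}^2}\cdot E^{2p-4} = \sigma_{2p}^{4}\cdot E^{2p-4}$.

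Combining the two families --- and tracking the number of arrangements that the double integration by parts produces after the Proposition~\ref{prop:matrix-heinz} compressions --- should yield the master polynomial inequality
\[
E^{2p} \;\le\; 81(2p-5)\,\sigma_{2p}^{4}\,E^{2p-4} + (2p-4)^{2}\,w_{2p}^{4}\,E^{2p-4}.
\]
Dividing by $E^{2p-4}$ (the statement is trivial when $E = 0$), taking fourth roots, and applying the subadditivity $(a+b)^{1/4}\le a^{1/4}+b^{1/4}$ for $a,b\ge 0$ delivers the advertised bound $E \le 3\sqrt[4]{2p-5}\,\sigma_{2p} + \sqrt{2p-4}\,w_{2p}$. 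The principal obstacle will be the combinatorial bookkeeping: the second IBP produces on the order of $(2p-3)^{2}$ individual traces, each of which must be classified correctly as parallel or alternating and then compressed onto one of the two canonical forms with the exact constants $81(2p-5)$ and $(2p-4)^{2}$. In particular, the sublinear growth in the variance prefactor requires cancellations beyond a naive $O(p^{2})$ count, which I expect to come from symmetrizing the choice of Proposition~\ref{prop:matrix-heinz} exponents and exploiting the freedom to distribute which of the four $\mtx{X}$ powers plays the role of the "outer" H\"older factor.
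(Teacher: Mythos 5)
Your proposal correctly identifies the two ingredients — a second Gaussian integration by parts and a classification of the resulting traces into a ``parallel'' part (absorbed by $\sigma_{2p}$) and an ``alternating'' part (absorbed by $w_{2p}$) — but the central step has a genuine gap and the bookkeeping is organized differently from the paper's.

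The critical error is in your treatment of the alternating terms. You factor each interior power as $\mtx{X}^{a_\ell} = \mtx{Q}_\ell(\mtx{X}) \cdot \abs{\mtx{X}}^{a_\ell}$ (spectral sign times modulus) and then assert that ``cyclic rearrangement brings the positive factors together into a single $\abs{\mtx{X}}^{2p-4}$.'' That move is not available: the factors $\abs{\mtx{X}}^{a_\ell}$ are separated by the coefficient matrices $\mtx{H}_i$ and $\mtx{H}_j$, which do not commute with $\mtx{X}$, and the cyclic property of the trace only permits rotating the whole word, not reordering its letters. In the trace $\trace\bigl[\mtx{H}_i \mtx{X}^{a_1} \mtx{H}_j \mtx{X}^{a_2} \mtx{H}_i \mtx{X}^{a_3} \mtx{H}_j \mtx{X}^{a_4}\bigr]$ there is no algebraic operation that merges the four $\abs{\mtx{X}}^{a_\ell}$ into a single $\abs{\mtx{X}}^{2p-4}$. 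The paper overcomes this with a substantively different and nontrivial analytic device: a multivariate complex interpolation inequality (Proposition~\ref{prop:interpolation}, proved in Appendix~\ref{sec:interpolation} from Hadamard's Three-Lines Theorem). That interpolation result applies to a multilinear functional of matrix \emph{powers} and shows that the total exponent can be shifted into any one argument at the cost of inserting unitaries commuting with $\mtx{X}$ in the other slots. This is precisely why the definition of $w_{2p}$ carries a maximum over unitary triples: the unitaries come from interpolation, not from a pointwise spectral factorization.

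Separately, your master inequality $E^{2p} \le 81(2p-5)\sigma_{2p}^4 E^{2p-4} + (2p-4)^2 w_{2p}^4 E^{2p-4}$ is reverse-engineered to deliver the target constants but is not the one the argument produces. The paper first applies Proposition~\ref{prop:matrix-heinz} \emph{surgically} to the trace-moment identity — with the endpoint value of $s$ for the four boundary exponents $q \in \{0,1,2p-3,2p-2\}$ and with $s=2$ for the remaining $2p-5$ interior exponents — which compresses everything to the single shape $\trace[\mtx{H}_i\mtx{X}^2\mtx{H}_i\mtx{X}^{2p-4}]$ before the second integration by parts is performed. This removes the $O(p^2)$ bookkeeping you worry about and yields a recursion of the form $E^{2p} \le 4\sigma_{2p}^2\, E^{2(p-1)} + (2p-5)\bigl[\sigma_{2p}^4 + (2p-4)w_{2p}^4\bigr] E^{2(p-2)}$, which contains terms of two different lags. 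One then solves the corresponding quadratic in $E^2$ using $t^2 \le \alpha t + \beta \Rightarrow t \le \alpha + \sqrt\beta$; the constant $3$ arises as $2 + \sqrt[4]{2p-5} \le 3\sqrt[4]{2p-5}$ for $p\ge 3$, not from an $81$ in the recursion.
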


\noindent
The proof of Theorem~\ref{thm:k2-body} will occupy us for the rest of the section.
To reach the statement in the introduction, we rewrite traces in terms of Schatten
norms. 
We also provide the proof of Proposition~\ref{prop:sigma-versus-w} in Section~\ref{sec:interleave-stdev}.

\subsection{Discussion}
\label{sec:k2-discussion}

Before we establish Theorem~\ref{thm:k2-body}, let us spend a moment to discuss the
proof of this result.
Theorem~\ref{thm:k2-body} is based on the same pattern of argument as the matrix Khintchine inequality,
Proposition~\ref{prop:khintchine-body}.  This time, we apply Proposition~\ref{prop:matrix-heinz} more
surgically to control the terms in the trace moment identity from Lemma~\ref{lem:trace-moments}.
The most significant new observation is that we can use complex interpolation to reorganize the products
of matrices that arise during the calculation.

We can refine this argument in several ways.  First, if we apply complex interpolation with more care,
it is possible to define the matrix alignment parameter~\eqref{eqn:k2-body-alignment} as a 
maximum over the set
$$
\big\{ \mtx{Q}_1, \mtx{Q}_2, \mtx{Q}_3 \text{ are commuting unitaries and $\mtx{Q}_\ell = \Id$ for some $\ell$} \big\}.
$$
Given that commuting matrices are simultaneously diagonalizable, this improvement might make it easier
to bound the matrix alignment parameters.

Second, it is quite clear from the proof that we can proceed beyond the second-order terms.  For example,
for an integer $p \geq 3$, we can obtain results in terms of the third-order quantities
$$
\begin{aligned}
w_{2p,1} &:= \max_{\mtx{Q}_\ell} \ \left( \trace \abs{ \sum\nolimits_{i,j,k=1}^n
	\mtx{H}_i \mtx{Q}_1 \mtx{H}_j \mtx{Q}_2 \mtx{H}_k \mtx{Q}_3 \mtx{H}_i \mtx{Q}_4 \mtx{H}_j \mtx{Q}_5 \mtx{H}_k }^{p/3}
	\right)^{1/(2p)} \\
w_{2p,2} &:= \max_{\mtx{Q}_\ell} \ \left( \trace \abs{ \sum\nolimits_{i,j,k=1}^n
	\mtx{H}_i \mtx{Q}_1 \mtx{H}_j \mtx{Q}_2 \mtx{H}_k \mtx{Q}_3 \mtx{H}_i \mtx{Q}_4 \mtx{H}_k \mtx{Q}_5 \mtx{H}_j }^{p/3}
	\right)^{1/(2p)}.
\end{aligned}
$$
The ordering of indices is $(i,j,k,i,j,k)$ and $(i,j,k,i,k,j)$, respectively.
This refinement allows us reduce the order of coefficient on the standard deviation term $\sigma_{2p}$
in~\eqref{eqn:k2-body-ineq} to $p^{1/6}$.
Unfortunately, we must also compute both alignment parameters $w_{2p,1}$ and $w_{2p,2}$,
instead of just $w_{2p}$.  This observation shows why it is unproductive to press forward
with this approach.  Indeed, the number of orderings of indices grows super-exponentially as we consider
longer products, which is an awful prospect for applications.

\subsection{Preliminaries}

In the proof of Theorem~\ref{thm:k2-body}, we will use two interpolation results
to reorganize products of matrices.  The first one is a
type of matrix H{\"o}lder inequality~\cite[Cor.~1]{LP86:Inegalites-Khintchine}.
Here is a version of the result specialized to our setting.

\begin{fact}[Lust-Piquard] \label{fact:pisier-xu}
Consider a finite sequence $(\mtx{A}_1, \dots, \mtx{A}_n)$ of Hermitian matrices with the same dimension,
and let $\mtx{B}$ be a positive-semidefinite matrix of the same dimension.  For each number $\varrho \geq 2$,
$$
\left( \trace \left( \sum\nolimits_{i=1}^n \mtx{A}_i \mtx{B} \mtx{A}_i \right)^{\varrho/2} \right)^{2/\varrho}
	\leq \left( \trace \left( \sum\nolimits_{i=1}^n \mtx{A}_i^2 \right)^{\varrho} \right)^{1/\varrho}
	\cdot \left( \trace \mtx{B}^{\varrho} \right)^{1/\varrho}.
$$
\end{fact}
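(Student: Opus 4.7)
My plan is to prove this noncommutative H\"older-type inequality by recognizing that $\Phi(\mtx{B}) := \sum_{i=1}^n \mtx{A}_i \mtx{B} \mtx{A}_i$ is a completely positive map with $\Phi(\Id) = \sum_i \mtx{A}_i^2 =: \mtx{V}$. In Schatten-norm language the desired bound reads
\[
\pnorm{\varrho/2}{\Phi(\mtx{B})} \leq \pnorm{\varrho}{\mtx{V}} \cdot \pnorm{\varrho}{\mtx{B}},
\]
and I would establish it by verifying the endpoint cases $\varrho = 2$ and $\varrho = \infty$ and then invoking complex interpolation to fill the intermediate range.

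The endpoints are clean. For $\varrho = 2$, cyclicity of the trace gives $\trace \Phi(\mtx{B}) = \trace(\mtx{V}\mtx{B})$, and the scalar trace H\"older inequality~\eqref{eqn:holder} with $\varrho = 2$ yields $\trace(\mtx{V}\mtx{B}) \leq \pnorm{2}{\mtx{V}} \pnorm{2}{\mtx{B}}$. For $\varrho = \infty$, the complete positivity of $\Phi$ together with the operator bound $\mtx{B} \psdle \norm{\mtx{B}} \Id$ produces $\Phi(\mtx{B}) \psdle \norm{\mtx{B}} \cdot \Phi(\Id) = \norm{\mtx{B}} \cdot \mtx{V}$, whence $\norm{\Phi(\mtx{B})} \leq \norm{\mtx{V}} \norm{\mtx{B}}$.

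For intermediate $\varrho \in (2, \infty)$ I would reduce by continuity to the case of strictly positive $\mtx{V}$ and $\mtx{B}$ and then apply Stein's complex interpolation theorem on the strip $\{0 \leq \real z \leq 1\}$ to a scalar function of the form $F(z) := \trace \bigl[ \mtx{Y}^\adj \, \mtx{V}^{\alpha(z)} \, \Phi(\mtx{B}^{\beta(z)}) \, \mtx{V}^{\alpha(z)} \bigr]$, where $\mtx{Y}$ is a dual test matrix of unit Schatten norm and the analytic exponents $\alpha(z), \beta(z)$ are chosen so that the boundary $\real z = 0$ recovers the $\varrho = 2$ calculation and $\real z = 1$ recovers the $\varrho = \infty$ one. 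Hadamard's three-lines lemma then delivers the intermediate bound with the sharp product constant $\pnorm{\varrho}{\mtx{V}} \pnorm{\varrho}{\mtx{B}}$, and taking the supremum over $\mtx{Y}$ yields the claim by Schatten duality.

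The principal obstacle is designing this analytic family correctly. A naive Riesz--Thorin interpolation of the \emph{fixed} linear map $\mtx{B} \mapsto \Phi(\mtx{B})$ between $S_2 \to S_1$ and $S_\infty \to S_\infty$ only yields the weaker log-interpolated constant $\pnorm{2}{\mtx{V}}^{2/\varrho} \pnorm{\infty}{\mtx{V}}^{1 - 2/\varrho}$, which strictly dominates the target $\pnorm{\varrho}{\mtx{V}}$ since the Schatten norm is log-convex in $1/\varrho$; recovering the sharp exponent forces one to vary $\mtx{V}$ (and $\mtx{B}$) analytically in parallel with the Schatten index, which is precisely what the compensated family above accomplishes. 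An alternative route, essentially Lust-Piquard's original strategy in~\cite{LP86:Inegalites-Khintchine}, is to deduce the inequality dually from the noncommutative Khintchine inequality; either approach ultimately relies on genuine operator-theoretic interpolation rather than symbolic manipulation.
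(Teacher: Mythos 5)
The paper does not actually prove this statement: it records it as a ``Fact'' and points the reader to Pisier--Xu~\cite[Lem.~1.1]{PX97:Noncommutative-Martingale}, whose argument is based on the Hadamard Three-Lines Theorem. Your proposal therefore cannot be compared to a proof ``in the paper''; it can only be compared to the cited source, and there the strategy you outline -- check the two endpoint Schatten indices and interpolate the noncommutative H\"older inequality by an analytic-family (Stein/Hadamard) argument -- is exactly the one Pisier--Xu use. Your endpoint computations are correct: for $\varrho=2$ cyclicity of the trace together with~\eqref{eqn:holder} gives $\pnorm{1}{\Phi(\mtx{B})} = \trace(\mtx{V}\mtx{B}) \leq \pnorm{2}{\mtx{V}}\pnorm{2}{\mtx{B}}$ since $\Phi(\mtx{B})\psdge 0$, and for $\varrho=\infty$ complete positivity gives $\Phi(\mtx{B}) \psdle \norm{\mtx{B}}\mtx{V}$. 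Your diagnosis of why vanilla Riesz--Thorin applied to the \emph{fixed} map $\Phi$ falls short is also correct: that route produces the log-interpolated constant $\pnorm{2}{\mtx{V}}^{2/\varrho}\pnorm{\infty}{\mtx{V}}^{1-2/\varrho}$, which dominates $\pnorm{\varrho}{\mtx{V}}$ by log-convexity of $\varrho\mapsto\log\pnorm{1/\varrho}{\cdot}$.

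The one place your sketch is shaky is the specific analytic family $F(z)=\trace\big[\mtx{Y}^\adj\,\mtx{V}^{\alpha(z)}\,\Phi(\mtx{B}^{\beta(z)})\,\mtx{V}^{\alpha(z)}\big]$. Since $\mtx{V}=\Phi(\Id)$ is baked into the operator $\Phi$ and does not commute with $\Phi$'s arguments, sandwiching $\Phi(\mtx{B}^{\beta(z)})$ by external powers of $\mtx{V}$ does not ``vary $\mtx{V}$ in parallel with the Schatten index'' in a way that recovers $\pnorm{\varrho}{\mtx{V}}$ at the endpoints; as written, the boundary values $\real z=0,1$ do not reduce to your $\varrho=2$ and $\varrho=\infty$ computations. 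The cleaner way to set this up -- and the one closer in spirit to both Pisier--Xu and to the paper's own Appendix~\ref{sec:interpolation} -- is to dualize against a test matrix $\mtx{Y}$ of unit Schatten-$(\varrho/2)'$ norm, note $\trace[\mtx{Y}\Phi(\mtx{B})]=\trace[\Phi(\mtx{Y})\mtx{B}]$ by self-adjointness of $\Phi$, and then run the three-lines argument on $z\mapsto \trace\big[\Phi(\abs{\mtx{Y}}^{a(z)})\,\mtx{B}^{b(z)}\big]$ with affine exponents chosen so that the two boundary lines realize the endpoint H\"older pairings; here it is the arguments $\mtx{Y}$ and $\mtx{B}$ that are made analytic, not $\mtx{V}$. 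You flagged the design of the family as the ``principal obstacle,'' and that self-assessment is accurate -- the outline is right, but the displayed $F(z)$ is not yet a proof. Your alternative suggestion, deducing the inequality by duality from the noncommutative Khintchine inequality, is also a legitimate route and is closer to Lust-Piquard's original~\cite{LP86:Inegalites-Khintchine}.
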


\noindent
See~\cite[Lem.~1.1]{PX97:Noncommutative-Martingale} for a proof
based on the Hadamard Three-Lines Theorem~\cite[Prop.~9.1.1]{Gar07:Inequalities-Journey}.

The second result is a more complicated interpolation for a multilinear function
whose arguments are powers of random matrices.

\begin{proposition}[Multilinear Interpolation] \label{prop:interpolation}
Suppose that $F : (\mathbb{M}_d)^k \to \C$ is a multilinear function.
Fix nonnegative integers $\alpha_1, \dots, \alpha_k$ with $\sum_{i=1}^k \alpha_i = \alpha$.
Let $\mtx{Y}_i \in \mathbb{H}_d$ be random matrices, not necessarily independent,
for which $\Expect \norm{ \mtx{Y}_i }^\alpha < \infty$.  Then
$$
\abs{ \Expect F\big( \mtx{Y}_1^{\alpha_1}, \ \dots,\ \mtx{Y}_k^{\alpha_k} \big) }
	\leq \max_{i=1,\dots,k} \Expect
	\max_{\mtx{Q}_{\ell}} \abs{ F\big( \mtx{Q}_1, \ \dots,\ \mtx{Q}_{i-1},\ 
	\mtx{Q}_i \mtx{Y}_i^{\alpha},\ \mtx{Q}_{i+1},\ \dots,\ \mtx{Q}_k \big) }.
$$
In this expression, each $\mtx{Q}_\ell$ is a (random) unitary matrix that commutes with $\mtx{Y}_\ell$.
\end{proposition}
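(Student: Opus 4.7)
The plan is complex interpolation on a tube over a simplex, combined with a generalized Hölder inequality in the randomness. Fix a realization of $(\mtx{Y}_1,\dots,\mtx{Y}_k)$. Using the spectral decomposition, write $\mtx{Y}_i = \mtx{S}_i|\mtx{Y}_i|$ with $|\mtx{Y}_i|=(\mtx{Y}_i^2)^{1/2}$ positive semidefinite and $\mtx{S}_i$ a unitary sign matrix (extended to a unitary by setting it to $\Id$ on $\ker\mtx{Y}_i$). Since $\mtx{S}_i$ and $|\mtx{Y}_i|$ are simultaneously diagonal in the eigenbasis of $\mtx{Y}_i$, they commute, and $\mtx{Y}_i^{\alpha_i}=\mtx{S}_i^{\alpha_i}|\mtx{Y}_i|^{\alpha_i}$.

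Define the function
$$
h(z_1,\dots,z_k) := F\bigl(\mtx{S}_1^{\alpha_1}|\mtx{Y}_1|^{\alpha z_1},\ \dots,\ \mtx{S}_k^{\alpha_k}|\mtx{Y}_k|^{\alpha z_k}\bigr)
$$
on the closed complex tube $T_\Delta := \{z\in\C^k : \sum_j z_j = 1,\ 0\le \real(z_j)\le 1\}$ over the simplex, where each $|\mtx{Y}_i|^{\alpha z_i}$ is defined via spectral calculus on the positive eigenvalues (regularizing by $|\mtx{Y}_i|+\varepsilon\Id$ if necessary). By multilinearity of $F$, the function $h$ is analytic on the interior of $T_\Delta$, bounded pointwise by $\|F\|\prod_j\|\mtx{Y}_j\|^{\alpha\real(z_j)}$, and at $z_j=\alpha_j/\alpha$ it equals $F(\mtx{Y}_1^{\alpha_1},\dots,\mtx{Y}_k^{\alpha_k})$.

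The decisive step is the boundary analysis on the vertex face $\mathcal{F}_i := \{z\in T_\Delta : \real(z) = \vct{e}_i\}$. For $z\in\mathcal{F}_i$ and $j\ne i$, the $j$th slot is $\mtx{S}_j^{\alpha_j}|\mtx{Y}_j|^{i\alpha\tau_j}$, a unitary commuting with $\mtx{Y}_j$. For the $i$th slot,
$$
\mtx{S}_i^{\alpha_i}|\mtx{Y}_i|^{\alpha(1+i\tau_i)} = \bigl(\mtx{S}_i^{\alpha_i}|\mtx{Y}_i|^{i\alpha\tau_i}\mtx{S}_i^{-\alpha}\bigr)\,\mtx{Y}_i^\alpha = \mtx{Q}_i\mtx{Y}_i^\alpha,
$$
where $\mtx{Q}_i$ is again a unitary commuting with $\mtx{Y}_i$. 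Hence $\sup_{\mathcal{F}_i}|h|\le M_i := \max_{\mtx{Q}_\ell}|F(\mtx{Q}_1,\dots,\mtx{Q}_i\mtx{Y}_i^\alpha,\dots,\mtx{Q}_k)|$, the maximum taken over unitaries $\mtx{Q}_\ell$ commuting with $\mtx{Y}_\ell$.

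The conclusion follows from a multivariate Hadamard three-lines bound on $T_\Delta$: for each $x\in\Delta$,
$$
|h(x)| \ \le\ \prod_{i=1}^k M_i^{x_i}.
$$
This can be established by reducing to the one-variable three-lines theorem along the affine segment from $\vct{e}_i$ to $x$, iterated over the vertices of the simplex, or directly by Phragmén--Lindelöf on tubes over convex sets. Evaluating at $x=(\alpha_1/\alpha,\dots,\alpha_k/\alpha)$, taking expectation over the randomness, and applying the generalized Hölder bound $\Expect\prod_i M_i^{x_i}\le\prod_i (\Expect M_i)^{x_i}$ (valid for nonnegative $M_i$ with exponents summing to one, via standard Hölder with conjugate exponents $p_i = 1/x_i$), followed by weighted AM--GM $\prod_i a_i^{x_i}\le\max_i a_i$, yields
$$
\bigl|\Expect F(\mtx{Y}_1^{\alpha_1},\dots,\mtx{Y}_k^{\alpha_k})\bigr| = |\Expect h(x)| \ \le\ \Expect|h(x)|\ \le\ \max_i\Expect M_i,
$$
which is the claim. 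The main obstacle is the clean formulation and application of multivariate Hadamard on a tube above the simplex rather than a strip; a secondary technical point is the definition of $|\mtx{Y}_i|^{\alpha z_i}$ when $\mtx{Y}_i$ is singular, handled by the $\varepsilon$-regularization above together with the integrability assumption $\Expect\|\mtx{Y}_i\|^\alpha<\infty$ to justify passing to the limit under expectation.
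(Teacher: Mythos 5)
Your argument is correct and arrives at the same geometric-mean estimate that the paper establishes in a refined form (Proposition~\ref{prop:interp-fancy}), but it interchanges the roles of interpolation and expectation. The paper puts the expectation \emph{inside} the analytic function: it defines $G(z_1,\dots,z_k) := \Expect F(\mtx{S}_1\mtx{P}_1^{z_1},\dots,\mtx{S}_k\mtx{P}_k^{z_k})$, verifies analyticity of $G$ via Morera and Fubini--Tonelli, applies the multivariate three-lines theorem (its Proposition~\ref{prop:multivar-interp}, proved by induction on a simplicial prism) to $G$ directly, and then needs only one application of Jensen's inequality. You instead fix the realization, interpolate the deterministic function $h$ pointwise, and push the expectation through the geometric mean at the end via generalized H{\"o}lder with exponents $1/x_i$; this is precisely the step the paper's ordering renders unnecessary. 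Both routes rest on the same two pillars --- a polar/sign factorization that turns the vertex-face boundary values into $F$ evaluated at unitaries commuting with the $\mtx{Y}_\ell$, and a multivariate Hadamard bound on a tube domain, which you sketch but the paper proves in full. Your ordering makes analyticity of the interpolated function trivial (no expectation to commute with complex differentiation), which is a genuine simplification; the paper's ordering avoids the H{\"o}lder step and thereby handles the probability with a single triangle inequality. One point worth making explicit in your writeup: $|\mtx{Y}_i|^{\iunit\alpha\tau}$ has no continuous extension through the singular set, so the $\eps$-regularization and the dominated-convergence passage under $\Expect$ (justified by $\Expect\norm{\mtx{Y}_i}^\alpha<\infty$) are structurally necessary, not cosmetic; also note that the naive pointwise inequality $\prod_i M_i^{x_i}\le\max_i M_i$ followed by expectation would only give $\Expect\max_i M_i$, which does \emph{not} bound $\max_i\Expect M_i$, so your H{\"o}lder step is doing real work.
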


\noindent
As with Fact~\ref{fact:pisier-xu}, the proof of Proposition~\ref{prop:interpolation}
depends on the Hadamard Three-Lines Theorem~\cite[Prop.~9.1.1]{Gar07:Inequalities-Journey}.
The argument is standard but somewhat involved, so we postpone the details to Appendix~\ref{sec:interpolation}.

\subsection{The Overture}

Let us commence with the proof of Theorem~\ref{thm:k2-body}.
The initial steps are similar with the argument that leads
to the matrix Khintchine inequality, Proposition~\ref{prop:khintchine-body}.
Introduce notation for the quantity of interest:
\begin{equation} \label{eqn:k2-trace-moment}
E^{2p} := \Expect \trace \mtx{X}^{2p}
	= \sum\nolimits_{q=0}^{2p-2} \sum\nolimits_{i=1}^n \Expect \trace \big[ \mtx{H}_i \mtx{X}^q \mtx{H}_i \mtx{X}^{2p-2-q} \big].
\end{equation}
The identity follows from the integration by parts result, Lemma~\ref{lem:trace-moments}.

This time, we make finer estimates for the summands in~\eqref{eqn:k2-trace-moment}.
Apply Proposition~\ref{prop:matrix-heinz} with $s = 0$ to the terms where $q \in \big\{0, 1, 2p - 3, 2p - 2 \big\}$.
For the remaining $2p - 5$ values of the exponent $q$, apply Proposition~\ref{prop:matrix-heinz} with $s = 1$.
We reach the bound
\begin{equation} \label{eqn:k2-less-young}
E^{2p} \leq 4 \sum\nolimits_{i=1}^n \Expect \trace \big[ \mtx{H}_i^2 \mtx{X}^{2p-2} \big]
	+ (2p-5) \sum\nolimits_{i=1}^n \Expect \trace \big[ \mtx{H}_i \mtx{X}^{2} \mtx{H}_i \mtx{X}^{2p-4} \big].
\end{equation}
We can take advantage of the fact that the $\mtx{H}_i$ are interleaved with the
powers $\mtx{X}^r$ of the random matrix in the second term.

\subsection{The First Term}

To treat the first term on the right-hand side of~\eqref{eqn:k2-less-young}, simply repeat the arguments
from Section~\ref{sec:pf-matrix-khintchine} to obtain a bound in terms of the quantity $E$.
We have
\begin{equation} \label{eqn:k2-first-term}
\sum\nolimits_{i=1}^n \Expect \trace \big[ \mtx{H}_i^2 \mtx{X}^{2p-2} \big]
	= \Expect \trace \big[ \mtx{V} \mtx{X}^{2p-2} \big]
	\leq \left( \trace \mtx{V}^p \right)^{1/p} \cdot \left( \Expect \trace \mtx{X}^{2p} \right)^{2(p-1)}
	= \sigma_{2p}^2 \cdot E^{2(p-1)}.
\end{equation}
The quantities $\mtx{V}$ and $\sigma_{2p}$ are defined in~\eqref{eqn:k2-body-variances}, and we have identified a copy of $E$.

\subsection{Integration by Parts, Again}

To continue, we want to break down the matrix $\mtx{X}^2$ that appears in the second term
on the right-hand side of~\eqref{eqn:k2-less-young}.  To do so, we perform
another Gaussian integration by parts.
Write $\mtx{X}^2 = \sum_{j=1}^n \gamma_j \mtx{H}_j \mtx{X}$,
and invoke Fact~\ref{fact:gauss-ibp} to obtain
\begin{multline} \label{eqn:k2-ibp2}
\sum\nolimits_{i=1}^n \Expect \trace \big[ \mtx{H}_i \mtx{X}^2 \mtx{H}_i \mtx{X}^{2p-4} \big]
	= \sum\nolimits_{i,j=1}^n \Expect \left[ \gamma_j \cdot \trace \big[ \mtx{H}_i \mtx{H}_j \mtx{X} \mtx{H}_i \mtx{X}^{2p-4} \big] \right] \\ 
	= \sum\nolimits_{i,j=1}^n \Expect \trace\big[ \mtx{H}_i \mtx{H}_j^2 \mtx{H}_i \mtx{X}^{2p-4} \big]
	+ \sum\nolimits_{i,j=1}^n \Expect \trace\left[ \mtx{H}_i \mtx{H}_j \mtx{X} \mtx{H}_i \left( \sum\nolimits_{r=0}^{2p-5} \mtx{X}^r \mtx{H}_j \mtx{X}^{2p-5-r} \right) \right].
\end{multline}
This result follows from the product rule and the formula~\eqref{eqn:d-power} for the derivative of a power.
We will bound the first term on the right-hand side of~\eqref{eqn:k2-ibp2} in terms of the
standard deviation parameter $\sigma_{2p}$, and the second term will lead to the matrix alignment parameter $w_{2p}$.

\subsection{Finding the Standard Deviation Parameter}

Let us address the first term on the right-hand side of~\eqref{eqn:k2-ibp2}.
First, draw the sum back into the trace and identify the matrix variance $\mtx{V}$,
defined in~\eqref{eqn:k2-body-variances}:
$$
\sum\nolimits_{i,j=1}^n \Expect \trace\big[ \mtx{H}_i \mtx{H}_j^2 \mtx{H}_i \mtx{X}^{2p-4} \big]
	= \Expect \trace \left[ \left( \sum\nolimits_{i=1}^n \mtx{H}_i \mtx{V} \mtx{H}_i \right) \mtx{X}^{2(p-2)} \right].
$$
To isolate the random matrix $\mtx{X}$,
apply H{\"o}lder's inequality~\eqref{eqn:holder} with exponents $\varrho = p/2$ and $\varrho' = p/(p-2)$,
and follow up with Lyapunov's inequality.  Thus,
$$
\Expect \trace \left[ \left( \sum\nolimits_{i=1}^n \mtx{H}_i \mtx{V} \mtx{H}_i \right) \mtx{X}^{2(p-2)} \right]
	\leq \left( \trace \left( \sum\nolimits_{i=1}^n \mtx{H}_i \mtx{V} \mtx{H}_i \right)^{p/2} \right)^{2/p}
	\cdot \left(\Expect \trace \mtx{X}^{2p} \right)^{(p-2)/p}.
$$
The Lust-Piquard inequality, Fact~\ref{fact:pisier-xu}, with $\varrho = p$ implies that
$$
\left( \trace \left(\sum\nolimits_{i=1}^n \mtx{H}_i \mtx{V} \mtx{H}_i \right)^{p/2} \right)^{2/p}
	\leq \left( \trace \left(\sum\nolimits_{i=1}^n \mtx{H}_i^2 \right)^{p} \right)^{1/p}
	\cdot \left( \trace \mtx{V}^p \right)^{1/p}
	= \left( \trace \mtx{V}^{p} \right)^{2/p}
	= \sigma_{2p}^4.
	$$
Once again, we identified $\mtx{V}$ and $\sigma_{2p}$ from~\eqref{eqn:k2-body-variances}.
Combine the last three displays to arrive at
\begin{equation} \label{eqn:k2-variance}
\sum\nolimits_{i,j=1}^n \Expect \trace\big[ \mtx{H}_i \mtx{H}_j^2 \mtx{H}_i \mtx{X}^{2p-4} \big]
	\leq \sigma_{2p}^4 \cdot \left(\Expect \trace \mtx{X}^{2p} \right)^{(p-2)/p}
	= \sigma_{2p}^4 \cdot E^{2(p-2)}.
\end{equation}
We have identified another copy of $E$.

\subsection{Finding the Matrix Alignment Parameter}
\label{sec:find-interleave}

It remains to study the second term on the right-hand side of~\eqref{eqn:k2-ibp2}.  Rearranging the sums,
we write this object as
$$
\sum\nolimits_{r=0}^{2p-5} \sum\nolimits_{i,j=1}^n \Expect \trace\left[ \mtx{H}_i \mtx{H}_j \mtx{X} \mtx{H}_i \mtx{X}^r \mtx{H}_j \mtx{X}^{2p-5-r} \right].
$$
We can apply the interpolation result, Proposition~\ref{prop:interpolation}, to consolidate the powers of
the random matrix $\mtx{X}$.  Consider the multilinear function
$$
F(\mtx{A}_1, \mtx{A}_2, \mtx{A}_3)
	:= \sum\nolimits_{i,j=1}^n \trace \big[ \mtx{H}_i \mtx{H}_j \mtx{A}_1 \mtx{H}_i \mtx{A}_2 \mtx{H}_j \mtx{A}_3 \big].
$$
Since $\mtx{X}$ is a matrix Gaussian series, it has moments of all orders.  Therefore, for each index $r$,
\begin{multline*}
\abs{ \Expect \trace\left[ \mtx{H}_i \mtx{H}_j \mtx{X} \mtx{H}_i \mtx{X}^r \mtx{H}_j \mtx{X}^{2p-5-r} \right] } \\
	\leq \max\left\{ \Expect \max_{\mtx{Q}_\ell}
	\abs{ F\big(\mtx{Q}_1 \mtx{X}^{2p-4}, \mtx{Q}_2, \mtx{Q}_3 \big)  }, \
	\Expect \max_{\mtx{Q}_\ell} \abs{ F\big(\mtx{Q}_1, \mtx{Q}_2  \mtx{X}^{2p-4}, \mtx{Q}_3 \big)  }, \
	\Expect \max_{\mtx{Q}_\ell} \abs{ F\big(\mtx{Q}_1, \mtx{Q}_2, \mtx{Q}_3 \mtx{X}^{2p-4} \big)  } \right\}.
\end{multline*}
All three terms in the maximum admit the same bound, so we may as well consider the third one:
$$
\begin{aligned}
\Expect \max_{\mtx{Q}_\ell} \abs{  F(\mtx{Q}_1, \mtx{Q}_2, \mtx{Q}_3 \mtx{X}^{2p-4})  }
	&= \Expect \max_{\mtx{Q}_\ell}  \abs{ \sum\nolimits_{i,j=1}^n \trace \big[ \mtx{H}_i \mtx{H}_j \mtx{Q}_1 \mtx{H}_i
	\mtx{Q}_2 \mtx{H}_j \mtx{Q}_3 \mtx{X}^{2(p-2)}  \big] } \\
	&= \Expect \max_{\mtx{Q}_\ell}  \abs{ \trace \left[ \left( \sum\nolimits_{i,j=1}^n \mtx{Q}_3
	\mtx{H}_i \mtx{H}_j \mtx{Q}_1 \mtx{H}_i \mtx{Q}_2 \mtx{H}_j \right) \mtx{X}^{2(p-2)} \right] } \\
	&\leq \Expect \max_{\mtx{Q}_\ell} \left[ \left( \trace \abs{ \sum\nolimits_{i,j=1}^n
	\mtx{H}_i \mtx{H}_j \mtx{Q}_1 \mtx{H}_i \mtx{Q}_2 \mtx{H}_j }^{p/2} \right)^{2/p} \cdot
	\left( \trace \mtx{X}^{2p} \right)^{(p-2)/p} \right] \\
	&\leq \max_{\mtx{Q}_\ell} \left( \trace \abs{ \sum\nolimits_{i,j=1}^n
	\mtx{H}_i \mtx{H}_j \mtx{Q}_1 \mtx{H}_i \mtx{Q}_2 \mtx{H}_j }^{p/2} \right)^{2/p} \cdot
	\left( \Expect \trace \mtx{X}^{2p} \right)^{(p-2)/p} \\
	&\leq w_{2p}^{4} \cdot E^{2(p-2)}.
\end{aligned}
$$
The first step is the definition of $F$.  To reach the second line, we use the fact that
$\mtx{Q}_3$ commutes with $\mtx{X}$, then we cycle the trace.  The third line is
H{\"o}lder's inequality~\eqref{eqn:holder} with $\varrho = p/2$ and $\varrho' = p/(p-2)$,
and we have used the left unitary invariance of the matrix absolute value to delete $\mtx{Q}_3$.
Next, take the maximum over all unitary matrices, and apply Lyapunov's inequality to
draw the expectation into the term involving $\mtx{X}$.  Finally, identify the quantity $E$
and note that the maximum is bounded by the alignment parameter $w_{2p}^4$, defined in~\eqref{eqn:k2-body-alignment}.
Similar calculations are valid for the other two terms, whence
$$
\abs{ \Expect \trace\left[ \mtx{H}_i \mtx{H}_j \mtx{X} \mtx{H}_i \mtx{X}^r \mtx{H}_j \mtx{X}^{2p-5-r} \right] } \\
	\leq w_{2p}^{4} \cdot E^{2(p-2)}.
$$
Since there are $2p - 4$ possible choices of $r$, we determine that
\begin{equation} \label{eqn:k2-interleave}
\sum\nolimits_{r=0}^{2p-5} \sum\nolimits_{i,j=1}^n \Expect \trace\left[ \mtx{H}_i \mtx{H}_j \mtx{X} \mtx{H}_i \mtx{X}^r \mtx{H}_j \mtx{X}^{2p-5-r} \right]
	\leq (2p-4) \cdot w_{2p}^4 \cdot E^{2(p-2)}.
\end{equation}
The main part of the argument is finished.

\subsection{Putting the Pieces Together}

To conclude, we merge the bounds we have obtained and solve the resulting inequality
for the quantity $E$.
Combine~\eqref{eqn:k2-less-young},~\eqref{eqn:k2-first-term},~\eqref{eqn:k2-ibp2},~\eqref{eqn:k2-variance}, and~\eqref{eqn:k2-interleave} to reach
$$
E^{2p} \leq 4 \sigma_{2p}^2 \cdot E^{2 (p-1)}
	+ (2p-5) \cdot \left[ \sigma_{2p}^4 + (2p-4) \cdot w_{2p}^4 \right]
	\cdot E^{2 (p-2)}.
$$
Clearing factors of $E$, we reach the inequality
$$
E^4 \leq 4 \sigma_{2p}^2 \cdot E^2 + (2p-5) \cdot \left[ \sigma_{2p}^4 + (2p-4) \cdot w_{2p}^4 \right].
$$
If $\alpha$ and $\beta$ are nonnegative numbers,
each nonnegative solution to the quadratic inequality
$t^2 \leq \alpha t + \beta$ must satisfy $t \leq \alpha + \sqrt{\beta}$.
It follows that
$$
E^2 \leq 4 \sigma_{2p}^2 + \sqrt{2p-5} \cdot \left[ \sigma_{2p}^4 + (2p-4) \cdot w_{2p}^4 \right]^{1/2}.
$$
Take the square root, and invoke subadditivity of the square root (twice) to reach
$$
E \leq \big(2 + \sqrt[4]{2p-5} \big) \cdot \sigma_{2p} + \sqrt[4]{(2p-5)(2p-4)} \cdot w_{2p}.
$$
Finally, we simplify the numerical constants to arrive at~\eqref{eqn:k2-body-ineq}.

\subsection{Comparison of Standard Deviation and Alignment Parameters}
\label{sec:interleave-stdev}

Our last task in this section is to establish Proposition~\ref{prop:sigma-versus-w},
which states that the alignment parameter $w_{2p}$ never exceeds the standard deviation $\sigma_{2p}$.
The easiest way to obtain this result is to use block matrices and inequalities for the Schatten norm.

Fix an integer $p \geq 2$, and fix a triple $(\mtx{Q}_1, \mtx{Q}_2, \mtx{Q}_3)$ of unitary matrices.
Consider the quantity
$$
S := \left( \trace \abs{ \sum\nolimits_{i,j=1}^n \mtx{H}_i \mtx{Q}_1 \mtx{H}_j \mtx{Q}_2 \mtx{H}_i \mtx{Q}_3 \mtx{H}_j }^{p/2} \right)^{2/p}.
$$
To establish Proposition~\ref{prop:sigma-versus-w}, it suffices to show that $S \leq \sigma_{2p}^4$.
Using block matrices and converting the trace into a Schatten norm, we can write
$$
S = \pnorm{p/2}{ \begin{bmatrix} \vdots
	\\  \mtx{Q}_2^\adj \mtx{H}_j \mtx{Q}_1^\adj \mtx{H}_i \\ \vdots \end{bmatrix}^\adj
	\begin{bmatrix} \vdots
	\\ \mtx{H}_i \mtx{Q}_3 \mtx{H}_j \\ \vdots \end{bmatrix} }.
$$
The entries of the block column matrices are indexed by pairs $(i, j)$, arranged in lexicographic order. 
Invoke the Cauchy--Schwarz inequality~\eqref{eqn:holder-schatten} for Schatten norms with $\varrho = p/2$:
$$
S^2 \leq \pnorm{p/2}{ \begin{bmatrix} \vdots
	\\ \mtx{Q}_2^\adj \mtx{H}_j \mtx{Q}_1^\adj \mtx{H}_i \\ \vdots \end{bmatrix}^\adj
	\begin{bmatrix} \vdots
	\\ \mtx{Q}_2^\adj \mtx{H}_j \mtx{Q}_1^\adj \mtx{H}_i \\ \vdots \end{bmatrix} } \times
	\pnorm{p/2}{ \begin{bmatrix} \vdots
	\\ \mtx{H}_i \mtx{Q}_3 \mtx{H}_j \\ \vdots \\ \end{bmatrix}^\adj \begin{bmatrix} \vdots
	\\ \mtx{H}_i \mtx{Q}_3 \mtx{H}_j \\ \vdots \\ \end{bmatrix} }.
$$
Write each product of two block matrices as a sum:
$$
S^2 \leq \pnorm{p/2}{ \sum\nolimits_{i,j=1}^n \mtx{H}_i \mtx{Q}_1^\adj \mtx{H}_j^2 \mtx{Q}_1 \mtx{H}_i } \times
	\pnorm{p/2}{ \sum\nolimits_{i,j=1}^n \mtx{H}_j \mtx{Q}_3^\adj \mtx{H}_i^2 \mtx{Q}_3 \mtx{H}_j }.
$$
The two factors have the same form, so it suffices to bound the first one.  Indeed,
$$
\pnorm{p/2}{ \sum\nolimits_{i,j=1}^n \mtx{H}_i \mtx{Q}_1^\adj \mtx{H}_j^2 \mtx{Q}_1 \mtx{H}_i }
	= \pnorm{p/2}{ \sum\nolimits_{i=1}^n \mtx{H}_i \mtx{Q}_1^\adj \mtx{V} \mtx{Q}_1 \mtx{H}_i }
	\leq \pnorm{p}{ \sum\nolimits_{i=1}^n \mtx{H}_i^2 }
	\cdot \pnorm{p}{ \mtx{Q}_1^\adj \mtx{V} \mtx{Q}_1 }
	= \pnorm{p}{ \mtx{V} }^2
	= \sigma_{2p}^{4}.
$$
We have identified the matrix variance $\mtx{V}$, defined in~\eqref{eqn:k2-body-variances},
and then we applied the Lust-Piquard inequality, Fact~\ref{fact:pisier-xu}, with $\varrho = p$.
We identified $\mtx{V}$ again, invoked unitary invariance of the Schatten norm,
and then we recognized the quantity $\sigma_{2p}$ from~\eqref{eqn:k2-body-variances}.
In summary, we have established that
$
S^2 \leq \sigma_{2p}^{8}.
$
This is what we needed to show.

\section{Second-Order Matrix Khintchine under Strong Isotropy}
\label{sec:k2-strong}

In this section, we prove an extension of Theorem~\ref{thm:k2-strong-intro}
that gives both lower and upper bounds for the trace moments of a strongly
isotropic matrix Gaussian series.

\begin{theorem}[Second-Order Matrix Khintchine under Strong Isotropy] \label{thm:sharp-moment}
Let $\mtx{X} := \sum\nolimits_{i=1}^n \gamma_i \mtx{H}_i$ be an Hermitian matrix Gaussian series,
as in~\eqref{eqn:matrix-gauss-series}.  Assume that $\mtx{X}$ has the strong isotropy property
\begin{equation} \label{eqn:sharp-body-isotropy}
\Expect \mtx{X}^p = (\Expect \ntr \mtx{X}^p) \cdot \Id
\quad\text{for each $p = 0, 1, 2, \dots$.}
\end{equation}
Define the matrix standard deviation parameter and matrix alignment parameter
\begin{equation} \label{eqn:sharp-body-stats}
\sigma := \norm{ \sum\nolimits_{i=1}^n \mtx{H}_i^2 }^{1/2}
\quad\text{and}\quad
w := \max_{\mtx{Q}_{\ell}}\ \norm{ \sum\nolimits_{i,j=1}^n \mtx{H}_i \mtx{Q}_1 \mtx{H}_j \mtx{Q}_2 \mtx{H}_i \mtx{Q}_3 
\mtx{H}_j }^{1/4}.
\end{equation}
The maximum ranges over a triple $(\mtx{Q}_1, \mtx{Q}_2, \mtx{Q}_3)$ of unitary matrices.  Then,
for each integer $p \geq 1$,
$$
\Cat_p^{1/(2p)} \cdot \sigma \cdot \big[ 1 \ -\ (pw/\sigma)^4 \big]_+^{1/(2p)}
\quad\leq\quad \left( \Expect \ntr \mtx{X}^{2p} \right)^{1/(2p)}
\quad\leq\quad \Cat_p^{1/(2p)} \cdot \sigma\ +\ 2^{1/4} p^{5/4} \cdot w.
$$
The lower bound also requires that $p^{7/4} w \leq 0.7 \sigma$.
We have written $\Cat_p$ for the $p$th Catalan number,
 the function $[a]_+ := \max\{a,0\}$, and $\ntr$ is the normalized trace.
\end{theorem}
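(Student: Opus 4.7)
The plan is to iterate the trace moment identity of Lemma~\ref{lem:trace-moments} $p$ times, which amounts to a matrix Wick formula: $\Expect \trace \mtx{X}^{2p}$ expands as a sum over pair partitions $\pi$ of $\{1,\dots,2p\}$, where each pair represents a Gaussian contraction between two occurrences of $\mtx{X}$, and each pair partition contributes a trace of a product of the coefficient matrices $\mtx{H}_i$ contracted according to $\pi$. I would organize this expansion by the combinatorial type of $\pi$: the $\Cat_p$ non-crossing pairings will produce the main term of order $d \sigma^{2p} \Cat_p$, while the crossing pairings will each expose a block controlled by the matrix alignment parameter $w$ from~\eqref{eqn:sharp-body-stats}.

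For a non-crossing pairing, one can repeatedly collapse an innermost contracted pair $\sum_{i=1}^n \mtx{H}_i \, (\,\cdot\,) \, \mtx{H}_i$. In the absence of interleaved random factors this would simply replace the pair by $\mtx{V} := \sum_i \mtx{H}_i^2$; the strong isotropy hypothesis~\eqref{eqn:sharp-body-isotropy} is what lets this intuition work rigorously, since every intermediate expectation $\Expect \mtx{X}^{2k}$ that appears equals $m_{2k} \Id$ (with $m_{2k} := \Expect \ntr \mtx{X}^{2k}$), which commutes with everything around it. Bounding each collapsed $\mtx{V}$ by $\sigma^2 \Id$ spectrally and tracking the resulting Catalan recursion $m^{\mathrm{main}}_{2p} \le \sigma^2 \sum_{q=0}^{p-1} m^{\mathrm{main}}_{2q} m^{\mathrm{main}}_{2(p-1-q)}$ delivers the upper bound $d \sigma^{2p} \Cat_p$ on the non-crossing contribution, matching the leading $\Cat_p^{1/(2p)} \sigma$ term after normalizing by $d$ and taking a $(2p)$-th root.

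For a crossing pairing, there exist indices $a<c<b<d$ with $\{a,b\}, \{c,d\} \in \pi$, and the corresponding portion of the trace takes the form $\mtx{H}_i \mtx{Q}_1 \mtx{H}_j \mtx{Q}_2 \mtx{H}_i \mtx{Q}_3 \mtx{H}_j$ after absorbing intervening matrix powers (and scalar-identity expectations from strong isotropy) into unitaries $\mtx{Q}_\ell$. Estimating these blocks via Proposition~\ref{prop:matrix-heinz}, H\"older~\eqref{eqn:holder}, and Lust--Piquard (Fact~\ref{fact:pisier-xu}), exactly as in the alignment-parameter argument of Section~\ref{sec:find-interleave}, produces a factor of $w^4$ per crossing in place of $\sigma^4$. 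Combining with Lyapunov interpolation to reduce lower moments back into $m_{2p}$ at each level of the recursion yields the $2^{1/4} p^{5/4} w$ correction term in the upper bound. For the lower bound, I would run the accounting in the opposite direction: the non-crossing piece is a genuine lower bound up to subtracting the crossing error, which contributes a $-(pw/\sigma)^4$ inside the positive part, and the hypothesis $p^{7/4} w \le 0.7 \sigma$ ensures the subtraction does not render the bound vacuous.

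The main obstacle I expect is extracting the precise polynomial factor $p^{5/4}$ (rather than something super-polynomial) in the upper bound. A crude count of crossing pair partitions of $\{1,\dots,2p\}$ gives $(2p-1)!! - \Cat_p$, which grows far too fast, so the argument cannot simply sum absolute values of errors. Instead, one must reorganize the recursion so that a single alignment factor $w^4$ is amortized against four powers of $\mtx{X}$ (accounting for the exponent $5/4 = 1 + 1/4$), which requires careful placement of Lyapunov's inequality at each level and matches the structure already visible in the proof of Theorem~\ref{thm:k2-body}, now iterated through all $p$ levels of the Wick expansion. Translating the free-probability heuristic ``only non-crossings survive'' into the stated explicit finite-$p$ constants is the technically delicate step.
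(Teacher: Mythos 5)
Your free-probability heuristic is the right one, and you correctly identify that non-crossing pairings should produce the Catalan numbers while crossing pairings should expose the alignment parameter. But the proposal, as stated, has a gap that you yourself notice and do not close, and the route you describe is not the one that actually works.

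The core problem is that a full Wick expansion does not survive the accounting. Each crossing pairing of $\{1,\dots,2p\}$ gives a deterministic trace of $\mtx{H}$'s, not an expression containing intermediate factors $\Expect \mtx{X}^{2k}$; there is nothing for strong isotropy to act on inside an individual Wick term. And even if each crossing term is bounded by $w^4\sigma^{2p-4}$ (say), the number of crossing pairings is $(2p-1)!!-\Cat_p$, so summing absolute values gives a super-exponential, not polynomial, error, which you acknowledge but leave unresolved. The ``amortize $w^4$ against four powers of $\mtx{X}$'' idea is essentially the right intuition for why a recursion should work, but it needs a concrete mechanism. The paper supplies one: rather than expanding, it performs a \emph{single} integration by parts to get $\mu_{2(p+1)}=\sum_q\sum_i\Expect\ntr[\mtx{H}_i\mtx{X}^q\mtx{H}_i\mtx{X}^{2p-q}]$, then \emph{centers} each factor, writing $\mtx{X}^q=\Expect\mtx{X}^q+(\mtx{X}^q-\Expect\mtx{X}^q)$. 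Strong isotropy applies to the centered decomposition (not to individual Wick terms) and turns the main term into $\sigma^2\mu_q\mu_{2p-q}$, producing the Catalan recursion~\eqref{eqn:cat-recurse}. The remaining covariance is then handled with the Gaussian covariance identity (Fact~\ref{fact:ou}), which introduces the second copy $\mtx{X}_t$ via the Ornstein--Uhlenbeck semigroup; only at that point does the multilinear interpolation (Proposition~\ref{prop:interpolation}) extract a single $w^4$ factor, with the residual moment $\mu_{2(p-1)}$ kept intact. Solving the resulting two-sided recursion~\eqref{eqn:my-recurse-ul} by \emph{induction on $p$}, rather than unrolling it, is precisely what prevents the combinatorial blowup and delivers the polynomial factor $p^{5/4}$ and the explicit $\Cat_p$ coefficient.

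So the missing ingredients are: (i) the centering decomposition, which is what makes strong isotropy usable; (ii) the Gaussian covariance identity / OU semigroup, which represents the perturbation in a form amenable to the interpolation lemma; and (iii) solving a recursion in $\mu_{2p}$ by induction instead of a full combinatorial expansion. Without these, the approach as written would not produce the stated bound. The lower-bound mechanism you sketch (non-crossing piece minus a crossing correction) is again the right heuristic, but it too is realized in the paper by running the same recursion downward, using the already-established upper bound on $\mu_{2(p-1)}$ under the hypothesis $p^{7/4}w\leq 0.7\sigma$.
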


\noindent
The proof of this result appears below, starting in Section~\ref{sec:trace-moments}.
To reach the statement of Theorem~\ref{thm:k2-strong-intro} in the introduction,
we rewrite normalized traces in terms of Schatten norms.  Fact~\ref{fact:catalan} (below)
states that the Catalan numbers satisfy the bound $\Cat_p \leq 4^p$ for each
$p = 1, 2, 3, \dots$, which gives an explicit numerical form for the upper bound.

\subsection{Discussion}
\label{sec:k2-strong-discussion}

Before we establish Theorem~\ref{thm:sharp-moment}, let us comment on the
proof and the meaning of this result.
The most important observation is that the estimate is extremely accurate, at least for some examples.
In particular, for the GOE matrix $\mtx{X}_{\rm goe}$ defined in~\eqref{eqn:intro-goe},
we showed in Section~\ref{sec:gauss-wigner} that the standard deviation parameter
$\sigma \approx 1$ while the alignment parameter
$w \approx d^{-1/4}$.  Therefore, Theorem~\ref{thm:sharp-moment} implies that
$$
\Expect \ntr \mtx{X}_{\rm goe}^{2p} \approx \Cat_{p}
\quad\text{when $p \ll d^{1/7}$.}
$$
This estimate is sufficient to prove that the limiting spectral distribution of
the GOE is the semicircle law.  See~\cite[Sec.~2.3]{Tao12:Topics-Random} for details
about how to derive the law from the trace moments.  Furthermore, Markov's inequality
implies that the norm $\norm{\smash{\mtx{X}_{\rm goe}}} \approx 2$ with high probability.

The proof of Theorem~\ref{thm:sharp-moment} has a lot in common with the arguments
leading up to Proposition~\ref{prop:khintchine-body} and Theorem~\ref{thm:k2-body}.
The main innovation is that we can use the strong isotropy
to imitate a moment identity that would hold in free probability.  This idea allows us to
remove the dependence on $p$ from the standard deviation term.

Although it may seem that the proof requires the matrix $\mtx{X}$ to be a Gaussian
series, there are analogous techniques, based on the theory of exchangeable
pairs~\cite{MJCFT14:Matrix-Concentration}, that allow us to deal with other
types of random matrix series.  This observation has the potential to lead to
universality laws.
It is also clear from the argument that we could prove related results with an approximate
form of strong isotropy.

In addition, it is possible to extend these ideas to a rectangular matrix
Gaussian series $\mtx{Z} := \sum\nolimits_{i=1}^n \gamma_i \mtx{S}_i \in \mathbb{C}^{d_1 \times d_2}$.
In this case, we consider the Hermitian dilation:
$$
\coll{H}(\mtx{Z}) = \begin{bmatrix} \mtx{0} & \mtx{Z} \\ \mtx{Z}^\adj & \mtx{0} \end{bmatrix}.
$$
The correct analog of strong isotropy is that
$$
\Expect \coll{H}(\mtx{Z})^{2p}
	= \begin{bmatrix} (\Expect \ntr (\mtx{ZZ}^\adj)^p ) \cdot \Id & \mtx{0} \\
	\mtx{0} & (\Expect \ntr (\mtx{Z}^\adj \mtx{Z})^p ) \cdot \Id \end{bmatrix}
	\quad\text{for $p = 0, 1, 2, \dots$.}
$$
This observation allows us to obtain sharp bounds for the trace moments of 
rectangular Gaussian matrices.  In this fashion, we can even show that the
limiting spectral density of a sequence of rectangular Gaussian matrices is the Mar{c}enko--Pastur
distribution, provided that the aspect ratio of the sequence is held constant.

Finally, we remark that similar arguments can be applied to obtain algebraic relations
for the Stieltjes transform of the matrix $\mtx{X}$.  This approach may lead more directly
to limit laws for sequences of random matrices with increasing dimension.
See~\cite[Sec.~2.4.1]{AGZ10:Introduction-Random} or~\cite[Sec.~9]{Kem13:Introduction-Random}
for an argument of this species.

\subsection{Preliminaries}

Aside from the results we have collected so far, the proof of Theorem~\ref{thm:sharp-moment} requires
a few additional ingredients.  First, we state some of the basic and well-known properties of the Catalan
numbers.  
\begin{fact}[Catalan Numbers] \label{fact:catalan}
The $p$th Catalan number is defined by the formula
\begin{equation} \label{eqn:cat-defn}
\Cat_p := \frac{1}{p+1} {2p \choose p}
\quad\text{for $p= 0,1,2,\dots$.}
\end{equation}
In particular, $p \mapsto \Cat_p$ is nondecreasing, and $\Cat_p \leq 4^p$ for each $p$.
The Catalan numbers satisfy the recursion
\begin{equation} \label{eqn:cat-recurse}
\Cat_0 = 1
\quad\text{and}\quad
\Cat_{p+1} = \sum\nolimits_{q=0}^p \Cat_q \Cat_{p-q}.
\end{equation}
\end{fact}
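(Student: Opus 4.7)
The plan is to verify each item directly from the defining formula $\Cat_p = \binom{2p}{p}/(p+1)$, since all four claims are elementary consequences of that expression. I would handle monotonicity first by examining consecutive ratios, then the exponential bound by a one-line estimate on the central binomial coefficient, and finally the recursion via a generating-function argument.

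For monotonicity, I would compute
$$
\frac{\Cat_{p+1}}{\Cat_p} = \frac{(2p+2)(2p+1)}{(p+2)(p+1)} = \frac{2(2p+1)}{p+2}
$$
and observe that $2(2p+1) \geq p+2$ holds whenever $p \geq 0$ (equivalently, $3p \geq 0$), so this ratio is always at least $1$. For the bound $\Cat_p \leq 4^p$, I would note that the central binomial coefficient $\binom{2p}{p}$ is a single summand in the expansion $(1+1)^{2p} = \sum_{k=0}^{2p}\binom{2p}{k} = 4^p$, hence $\binom{2p}{p} \leq 4^p$; dividing by $p+1 \geq 1$ preserves the inequality.

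The recursion~\eqref{eqn:cat-recurse} is the only nontrivial item. The cleanest route is generating functions: introduce $C(x) := \sum_{p \geq 0} \Cat_p x^p$, and appeal to the classical first-return decomposition of a Dyck path of length $2(p+1)$ (split at its first return to the axis to obtain an ordered pair of Dyck paths whose lengths sum to $2p$) to derive the functional equation $C(x) = 1 + x\,C(x)^2$. Expanding the square and matching coefficients of $x^{p+1}$ on both sides gives exactly~\eqref{eqn:cat-recurse}. An equivalent proof decomposes a full binary tree with $p+1$ internal nodes at its root into a left and right subtree. The initial condition $\Cat_0 = 1$ is immediate from the formula.

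The main obstacle is really only the recursion, and even there the argument is standard. Since these are textbook staples, in practice I would cite a reference such as Stanley's \titl{Enumerative Combinatorics} rather than reproduce the full derivation; all the remaining assertions collapse to a single factorial manipulation each.
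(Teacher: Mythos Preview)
Your proposal is correct. The paper itself does not prove Fact~\ref{fact:catalan}; it is stated without argument as one of the ``basic and well-known properties of the Catalan numbers,'' so there is no paper proof to compare against, and your suggestion to cite a standard reference such as Stanley is exactly in the spirit of how the paper handles it.

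One small remark on your recursion argument: as written, the Dyck-path (or binary-tree) decomposition establishes the convolution identity for the \emph{combinatorial} count of Dyck paths, not directly for the closed form $\frac{1}{p+1}\binom{2p}{p}$. To close the loop from the formula~\eqref{eqn:cat-defn} you either need the separate (equally standard) fact that Dyck paths of semilength $p$ are counted by $\frac{1}{p+1}\binom{2p}{p}$, or else a purely algebraic route---for instance, show via the generalized binomial theorem that $\sum_{p\ge 0}\frac{1}{p+1}\binom{2p}{p}x^p = \frac{1-\sqrt{1-4x}}{2x}$ and then verify that this function satisfies $C(x)=1+xC(x)^2$. Either way the gap is routine and well documented in any reference you would cite.
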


The next result is a covariance identity for a product of centered functions of a
Gaussian vector~\cite[Thm.~2.9.1]{NP12:Normal-Approximations}.
It can be regarded as a refinement of the Poincar{\'e} inequality,
which provides a bound for the variance of a centered function of a Gaussian vector.

\begin{fact}[Gaussian Covariance Identity] \label{fact:ou}
Let $\vct{\gamma}, \vct{\gamma}' \in \R^n$ be independent standard normal vectors.
Let $f, g : \R^n \to \C$ be functions whose derivatives are square integrable with respect
to the standard normal measure, and assume that $\Expect f(\vct{\gamma}) = \Expect g( \vct{\gamma} ) = 0$.
Then
$$
\Expect\big[ f(\vct{\gamma}) \cdot g(\vct{\gamma}) \big]
	= \int_0^\infty  \frac{\diff{t}}{\econst^t} \sum\nolimits_{j=1}^n
	\Expect\big[ (\partial_j f)(\vct{\gamma}) \cdot (\partial_j g)(\vct{\gamma}_t) \big]
\quad\text{where}\quad
\vct{\gamma}_t := \econst^{-t} \vct{\gamma} + \sqrt{1 - \econst^{-2t}} \vct{\gamma}'.
$$
The symbol $\partial_j$ refers to differentiation with respect to the $j$th coordinate.
\end{fact}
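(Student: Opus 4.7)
My plan is to derive this identity via the Ornstein--Uhlenbeck semigroup, since $\vct{\gamma}_t = \econst^{-t} \vct{\gamma} + \sqrt{1 - \econst^{-2t}} \vct{\gamma}'$ is precisely the Mehler representation of the OU evolution started from $\vct{\gamma}$. Introduce the operator $(P_t h)(\vct{x}) := \Expect[h(\econst^{-t} \vct{x} + \sqrt{1 - \econst^{-2t}} \vct{\gamma}')]$, which by the tower property satisfies $\Expect[f(\vct{\gamma}) \cdot h(\vct{\gamma}_t)] = \Expect[f(\vct{\gamma}) \cdot (P_t h)(\vct{\gamma})]$. The ingredients I will use are the standard properties of $(P_t)$: the initial condition $P_0 = \mathrm{I}$; the ergodic limit $P_t h \to \Expect h(\vct{\gamma})$ as $t \to \infty$; the generator formula $\tfrac{\diff{}}{\diff{t}} P_t h = L P_t h$ with $L h := \Delta h - \vct{x} \cdot \nabla h$; and the commutation relation $\partial_j P_t h = \econst^{-t} P_t (\partial_j h)$, obtained by differentiating the Mehler integrand in $\vct{x}$.

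Because $\Expect g(\vct{\gamma}) = 0$, the semigroup dissipates to zero, so I express the covariance as a telescoping integral in time,
$$
\Expect[f(\vct{\gamma}) g(\vct{\gamma})] = -\int_0^\infty \frac{\diff{}}{\diff{t}} \Expect\bigl[ f(\vct{\gamma}) \cdot (P_t g)(\vct{\gamma}) \bigr] \idiff{t} = -\int_0^\infty \Expect\bigl[ f(\vct{\gamma}) \cdot (L P_t g)(\vct{\gamma}) \bigr] \idiff{t}.
$$
The next step is to turn $\Expect[f \cdot L h]$ into a Dirichlet form. Applying the Gaussian integration-by-parts formula of Fact~\ref{fact:gauss-ibp} to the identity $\Expect[\gamma_j \cdot f \partial_j h] = \Expect[(\partial_j f)(\partial_j h)] + \Expect[f \cdot \partial_j^2 h]$ and summing on $j$ yields $\Expect[f \cdot L h] = -\sum_{j=1}^n \Expect[(\partial_j f)(\partial_j h)]$. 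Setting $h = P_t g$ and invoking the commutation relation produces
$$
\Expect[f(\vct{\gamma}) g(\vct{\gamma})] = \int_0^\infty \econst^{-t} \sum_{j=1}^n \Expect\bigl[ (\partial_j f)(\vct{\gamma}) \cdot P_t(\partial_j g)(\vct{\gamma}) \bigr] \idiff{t}.
$$
One final use of the tower property rewrites $\Expect[(\partial_j f)(\vct{\gamma}) \cdot P_t(\partial_j g)(\vct{\gamma})]$ as $\Expect[(\partial_j f)(\vct{\gamma}) \cdot (\partial_j g)(\vct{\gamma}_t)]$, which delivers the claimed formula.

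The main obstacle is regularity. The $L^2$ hypothesis on the derivatives is used in three places: to differentiate under the expectation in $t$, to justify the Gaussian integration-by-parts step for the pair $(f, P_t g)$, and to control $P_t g \to 0$ at a rate governed by the spectral gap (Poincar\'e inequality) of the OU semigroup. The cleanest way to handle these technicalities is to first establish the identity for smooth, compactly supported $f$ and $g$, where every manipulation is transparent, and then extend by dominated convergence and a standard density argument, using the hypothesis that $\nabla f$ and $\nabla g$ lie in $L^2$ of the Gaussian measure.
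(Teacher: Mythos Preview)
Your argument is correct and follows the standard Ornstein--Uhlenbeck semigroup route. Note, however, that the paper does not give its own proof of this statement: it is recorded as a \emph{Fact} and attributed to~\cite[Thm.~2.9.1]{NP12:Normal-Approximations}, with the remark that ``the usual statement of this result involves the Ornstein--Uhlenbeck semigroup, but we have given a more elementary formulation.'' Your derivation is exactly that usual OU-semigroup proof, so there is nothing to compare against beyond the citation.
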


\noindent
The usual statement of this result involves the Ornstein--Uhlenbeck semigroup,
but we have given a more elementary formulation.

Finally, we need a bound for the solution to a certain type of
polynomial inequality.  This estimate is related to Fujiwara's
inequality~\cite[Sec.~27]{Mar66:Geometry-Polynomials}.  We include
a proof sketch since we could not locate the precise statement in
the literature.

\begin{proposition}[Polynomial Inequalities] \label{prop:poly-ineq}
Consider an integer $k \geq 3$, and fix positive numbers $\alpha$ and $\beta$.
Then
$$
u^k \leq \alpha + \beta u^{k-2} \quad\text{implies}\quad
u \leq \alpha^{1/k} + \beta^{1/2} 
$$
\end{proposition}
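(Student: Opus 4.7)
The plan is to bound $u$ by comparing it to the candidate $v := \alpha^{1/k} + \beta^{1/2}$ and using monotonicity. First I would set $a := \alpha^{1/k}$ and $b := \beta^{1/2}$, so that $v = a+b$, $\alpha = a^k$, and $\beta = b^2$, and reformulate the desired implication as: the polynomial $g(u) := u^k - \beta u^{k-2}$ satisfies $g(u) \leq \alpha$ only for $u \in [0, v]$.

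The key step is to verify that $g(v) \geq \alpha$. Substituting, this amounts to showing
\begin{equation}
(a+b)^k \ \geq\ a^k \ +\ b^2 (a+b)^{k-2}.
\end{equation}
Factoring $(a+b)^{k-2}$ out of the difference $(a+b)^k - b^2(a+b)^{k-2}$ gives $(a+b)^{k-2}\cdot a(a+2b)$, so the inequality reduces to
\begin{equation}
(a+b)^{k-2}\,(a+2b)\ \geq\ a^{k-1}.
\end{equation}
Since $a,b \geq 0$ and $k \geq 3$, each factor on the left dominates the corresponding factor on the right: $(a+b)^{k-2} \geq a^{k-2}$ and $a + 2b \geq a$. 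Multiplying these yields the claim, so $g(v) \geq \alpha$ as desired.

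Next I would show that $g$ is strictly increasing on the interval $[v, \infty)$. A direct differentiation gives $g'(u) = u^{k-3}\bigl(k u^2 - (k-2)\beta\bigr)$, which is positive whenever $u^2 > (k-2)\beta/k$. Since $v = a+b \geq b = \sqrt{\beta} \geq \sqrt{(k-2)\beta/k}$, the function $g$ is strictly increasing on $[v,\infty)$.

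Combining the two steps finishes the proof: if $u \geq 0$ satisfies $u^k \leq \alpha + \beta u^{k-2}$, i.e.\ $g(u) \leq \alpha$, then $u > v$ would force $g(u) > g(v) \geq \alpha$, a contradiction. Hence $u \leq v = \alpha^{1/k} + \beta^{1/2}$. There is no real obstacle here; the only conceptual content is the factorization in the middle display, and once that is in hand the monotonicity argument is routine.
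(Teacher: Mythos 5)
Your proof is correct. It rests on the same pivotal computation as the paper's sketch, namely verifying that the polynomial $\phi(u)=u^k-\beta u^{k-2}-\alpha$ is nonnegative at the candidate $v=\alpha^{1/k}+\beta^{1/2}$, but you close the argument differently. The paper invokes Descartes' Rule of Signs to get a unique positive root $u_+$ and then deduces $u_+<v$ from $\phi(v)>0$; you instead differentiate to show $g(u)=u^k-\beta u^{k-2}$ is strictly increasing for $u\geq v$ (using $v\geq\sqrt{\beta}\geq\sqrt{(k-2)\beta/k}$), which makes the contradiction immediate. Both routes are sound; yours is a touch more elementary since it avoids the root-counting lemma, and it has the merit of carrying out the "direct calculation" that the paper leaves to the reader --- the factorization $(a+b)^k-b^2(a+b)^{k-2}=(a+b)^{k-2}\,a\,(a+2b)\geq a^{k-1}\cdot a=a^k$ is exactly the missing detail, and it is clean. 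The only tacit assumption, shared with the paper, is that $u\geq 0$, which holds in the application.
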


\begin{proof}[Proof Sketch]
Consider the polynomial $\phi : u \mapsto u^k - \beta u^{k-2} - \alpha$.
The Descartes Rule of Signs implies that $\phi$ has exactly one positive root, say $u_+$.
Furthermore, $\phi(u) \leq 0$ for a positive number $u$ if and only if $u \leq u_+$ because $\phi(0) < 0$.
By direct calculation, one may verify that $u_{\star} = \alpha^{1/k} + \beta^{1/2}$ satisfies
$\phi(u_{\star}) > 0$, which means that $u_+ < u_{\star}$.
We conclude that $\phi(u) \leq 0$ implies $u \leq u_+ < u_{\star}$.
\end{proof}

\subsection{The Normalized Trace Moments}
\label{sec:trace-moments}

Let us commence the proof of Theorem~\ref{thm:sharp-moment}.
First, we introduce notation for the normalized trace moments of the matrix Gaussian series:
\begin{equation} \label{eqn:trace-mom-def}
\mu_{p} := \Expect \ntr \mtx{X}^{p}
\quad\text{for $p = 0, 1, 2, \dots$.}
\end{equation}
It is clear that $\mu_0 = 1$.  Since $\mtx{X}$ is a symmetric random variable, the odd trace moments
are zero:
$$
\mu_{2p+1} = 0
\quad\text{for $p = 0, 1, 2, \dots$.}
$$
It remains to calculate the even trace moments.

We can obtain the second moment from a simple argument:
\begin{equation} \label{eqn:mu2-computation}
\sum\nolimits_{i=1}^n \mtx{H}_i^2 = \Expect \mtx{X}^2
	= \left( \Expect \ntr \mtx{X}^2 \right) \cdot \Id
	= \mu_2 \cdot \Id.
\end{equation}
The first identity follows from a direct calculation using the definition~\eqref{eqn:matrix-gauss-series}
of the matrix Gaussian series.  The second identity is the strong isotropy hypothesis~\eqref{eqn:sharp-body-isotropy},
and the last relation is the definition of $\mu_2$.  Take the spectral norm of~\eqref{eqn:mu2-computation} to see that
\begin{equation} \label{eqn:mu2-sigma2}
\mu_2 = \sigma^2.
\end{equation}
We have identified the standard deviation parameter, defined in~\eqref{eqn:sharp-body-stats}.

\subsection{Representation of Higher-Order Moments}

The major challenge is to compute the rest of the even moments.  As usual,
the first step is to invoke Gaussian integration by parts.  For each
integer $p \geq 1$, Lemma~\ref{lem:trace-moments} implies that
$$
\mu_{2(p+1)} = \sum\nolimits_{q=0}^{2p} \sum\nolimits_{i=1}^n
\Expect \ntr \big[ \mtx{H}_i \mtx{X}^q \mtx{H}_i \mtx{X}^{2p-q} \big].
$$
We are considering $\mu_{2(p+1)}$ instead of $\mu_{2p}$ because it makes the
argument cleaner.  To analyze this expression, we will examine each index
$q$ separately and subject each one to the same treatment.

Fix an index $0 \leq q \leq 2p$.  First, we center both $\mtx{X}^q$ and $\mtx{X}^{2p-q}$
by adding and subtracting their expectations:
\begin{align*}
\sum\nolimits_{i=1}^n \Expect \ntr \big[ \mtx{H}_i \mtx{X}^q \mtx{H}_i \mtx{X}^{2p-q} \big]
	=& \sum\nolimits_{i=1}^n \ntr \big[ \mtx{H}_i \big(\Expect \mtx{X}^q \big) \mtx{H}_i \big(\Expect \mtx{X}^{2p-q}\big) \big] \\
	+& \sum\nolimits_{i=1}^n \Expect \ntr \big[ \mtx{H}_i \big(\mtx{X}^q - \Expect \mtx{X}^q\big) \mtx{H}_i \big(\mtx{X}^{2p-q} - \Expect \mtx{X}^{2p-q}\big) \big].
\end{align*}
The cross-terms vanish because each one has zero mean.  It is productive to think of the first
sum on the right-hand side as an approximation to the left-hand side, while the second sum is a perturbation.

Let us focus on the first sum on the right-hand side of the last display.  We can use the strong isotropy
hypothesis~\eqref{eqn:sharp-body-isotropy} to simplify this expression:
$$
\begin{aligned}
\sum\nolimits_{i=1}^n \ntr \big[ \mtx{H}_i \big(\Expect \mtx{X}^q \big) \mtx{H}_i \big(\Expect \mtx{X}^{2p-q}\big) \big]
	&= \sum\nolimits_{i=1}^n \ntr \big[ \mtx{H}_i \big( \big(\Expect  \ntr \mtx{X}^q \big) \cdot \Id \big)
	\mtx{H}_i \big( \big(\Expect  \ntr \mtx{X}^{2p-q} \big) \cdot \Id \big) \big] \\
	&= \left(\ntr \sum\nolimits_{i=1}^n \mtx{H}_i^2 \right) \big(\Expect \ntr \mtx{X}^q \big) \big(\Expect \ntr \mtx{X}^{2p-q} \big) \\
	&= \sigma^2 \cdot \mu_q \mu_{2p-q}.
\end{aligned}
$$
The last identity follows from~\eqref{eqn:mu2-computation} and~\eqref{eqn:mu2-sigma2}.
As a side note,
our motivation here is to imitate the moment identity that would hold if
$\mtx{X}$ and $\mtx{H}_i$ were free from each other, in the sense of free probability.

Finally, we combine the last three displays to reach
\begin{equation} \label{eqn:trace-mom-decomp}
\mu_{2(p+1)} = \sigma^2 \cdot \sum\nolimits_{q=0}^{p} \mu_{2q} \mu_{2(p-q)}
	+ \sum\nolimits_{q=1}^{2p-1} \sum\nolimits_{i=1}^n
	\Expect \ntr \big[ \mtx{H}_i \big(\mtx{X}^q - \Expect \mtx{X}^q\big) \mtx{H}_i \big(\mtx{X}^{2p-q} - \Expect \mtx{X}^{2p-q}\big) \big].
\end{equation}
Observe that we have modified the indexing of both sums.  This step
depends on the facts that $\mu_q = 0$ for odd $q$ and that $\mtx{X}^0 = \Id$.

\subsection{The Perturbation Term}
\label{sec:cov-term}

The next step in the argument is to bound the perturbation term in~\eqref{eqn:trace-mom-decomp}
in terms of the alignment parameter $w$, defined in~\eqref{eqn:sharp-body-stats}.
We will use the Gaussian covariance identity, Fact~\ref{fact:ou}.

To that end, let us explain how to write each summand in the perturbation term as a covariance.
Let $\mtx{H}$ be a real, diagonal matrix: $\mtx{H} = \operatorname{diag}( h_1, \dots, h_d )$.
Expanding the normalized trace, using $\alpha, \beta$ for coordinate indices, we find that
$$
\Expect \ntr \big[ \mtx{H} \big(\mtx{X}^q - \Expect \mtx{X}^q\big) \mtx{H} \big(\mtx{X}^{2p-q} - \Expect \mtx{X}^{2p-q}\big) \big]
	= \frac{1}{d} \sum\nolimits_{\alpha, \beta = 1}^d h_{\alpha} h_{\beta} \cdot
	\Expect \big[ \big(\mtx{X}^q - \Expect \mtx{X}^q\big)_{\alpha \beta}
	\big(\mtx{X}^{2p-q} - \Expect \mtx{X}^{2p-q}\big)_{\beta \alpha} \big].
$$
To apply the Gaussian covariance identity to the expectation, we introduce
a parameterized family $\{ \mtx{X}_t : t \geq 0 \}$ of random matrices where
$$
\mtx{X}_t := \sum\nolimits_{k=1}^n \big(\econst^{-t} \gamma_k + \sqrt{1 - \econst^{-2t}} \gamma'_k \big) \cdot \mtx{H}_k
\quad\text{where $\vct{\gamma}'$ is an independent copy of $\vct{\gamma}$.}
$$
Observe that $\mtx{X}$ and $\mtx{X}_t$ have the same distribution, although
they are dependent.  Fact~\ref{fact:ou} and Fact~\ref{fact:matrix-power} deliver 
$$
\begin{aligned}
\Expect \big[ \big(\mtx{X}^q - \Expect \mtx{X}^q\big)_{\alpha \beta} &
	\big(\mtx{X}^{2p-q} - \Expect \mtx{X}^{2p-q}\big)_{\beta \alpha} \big] \\
	&= \int_0^\infty \frac{\diff{t}}{\econst^t} \sum\nolimits_{j=1}^n \Expect \bigg[
	\bigg( \sum\nolimits_{r=0}^{q-1} \mtx{X}^r \mtx{H}_j \mtx{X}^{q-1-r} \bigg)_{\alpha \beta}
	\bigg( \sum\nolimits_{s=0}^{2p-q-1} \mtx{X}_t^s \mtx{H}_j \mtx{X}_t^{2p-q-1-s} \bigg)_{\beta \alpha} \bigg] \\
	&= \sum\nolimits_{r=0}^{q-1} \sum\nolimits_{s=0}^{2p-q-1} 
	\int_0^\infty \frac{\diff{t}}{\econst^t} \sum\nolimits_{j=1}^n \Expect \big[
	\big( \mtx{X}^r \mtx{H}_j \mtx{X}^{q-1-r} \big)_{\alpha \beta}
	\big( \mtx{X}_t^s \mtx{H}_j \mtx{X}_t^{2p-q-1-s} \big)_{\beta \alpha} \big].
\end{aligned}
$$
Combining these formulas and expressing the result in terms of the normalized trace again,
we find that
\begin{multline*}
\Expect \ntr \big[ \mtx{H} \big(\mtx{X}^q - \Expect \mtx{X}^q\big) \mtx{H} \big(\mtx{X}^{2p-q} - \Expect \mtx{X}^{2p-q}\big) \big] \\
	= \sum\nolimits_{r=0}^{q-1} \sum\nolimits_{s=0}^{2p-q-1} \int_0^\infty \frac{\diff{t}}{\econst^t} \sum\nolimits_{j=1}^n
	\Expect \ntr \big[ \mtx{H} \big( \mtx{X}^r \mtx{H}_j \mtx{X}^{q-1-r} \big)
	\mtx{H} \big( \mtx{X}_t^s \mtx{H}_j \mtx{X}_t^{2p-q-1-s} \big) \big].
\end{multline*}
In fact, this expression is valid for any Hermitian matrix $\mtx{H}$ because of the unitary
invariance of the trace.
Summing the last identity over $\mtx{H}= \mtx{H}_i$, we reach
\begin{multline} \label{eqn:perturbn}
\sum\nolimits_{i=1}^n
\Expect \ntr \big[ \mtx{H}_i \big(\mtx{X}^q - \Expect \mtx{X}^q\big) \mtx{H}_i \big(\mtx{X}^{2p-q} - \Expect \mtx{X}^{2p-q}\big) \big] \\
	= \sum\nolimits_{r=0}^{q-1} \sum\nolimits_{s=0}^{2p-q-1} \int_0^\infty \frac{\diff{t}}{\econst^t} \sum\nolimits_{i,j=1}^n
	\Expect \ntr \big[ \mtx{H}_i \mtx{X}^r \mtx{H}_j \mtx{X}^{q-1-r}
	\mtx{H}_i \mtx{X}_t^s \mtx{H}_j \mtx{X}_t^{2p-q-1-s} \big].
\end{multline}
At this point, the alignment parameter $w$ starts to become visible.

\subsection{Finding the Matrix Alignment Parameter}

Our next goal is to control the expression~\eqref{eqn:perturbn} in terms of the alignment
parameter $w$.  To do so, we use the interpolation result, Proposition~\ref{prop:interpolation},
to bound the sum over $(i, j)$.  For each choice of indices $(q,r,s)$, we obtain the estimate
\begin{align*}
\abs{ \sum\nolimits_{i,j =1}^n \Expect \ntr \big[ \mtx{H}_i \mtx{X}^r \mtx{H}_j \mtx{X}^{q-1-r}
	\mtx{H}_i \mtx{X}_t^s \mtx{H}_j \mtx{X}_t^{2p-q-1-s} \big] }
	\leq \max\bigg\{ & \Expect \max_{\mtx{Q}_{\ell}} \abs{ \sum\nolimits_{i,j=1}^n  \ntr \big[
		\mtx{H}_i \mtx{Q}_1 \mtx{X}^{2p-2} \mtx{H}_j \mtx{Q}_2 \mtx{H}_i \mtx{Q}_3 \mtx{H}_j \mtx{Q}_4 \big] }, \\
		& \Expect \max_{\mtx{Q}_{\ell}} \abs{ \sum\nolimits_{i,j=1}^n \ntr \big[
		\mtx{H}_i \mtx{Q}_1 \mtx{H}_j \mtx{Q}_2 \mtx{X}^{2p-2} \mtx{H}_i \mtx{Q}_3 \mtx{H}_j \mtx{Q}_4 \big] }, \\
		& \Expect \max_{\mtx{Q}_{\ell}} \abs{ \sum\nolimits_{i,j=1}^n \ntr \big[
		\mtx{H}_i \mtx{Q}_1 \mtx{H}_j \mtx{Q}_2 \mtx{H}_i \mtx{Q}_3 \mtx{X}_t^{2p-2} \mtx{H}_j \mtx{Q}_4 \big] }, \\
		& \Expect \max_{\mtx{Q}_{\ell}} \abs{ \sum\nolimits_{i,j=1}^n \ntr \big[
		\mtx{H}_i \mtx{Q}_1 \mtx{H}_j \mtx{Q}_2 \mtx{H}_i \mtx{Q}_3 \mtx{H}_j \mtx{Q}_4 \mtx{X}_t^{2p-2} \big] } \bigg\}.
\end{align*}
Each random unitary matrix $\mtx{Q}_\ell$ commutes with the corresponding random matrix $\mtx{X}$ or $\mtx{X}_t$.

As in Section~\ref{sec:find-interleave}, we can bound each term in the maximum in the same fashion.
For example, consider the fourth term:
\begin{align*}
\Expect \max_{\mtx{Q}_{\ell}} \abs{ \sum\nolimits_{i,j=1}^n  \ntr \big[
		\mtx{H}_i \mtx{Q}_1 \mtx{H}_j \mtx{Q}_2 \mtx{H}_i \mtx{Q}_3 \mtx{H}_j \mtx{Q}_4 \mtx{X}_t^{2p-2} \big] }
		&\leq \Expect \max_{\mtx{Q}_{\ell}} \bigg[
		\smnorm{}{ \sum\nolimits_{i,j=1}^n \mtx{H}_i \mtx{Q}_1 \mtx{H}_j \mtx{Q}_2 \mtx{H}_i \mtx{Q}_3 \mtx{H}_j }
		\cdot \big( \ntr \mtx{X}_t^{2p-2} \big) \bigg] \\
		&\leq \max_{\mtx{Q}_{\ell}}
		\smnorm{}{ \sum\nolimits_{i,j=1}^n \mtx{H}_i \mtx{Q}_1 \mtx{H}_j \mtx{Q}_2 \mtx{H}_i \mtx{Q}_3 \mtx{H}_j }
		\cdot \big( \Expect \ntr \mtx{X}_t^{2p-2} \big) \\
		&= w^4 \cdot \mu_{2(p-1)}.
\end{align*}
The first step is H{\"o}lder's inequality for the trace, and the second step is H{\"o}lder's inequality
for the expectation.  In the last line, we recall that $\mtx{X}_t$ has the same distribution as $\mtx{X}$
to identify $\mu_{2(p-1)}$.  Finally, we recognize the matrix alignment parameter $w$, defined
in~\eqref{eqn:sharp-body-stats}.

In summary, we have shown that
$$
\abs{ \sum\nolimits_{i,j =1}^n \Expect \ntr \big[ \mtx{H}_i \mtx{X}^r \mtx{H}_j \mtx{X}^{q-1-r}
	\mtx{H}_i \mtx{X}_t^s \mtx{H}_j \mtx{X}_t^{2p-q-1-s} \big] }
	\leq w^4 \cdot \mu_{2(p-1)}.
$$
Introduce this bound into~\eqref{eqn:perturbn} to arrive at
$$
\abs{ \sum\nolimits_{i=1}^n
\Expect \ntr \big[ \mtx{H}_i \big(\mtx{X}^q - \Expect \mtx{X}^q\big) \mtx{H}_i \big(\mtx{X}^{2p-q} - \Expect \mtx{X}^{2p-q}\big) \big] }
	\leq q(2p-q) \cdot w^4 \cdot \mu_{2(p-1)}
	\leq p^2 \cdot w^4 \cdot \mu_{2(p-1)}.
$$
We have used the numerical inequality $u (a - u) \leq a^2/4$, valid for $u \in \R$.
Finally, we sum this expression over the index $q$ to conclude that
\begin{equation} \label{eqn:covariance-term}
\abs{ \sum\nolimits_{q=0}^{2p-1} \sum\nolimits_{i=1}^n
\Expect \ntr \big[ \mtx{H}_i \big(\mtx{X}^q - \Expect \mtx{X}^q\big) \mtx{H}_i \big(\mtx{X}^{2p-q} - \Expect \mtx{X}^{2p-q}\big) \big] }
	\leq 2p^3 \cdot w^4 \cdot \mu_{2(p-1)}.
\end{equation}
This is the required bound for the perturbation term in~\eqref{eqn:trace-mom-decomp}.

\subsection{A Recursion for the Trace Moments}

In view of~\eqref{eqn:trace-mom-decomp} and~\eqref{eqn:covariance-term}, we have shown that
\begin{equation} \label{eqn:my-recurse-ul}
\mu_{2(p+1)} \quad=\quad \sigma^2 \cdot \sum\nolimits_{q=0}^{p} \mu_{2q} \mu_{2(p-q)}
	\quad\pm\quad 2p^3 \cdot w^4 \cdot \mu_{2(p-1)}
	\quad\text{for $p = 1, 2, 3, \dots$.}
\end{equation}
We have written $\pm$ to indicate that the expression contains both a lower bound and
an upper bound for the normalized trace moment $\mu_{2(p+1)}$.

In the next two sections, we will solve this recursion to obtain explicit bounds on
the trace moments.  First, we obtain the upper bound
\begin{equation} \label{eqn:my-k2-upper}
\mu_{2p}^{1/(2p)} \leq \Cat_p^{1/(2p)} \cdot \sigma  + 2^{1/4} p^{5/4} \cdot w
\quad\text{for $p = 1, 2, 3, \dots$.}
\end{equation}
This result gives us a Khintchine-type inequality.
Afterward, assuming $p^{7/4} w \leq 0.7 \sigma$, we establish the lower bound
\begin{equation} \label{eqn:my-k2-lower}
\mu_{2p}^{1/(2p)} \geq \Cat_p^{1/(2p)} \cdot \sigma \cdot \big[ 1 - (pw/\sigma)^4 \big]_+^{1/(2p)}
\quad\text{for $p = 1, 2, 3, \dots$.}
\end{equation}
Together these estimates yield the statement of Theorem~\ref{thm:sharp-moment}.

\subsection{Solving the Recursion: Upper Bound}

We begin with the proof of the upper bound~\eqref{eqn:my-k2-upper}.
The first step in the argument is to remove the lag term $\mu_{2(p-1)}$
from the recursion~\eqref{eqn:my-recurse-ul} using moment comparison.
Fix an integer $p \geq 1$.  Observe that
$$
\mu_{2(p-1)} = \Expect \ntr \mtx{X}^{2(p-1)}
	\leq \Expect \big( \ntr \mtx{X}^{2(p+1)} \big)^{(p-1)/(p+1)}
	\leq \big( \Expect \ntr \mtx{X}^{2(p+1)} \big)^{(p-1)/(p+1)}
	= \mu_{2(p+1)}^{(p-1)/(p+1)}.
$$
The first inequality holds because $q \mapsto (\ntr \mtx{A}^q)^{1/q}$ is increasing
for any positive-semidefinite matrix $\mtx{A}$, while the second inequality is Lyapunov's.
Introduce this estimate into the recursion~\eqref{eqn:my-recurse-ul} to obtain
$$
\mu_{2(p+1)} \leq \sigma^2 \cdot \sum\nolimits_{q=0}^p \mu_{2q} \mu_{2(p-q)}
	+ 2 p^3 w^4 \cdot \mu_{2(p+1)}^{(p-1)/(p+1)}.
$$
This is a polynomial inequality of the form $u^{p+1} \leq \alpha + \beta u^{p-1}$,
so Proposition~\ref{prop:poly-ineq} ensures that $u \leq \alpha^{1/(p+1)} + \beta^{1/2}$.
In other words,
\begin{equation} \label{eqn:my-recurse-upper}
\mu_{2(p+1)}^{1/(p+1)} \leq \left( \sigma^2 \cdot \sum\nolimits_{q=0}^p \mu_{2q} \mu_{2(p-q)} \right)^{1/(p+1)}
	+ \sqrt{2} \, p^{3/2} \cdot w^2
	\quad\text{for $p = 1, 2, 3, \dots$.}
\end{equation}
Using this formula, we will apply induction to prove that
\begin{equation} \label{eqn:my-recurse-upper-soln}
\mu_{2p}^{1/p} \leq \Cat_p^{1/p} \cdot \sigma^2  + \sqrt{2} \, p^{5/2} \cdot w^2
\quad\text{when $p = 1, 2, 3, \dots$.}
\end{equation}
The stated result~\eqref{eqn:my-k2-upper} follows from~\eqref{eqn:my-recurse-upper-soln}
once we take the square root and invoke subadditivity.

Let us commence the induction.
The formula~\eqref{eqn:my-recurse-upper-soln} holds for $p = 1$ because $\mu_2 = \sigma^2$,
as noted in~\eqref{eqn:mu2-sigma2}.
Assuming that the bound~\eqref{eqn:my-recurse-upper-soln} holds for each integer $p$ in the range
$1, 2, 3, \dots, r$, we will verify that the same bound is also valid for $p = r + 1$.
For any integer $q$ in the range $1 \leq q \leq r$, the bound~\eqref{eqn:my-recurse-upper-soln}
implies that
\begin{equation} \label{eqn:recurse-upper-intermed}
\mu_{2q} \leq  \Cat_q \cdot \sigma^{2q} \cdot \left( 1 + \frac{\sqrt{2} \, q^{5/2} w^2}{\Cat_q^{1/q} \sigma^2} \right)^{q}
	\leq \Cat_q \cdot \sigma^{2q} \cdot \exp\left( \frac{\sqrt{2} \, q^{7/2} w^2}{\sigma^2} \right).
\end{equation}
Using the definition~\eqref{eqn:cat-defn} of the Catalan numbers,
one may verify that $q \mapsto q^{5/2} \Cat_q^{-1/q}$ is increasing,
so
$$
\mu_{2q} \leq  \Cat_q \cdot \sigma^{2q} \cdot \left( 1 + \frac{\sqrt{2}\, (r+1)^{5/2}  w^2}{\Cat_{r+1}^{1/(r+1)} \sigma^2} \right)^{q}
\quad\text{for $q = 0, 1, 2, \dots, r$.}
$$
The case $q = 0$ follows by inspection.
Now, the latter bound and the recursion~\eqref{eqn:cat-recurse} for Catalan numbers together imply that
$$
\sigma^2 \cdot \sum\nolimits_{q=0}^{r} \mu_{2q} \mu_{2(r-q)}
	\leq \Cat_{r+1} \cdot \sigma^{2(r+1)} \cdot \left(1 + \frac{\sqrt{2} \, (r+1)^{5/2}  w^2}{\Cat_{r+1}^{1/(r+1)} \sigma^2} \right)^{r}.
$$
Take the $r + 1$ root to determine that
$$
\begin{aligned}
\left( \sigma^2 \cdot \sum\nolimits_{q=0}^{r} \mu_{2q} \mu_{2(r-q)} \right)^{1/(r+1)}
	&\leq \Cat_{r+1}^{1/(r+1)} \cdot \sigma^2 \cdot \left( 1 + \frac{r}{r+1} \cdot \frac{\sqrt{2} \, (r+1)^{5/2}  w^2}{\Cat_{r+1}^{1/(r+1)} \sigma^2} \right), \\
	&= \Cat_{r+1}^{1/(r+1)} \cdot \sigma^2 + \sqrt{2}\, (r+1)^{5/2} w^2 - \sqrt{2} \, (r+1)^{3/2} w^2. 
\end{aligned}
$$
We have used the numerical inequality $(1+x)^\alpha \leq 1 + \alpha x$, valid for $0 \leq \alpha \leq 1$ and $x \geq 0$.  Combine this estimate with the recursive bound~\eqref{eqn:my-recurse-upper} for $p = r$ to obtain
$$
\mu_{2(r+1)}^{1/(r+1)} \leq \Cat_{r+1}^{1/(r+1)} \cdot \sigma^2 + \sqrt{2}\, (r+1)^{5/2} w^2.
$$
We see that~\eqref{eqn:my-recurse-upper-soln} holds for $p = r + 1$, and the induction may proceed.

\subsection{Solving the Recursion: Lower Bound}

We turn to the proof of the lower bound~\eqref{eqn:my-k2-lower}.
Assuming that $p^{7/4} w \leq 0.7 \sigma$,
we will use induction to show that
\begin{equation} \label{eqn:my-recurse-lower-soln}
\mu_{2p} \geq \sigma^{2p} \cdot \Cat_{p} \cdot \big[1 - (p w/\sigma)^4 \big]_+
\quad\text{for $p = 0, 1, 2, 3, \dots$.}
\end{equation}
The result~\eqref{eqn:my-k2-lower} follows once we take the $(2p)$th root.

To begin the induction, recall that $\mu_0 = 1$, so the formula~\eqref{eqn:my-recurse-lower-soln}
is valid for $p = 0$.  Suppose now that~\eqref{eqn:my-recurse-lower-soln} is valid for
each integer $p$ in the range $0, 1, 2, \dots, r$.  We will verify the formula for
$p = r + 1$.  
The lower branch of the recursion~\eqref{eqn:my-recurse-ul} states that
$$
\mu_{2(r+1)} \geq \sigma^2 \cdot \sum\nolimits_{q=0}^r \mu_{2q} \mu_{2(r-q)}
	- 2 r^3 w^4 \mu_{2(r-1)}.
$$
The induction hypothesis~\eqref{eqn:my-recurse-lower-soln} yields
$$
\begin{aligned}
\sigma^2 \cdot \sum\nolimits_{q=0}^r \mu_{2q} \mu_{2(r-q)}
	&\geq \sigma^{2(p+1)} \cdot \sum\nolimits_{q=0}^r \Cat_q \Cat_{r-q}
	\cdot \big[1 - (qw/\sigma)^4 - ((r-q) w/ \sigma)^4 \big] \\
	&\geq \Cat_{r+1} \cdot \sigma^{2(p+1)} \cdot (rw/\sigma)^4.
\end{aligned}
$$
We have used the fact that $q \mapsto q^4 + (r-q)^4$ achieves its maximum value on $[0, r]$
at one of the endpoints because of convexity.  We also applied the recursive formula~\eqref{eqn:cat-recurse}
for the Catalan numbers.  The bound~\eqref{eqn:recurse-upper-intermed} implies that
$$
\mu_{2(r-1)} \leq 2 \, \Cat_{r-1} \sigma^{2 (r-1) }
\quad\text{when}\quad
\frac{r^{7/4} w}{\sigma} \leq \sqrt{\frac{\log 2}{\sqrt{2}}} \approx 0.7.
$$
Therefore,
$$
2 r^3 w^4 \mu_{2(r-1)}
	\leq 4\, \Cat_{r-1} \cdot \sigma^{2(r-1)} \cdot r^3 w^4 
	\leq 4\,\Cat_{r+1} \cdot \sigma^{2(r+1)} \cdot r^3 (w/\sigma)^4.
$$
The second inequality holds because the Catalan numbers are nondecreasing.
Combine the last three displays to arrive at
$$
\mu_{2(r+1)} \geq \Cat_{r+1} \cdot \sigma^{2(r+1)} \cdot \big[1 - (r^4 + 4r^3)(w/\sigma)^4 \big]
	\geq \Cat_{r+1} \cdot \sigma^{2(r+1)} \cdot \big[1 - (r+1)^4(w/\sigma)^4 \big].
$$
We have verified the formula~\eqref{eqn:my-recurse-lower-soln} for $p = r+1$,
which completes the proof.

\appendix

\section{Interpolation Results}
\label{sec:interpolation}

In this appendix, we establish Proposition~\ref{prop:interpolation}, the
interpolation inequality for a multilinear function of a random matrix,
whose proof appears below in Appendix~\ref{app:interp-multilin}.

\subsection{Multivariate Complex Interpolation}

The interpolation result we use in the body of the paper
is a consequence of a more general theorem on interpolation
for a function of several complex variables.

\begin{proposition}[Multivariate Complex Interpolation] \label{prop:multivar-interp}
Let $k$ be a natural number.  For a positive number $\alpha$, define the simplicial prism
$$
\Delta_k(\alpha) := \left\{ (c_1, \dots, c_k) \in \C^k :
\text{$\real c_i \geq 0$ for each $i$ and $\sum\nolimits_{i=1}^k \real c_i \leq \alpha$} \right\}.
$$
Consider a bounded, continuous function $G : \Delta_k(\alpha) \to \C$.  For each pair
$(i, j)$ of distinct indices and each $\vct{c} \in \Delta_k(\alpha)$, assume that
$G$ has the analytic section property:
\begin{equation} \label{eqn:analytic-section}
z \mapsto G(c_1,\ \dots,\ c_i + z,\ \dots,\ c_j - z,\ \dots,\ c_k)
\quad\text{is analytic on $- \real c_i < \real z < \real c_j$.}
\end{equation}
Then, for each $\vct{z} \in \Delta_k(\alpha)$ with $\beta := \sum_{i=1}^n \real z_i > 0$,
\begin{equation} \label{eqn:multivar-interp}
\abs{ G( z_1, \ \dots,\ z_k ) }
	\leq \bigg( \prod\nolimits_{i=1}^k \ \sup\nolimits_{t_\ell \in \R} \ \abs{ G(\iunit t_1, \ \dots,\ \iunit t_{i-1},\ \beta + \iunit t_i,\ \iunit t_{i+1},\ \dots,\ \iunit t_k) }^{\real z_i} \bigg)^{1/\beta}.
\end{equation}
\end{proposition}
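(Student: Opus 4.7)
The plan is to proceed by induction on the number $k$ of variables, with Hadamard's three-lines theorem as the single-variable engine at each step. The analytic section property is designed for exactly this purpose: whenever two coordinates $i \neq j$ move in equal and opposite directions while the remaining coordinates stay frozen, $G$ becomes a bounded analytic function of one complex variable on the strip $-\real c_i < \real z < \real c_j$, and the two edges of this strip are precisely the sections of $\Delta_k(\alpha)$ obtained by driving one of the two moving coordinates onto the imaginary axis.

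The base case $k = 1$ is trivial because any $z_1$ with $\real z_1 = \beta$ already lies in the single face over which the right-hand side of~\eqref{eqn:multivar-interp} is computed. For the inductive step I would first dispose of the degenerate situation in which some $\real z_\ell^\star = 0$: the inductive hypothesis applied to the restriction $G(\,\cdot\,, \dots, \,\cdot\,, \iunit \imag z_\ell^\star, \,\cdot\,, \dots, \,\cdot\,)$, which inherits boundedness, continuity, and the analytic section property, already delivers the bound since the missing exponent $\real z_\ell^\star/\beta$ is zero. So we may assume every coordinate of $\vct{z}^\star$ has strictly positive real part. Picking the indices $1$ and $2$ and setting
$$
\Psi(w) := G(z_1^\star + w,\ z_2^\star - w,\ z_3^\star,\ \dots,\ z_k^\star),
$$
the analytic section property makes $\Psi$ bounded and analytic on the strip $-\real z_1^\star < \real w < \real z_2^\star$. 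Hadamard's theorem then bounds $|G(\vct{z}^\star)| = |\Psi(0)|$ by a weighted geometric mean of $M_- := \sup\{|\Psi(w)| : \real w = -\real z_1^\star\}$ and $M_+ := \sup\{|\Psi(w)| : \real w = \real z_2^\star\}$, with weights $\real z_2^\star/(\real z_1^\star + \real z_2^\star)$ and $\real z_1^\star/(\real z_1^\star + \real z_2^\star)$ respectively. On each of these two edges one of the first two coordinates has been forced purely imaginary while the other has acquired real part $\real z_1^\star + \real z_2^\star$, producing a $(k-1)$-variable slice whose coordinate real parts again sum to $\beta$; the inductive hypothesis applied to this slice controls $M_\pm$ by products of the face-suprema $N_i$ appearing in the statement.

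The final piece is a short exponent bookkeeping. On the left edge the slice bound contributes $(\real z_1^\star + \real z_2^\star)/\beta$ to the exponent of $N_2$ and $\real z_j^\star/\beta$ to each $N_j$ with $j \geq 3$; on the right edge the roles of $N_1$ and $N_2$ are swapped. Multiplying by Hadamard's weights, the exponent on $N_1$ collapses to $\real z_1^\star/\beta$, the exponent on $N_2$ to $\real z_2^\star/\beta$, and the exponent on each $N_j$ with $j \geq 3$ telescopes via $\real z_2^\star/(\real z_1^\star + \real z_2^\star) + \real z_1^\star/(\real z_1^\star + \real z_2^\star) = 1$ to $\real z_j^\star/\beta$, matching the right-hand side of~\eqref{eqn:multivar-interp}. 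The main obstacle is really the edge-case handling in the inductive step, because without the reduction in the first paragraph the Hadamard strip could collapse to zero width; beyond that, the remaining routine points — that restrictions and one-parameter slices of $G$ inherit the analytic section property, and that taking the supremum over a free parameter commutes with a pointwise inductive bound — are immediate from the definitions and from monotonicity of the supremum.
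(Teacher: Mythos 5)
Your argument is correct, and it is a valid variant of the paper's proof. Both proofs proceed by induction on $k$ with the Hadamard Three-Lines Theorem as the single-variable engine, but the decompositions are inverted. The paper first establishes the bivariate case $k=2$ as a separate stepping stone, and its inductive step freezes $z_k$, applies the $(k-1)$-variable hypothesis to $(z_1,\dots,z_{k-1})$ with $\beta' := \sum_{i<k}\real z_i$, and then uses the bivariate case to redistribute real part between each isolated coordinate $i < k$ and the frozen coordinate $k$; the degenerate case it must handle is $\beta' = 0$. You instead apply Hadamard directly to the pair $(z_1,z_2)$ at the outermost level, which on each edge of the strip produces a $(k-1)$-variable slice to which the inductive hypothesis applies; you only need $k=1$ as a base case, and your degeneracy reduction (deleting any coordinate with $\real z_\ell^\star = 0$ via the inductive hypothesis) guarantees the Hadamard strip is nondegenerate. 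The exponent bookkeeping you sketch is correct: Hadamard's weights $\real z_1^\star/(\real z_1^\star + \real z_2^\star)$ and $\real z_2^\star/(\real z_1^\star + \real z_2^\star)$ split the combined exponent on $N_1$ and $N_2$ appropriately, and they sum to one so the exponents on $N_j$ for $j \geq 3$ pass through unchanged. Your variant is arguably a bit cleaner because it dispenses with the separately stated bivariate lemma, at the modest cost of a slightly different degenerate-case reduction.
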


\noindent
We establish Proposition~\ref{prop:multivar-interp} in the next two
sections.  The argument relies on the same principles that support
standard univariate complex interpolation.  Although it seems likely
that a result of this form already appears in the literature, we were
not able to locate a reference.

\subsection{Preliminaries}

Proposition~\ref{prop:multivar-interp} depends
on Hadamard's Three-Lines Theorem~\cite[Prop.~9.1.1]{Gar07:Inequalities-Journey}.

\begin{proposition}[Three-Lines Theorem] \label{prop:three-lines}
Consider the vertical strip $\Delta_1(1)$ in the complex plane:
$$
\Delta_1(1) := \{ z \in \C : 0 \leq \real z \leq 1 \}.
$$
Consider a bounded, continuous function $f : \Delta_1(1) \to \C$ that is
analytic in the interior of $\Delta_1(1)$.  For each $\theta \in [0,1]$,
\begin{equation*} \label{eqn:three-lines}
\sup_{t \in \R} \ \abs{ \smash{f(\theta + \iunit t)} }
	\leq \sup_{t \in \R}\ \abs{ \smash{f(1 + \iunit t)} }^{\theta}
	\cdot \sup_{t \in \R} \ \abs{ \smash{f(\iunit t)} }^{1-\theta}.
\end{equation*}
\end{proposition}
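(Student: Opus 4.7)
The plan is to establish this classical Three-Lines Theorem via the maximum modulus principle applied to a suitably normalized and damped auxiliary function. Set $M_0 := \sup_{t \in \R} |f(\iunit t)|$ and $M_1 := \sup_{t \in \R} |f(1 + \iunit t)|$. By the routine reduction of replacing $M_i$ by $M_i + \epsilon$ (and sending $\epsilon \to 0$ at the end), it suffices to treat the case $M_0, M_1 > 0$.

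Next, I introduce the auxiliary function $g(z) := f(z)\, M_0^{z-1}\, M_1^{-z}$, where the exponential factors are defined as entire functions of $z$ via the real logarithms of $M_0$ and $M_1$. Then $g$ is analytic on the interior of the strip $\Delta_1(1)$, continuous on the closure, and bounded there (since $f$ is bounded and the factors $M_0^{z-1}$, $M_1^{-z}$ are bounded on any vertical strip). A direct evaluation shows $|g(\iunit t)| \leq 1$ and $|g(1 + \iunit t)| \leq 1$, so the target conclusion $|f(\theta + \iunit t)| \leq M_0^{1-\theta} M_1^{\theta}$ reduces to showing $|g(z)| \leq 1$ throughout the strip. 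The main obstacle is that the classical maximum modulus principle requires either a bounded domain or uniform decay at infinity, neither of which $g$ possesses on its own.

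To overcome this, I would multiply $g$ by the damping factor $h_\epsilon(z) := \exp\bigl( \epsilon(z^2 - 1) \bigr)$ for $\epsilon > 0$, whose modulus equals $\exp\bigl( \epsilon((\real z)^2 - (\imag z)^2 - 1) \bigr)$. The exponent is nonpositive on both boundary lines $\real z = 0$ and $\real z = 1$, so $|g \cdot h_\epsilon| \leq 1$ on the vertical sides of the strip. Moreover, because $g$ is bounded on the strip, there exists $R = R(\epsilon)$ large enough that $|g(z) h_\epsilon(z)| \leq 1$ on the horizontal sides of the finite rectangle $[0, 1] \times [-R, R]$. Applying the ordinary maximum modulus principle on that compact rectangle yields $|g \cdot h_\epsilon| \leq 1$ on its interior, hence on any fixed point $z$ of the strip for all sufficiently large $R$. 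Sending $\epsilon \to 0$ at this fixed $z$ recovers $|g(z)| \leq 1$, which unpacks to the claim and finishes the proof after taking the supremum over $t$.

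The genuine subtlety lies in the design of $h_\epsilon$: the exponent must be quadratic in $z$ (rather than linear) so that its real part becomes very negative as $|\imag z|$ grows in both directions, while simultaneously remaining nonpositive on both boundary lines. A purely linear exponential $e^{\epsilon z}$ would decay in only one imaginary direction and would violate the boundary bound on one side, so the use of $z^2 - 1$ (or any suitable even analog) is what makes the Phragmén--Lindelöf-type argument succeed.
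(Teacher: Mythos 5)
Your proof is a correct and complete rendition of the standard Phragm\'en--Lindel\"of argument for the Hadamard Three-Lines Theorem: normalize by $M_0^{z-1} M_1^{-z}$, damp with $\exp(\epsilon(z^2-1))$ to gain decay as $\abs{\imag z} \to \infty$, apply the maximum modulus principle on a large finite rectangle, and then let $\epsilon \to 0$. Note, however, that the paper does not itself prove Proposition~\ref{prop:three-lines}; it states the result and cites it to \cite[Prop.~9.1.1]{Gar07:Inequalities-Journey}, so there is no internal argument to compare yours against. Two points in your write-up are correct but compressed and worth spelling out if this were to be made rigorous: first, the preliminary replacement of $M_i$ by $M_i + \epsilon$ works because $M_i + \epsilon$ is still an upper bound for $\abs{f}$ on the respective boundary line, so the entire argument runs with these larger constants and one sends $\epsilon \to 0$ only at the very end (no circularity); second, the quantifier order matters --- for each fixed interior point $z$ and each fixed $\epsilon > 0$ you choose $R$ so large that the horizontal edges of $[0,1]\times[-R,R]$ carry $\abs{g h_\epsilon} \leq 1$ and $z$ lies inside, obtain $\abs{g(z) h_\epsilon(z)} \leq 1$ from the maximum modulus principle on that compact rectangle, and only then send $\epsilon \to 0$ at the fixed $z$. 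Your closing remark about why the damping exponent must be quadratic (so that $\real(z^2)$ is driven to $-\infty$ symmetrically in $\imag z$ while staying nonpositive on both vertical boundary lines) is exactly the right way to understand the construction.
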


\noindent
As we will see, this result delivers the $k = 2$ case of Proposition~\ref{prop:multivar-interp}.

\subsection{Proof of Proposition~\ref{prop:multivar-interp}}

The proof of the multivariate interpolation result, Proposition~\ref{prop:multivar-interp},
follows by induction on the number $k$ of arguments.

Let us begin with the base cases.  When the function $G$ has one argument only,
the inequality~\eqref{eqn:multivar-interp} is obviously true.  Next, consider a bivariate
function $G_2 : \Delta_2(\alpha) \to \C$ that is bounded and continuous
and has the analytic section property~\eqref{eqn:analytic-section}.
Fix a point $\vct{z} \in \Delta_2(\alpha)$ with $\beta := \real z_1 + \real z_2 > 0$.
Define the bounded, continuous function
$$
f(y) := G_2( \beta y + \iunit \imag z_1, \ \beta (1 - y) + \iunit \imag z_2 )
\quad\text{for $y \in \Delta_1(1)$.}
$$
The assumption~\eqref{eqn:analytic-section} implies that $f$ is analytic on $0 < \real y < 1$.
Select $\theta = \real z_1/\beta$, which gives $1 - \theta = \real z_2/\beta$.
An application of the Three-Lines Theorem, Proposition~\ref{prop:three-lines},
implies that
$$
\abs{ G_2( z_1, \ z_2 ) }
	= \abs{ f(\theta) }
	\leq \sup\nolimits_{t \in \R} \ \abs{ f( 1 + \iunit t ) }^{\theta}
	\cdot \sup\nolimits_{t \in \R} \ \abs{ f( \iunit t ) }^{1-\theta}.
$$
Introducing the definition of $f$ and simplifying,
\begin{align} \label{eqn:interp-case2}	
\abs{ G_2( z_1, \ z_2 ) }
	&\leq \sup\nolimits_{t \in \R} \ \abs{ G_2( \beta ( 1 + \iunit t) + \iunit \imag z_1, \ -\beta \iunit t + \iunit \imag z_2 ) }^{\real z_1 / \beta} \notag \\
	&\qquad\times \sup\nolimits_{t \in \R}\ \abs{ G_2( \beta \iunit t + \iunit \imag z_1, \ \beta ( 1 - \iunit t ) + \iunit \imag z_2) }^{\real z_2 / \beta} \notag \\
	&\leq \bigg( \sup\nolimits_{s_1, s_2 \in \R}\ \abs{ G_2( \beta + \iunit s_1, \ \iunit s_2 ) }^{\real z_1}
	\cdot  \sup\nolimits_{s_1, s_2 \in \R}\ \abs{ G_2( \iunit s_1, \  \beta + \iunit s_2 ) }^{\real z_2}
	\bigg)^{1/\beta}.
\end{align}
This is the $k = 2$ case of Proposition~\ref{prop:multivar-interp}.

Fix a positive integer $k$, and suppose that we have established the inequality~\eqref{eqn:multivar-interp}
for functions with $k - 1$ arguments.  In other words, assume that $G_{k-1} : \Delta_{k-1}(\alpha') \to \C$
is bounded and continuous, and it has the analytic section property~\eqref{eqn:analytic-section}.  Then,
for $\vct{z} \in \Delta_{k-1}(\alpha')$ with $\beta' := \sum_{i=1}^{k-1} \real z_i > 0$,
\begin{equation} \label{eqn:interp-induction}
\abs{ G_{k-1}(z_1, \ \dots,\ z_{k-1}) }
	\leq \bigg( \prod\nolimits_{i=1}^{k-1} \sup\nolimits\nolimits_{t_\ell \in \R} \
	\abs{ G_{k-1}(\iunit t_1,\ \dots,\ \beta' + \iunit t_i,\ \dots,\ \iunit t_{k-1}) }^{\real z_i} \bigg)^{1/\beta'}.
\end{equation}
We need to extend this result to functions with $k$ variables.

Consider a bounded, continuous function $G_k : \Delta_k(\alpha) \to \C$ with the analytic
section property~\eqref{eqn:analytic-section}.  Fix a complex vector
$\vct{z} \in \Delta_k(\alpha)$ with $\beta := \sum\nolimits_{i=1}^k \real z_i > 0$.
Define the number $\beta' := \sum_{i=1}^{k-1} \real z_i$.
When $\beta' = 0$, the formula~\eqref{eqn:multivar-interp}
is trivial for $G = G_k$ because $\real z_1 = \dots = \real z_{k-1} = 0$.
Therefore, we may assume that $\beta' > 0$.
Introduce the function
$$
G_{k-1}(y_1,\ \dots,\ y_{k-1}) := G_k( y_1,\ \dots,\ y_{k-1},\ z_k)
\quad\text{for $\vct{y} \in \Delta_{k-1}(\beta')$.}
$$
One may verify that $G_{k-1}$ inherits boundedness, continuity, and analytic
sections from $G_k$.  Therefore, the induction hypothesis~\eqref{eqn:interp-induction}
gives
\begin{equation} \label{eqn:interp-k-1-vars}
\abs{ G_k(z_1,\ \dots,\ z_{k-1}, \ z_k) }
	\leq \bigg( \prod\nolimits_{i=1}^{k-1} \sup\nolimits_{t_\ell \in \R}\ 
	\abs{ G_k(\iunit t_1, \ \dots, \ \beta' + \iunit t_i, \ \dots,\ \iunit t_{k-1}, \ z_k) }^{\real z_i} \bigg)^{1 / \beta'}. 
\end{equation}
For each fixed choice of the index $i$ and of the numbers $t_1, \dots, t_{k-1} \in \R$,
consider the function
$$
G_2( y_i, \ y_k ) := G_k( \iunit t_1, \ \dots, \ y_i, \ \dots,\ \iunit t_{k-1},\ y_k)
\quad\text{for $(y_i, y_k) \in \Delta_2(\alpha)$.}
$$
Since $\beta' + \real z_k = \beta$, the bivariate case~\eqref{eqn:interp-case2} provides that
\begin{align} \label{eqn:interp-k-vars-partial}
\abs{ G_k( \iunit t_1, \ \dots, \ \beta + \iunit t_i, \ \dots,\  \iunit t_{k-1},\ z_k) }
	\leq\ &\sup\nolimits_{s_i, s_k \in \R} \ \abs{ G_k( \iunit t_1, \ \dots, \ \beta + \iunit s_i, \ \dots,\ \iunit t_{k-1},\ \iunit s_k) }^{\beta'/\beta} \notag \\
	& \times \sup\nolimits_{s_i, s_k \in \R} \ \abs{ G_k( \iunit t_1, \ \dots, \ \iunit s_i, \ \dots,\ \iunit t_{k-1},\ \beta + \iunit s_k) }^{1 - \beta'/\beta}.
\end{align}
Combine the bounds~\eqref{eqn:interp-k-1-vars} and~\eqref{eqn:interp-k-vars-partial} to reach
\begin{align*}
\abs{ G_k(z_1, \ \dots,\ z_{k-1},\ z_k) }
	\leq\ & \bigg( \prod\nolimits_{i=1}^{k-1} \sup\nolimits_{t_\ell \in \R}\ 
	\abs{ G_k( \iunit t_1, \ \dots, \ \beta + \iunit t_i, \ \dots,\ \iunit t_k) }^{\real z_i} \bigg)^{1/\beta}
	 \\
	& \times \bigg( \prod\nolimits_{i=1}^{k-1} \sup\nolimits_{t_\ell \in \R}\ 
	\abs{ G_k( \iunit t_1, \ \dots, \ \iunit t_i, \ \dots,\ \beta + \iunit t_k) }^{\real z_i} \bigg)^{(\beta - \beta')/(\beta' \beta)}
\end{align*}
Since $\beta' = \sum_{i=1}^{k-1} \real z_i$ and $\beta - \beta' = \real z_k$, we see that the second product
has the same form as the $i = k$ term of the first product.  Thus, 
$$
\abs{ G_k(z_1, \ \dots,\ z_k) }
	\leq \bigg( \prod\nolimits_{i=1}^{k} \sup\nolimits_{t_\ell \in \R}\ 
	\abs{ G_k( \iunit t_1, \ \dots, \ \beta + \iunit t_i, \ \dots, \iunit t_k) }^{\real z_i} \bigg)^{1/\beta}.
$$
This step completes the induction.  We have established Proposition~\ref{prop:multivar-interp}.

\subsection{Interpolation for a Multilinear Function of Random Matrices}
\label{app:interp-multilin}

We are now prepared to establish the interpolation result,
Proposition~\ref{prop:interpolation}, for a multilinear
function of random matrices.  We will actually establish
a somewhat more precise version, which we state here.

\begin{proposition}[Refined Multilinear Interpolation] \label{prop:interp-fancy}
Suppose that $F : (\mathbb{M}_d)^k \to \C$ is a multilinear function.
Fix nonnegative integers $\alpha_1, \dots, \alpha_k$ with $\sum_{i=1}^k \alpha_k = \alpha$.
Let $\mtx{Y}_i \in \mathbb{H}_d$ be random Hermitian matrices, not necessarily independent,
for which $\Expect \norm{ \mtx{Y}_i }^\alpha < \infty$.  Then
$$
\abs{ \Expect F\big( \mtx{Y}_1^{\alpha_1}, \ \dots,\ \mtx{Y}_k^{\alpha_k} \big) }
	\leq \left( \prod\nolimits_{i=1}^k  \bigg[ \Expect
	\max_{\mtx{Q}_{\ell}} \abs{ F\big( \mtx{Q}_1, \ \dots,\ \mtx{Q}_{i-1},\ 
	\mtx{Q}_i \mtx{Y}_i^{\alpha},\ \mtx{Q}_{i+1},\ \dots,\ \mtx{Q}_k \big) } \bigg]^{\alpha_i} \right)^{1/\alpha}.
$$
Each $\mtx{Q}_{\ell}$ is a unitary matrix that commutes with $\mtx{Y}_{\ell}$.
\end{proposition}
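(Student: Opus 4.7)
The plan is to reduce Proposition~\ref{prop:interp-fancy} to the multivariate complex interpolation bound, Proposition~\ref{prop:multivar-interp}, by constructing a jointly analytic family of matrix-valued functions $\mtx{R}_i(z)$ that interpolate between the integer power $\mtx{Y}_i^{\alpha_i}$ at $z = \alpha_i$ and an expression of the form ``unitary $\times \mtx{Y}_i^{\alpha}$'' on the distinguished face $\real z = \alpha$ of the simplicial prism $\Delta_k(\alpha)$. Concretely, I set
$$
\mtx{R}_i(z) := [\sgn(\mtx{Y}_i)]^{\alpha_i} \cdot \abs{\mtx{Y}_i}^{z},
$$
defined via spectral calculus (with $\mtx{Y}_i$ reduced to the invertible case by a perturbation--continuity argument to avoid issues on the nullspace). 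Working in an eigenbasis of $\mtx{Y}_i$ and using the identity $\abs{\lambda}^\alpha = [\sgn \lambda]^\alpha \lambda^\alpha$, one checks directly: (a) $\mtx{R}_i(\alpha_i) = \mtx{Y}_i^{\alpha_i}$; (b) $\mtx{R}_i(\alpha + \iunit t) = \mtx{Q}_i \mtx{Y}_i^{\alpha}$, where $\mtx{Q}_i := [\sgn(\mtx{Y}_i)]^{\alpha_i + \alpha} \abs{\mtx{Y}_i}^{\iunit t}$ is unitary and commutes with $\mtx{Y}_i$; and (c) $\mtx{R}_i(\iunit t)$ is itself a unitary commuting with $\mtx{Y}_i$.

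Next I define
$$
G(z_1, \dots, z_k) := \Expect F\bigl(\mtx{R}_1(z_1), \dots, \mtx{R}_k(z_k)\bigr)
\quad \text{on the prism } \Delta_k(\alpha).
$$
Boundedness and continuity of $G$ follow from multilinearity of $F$, the operator-norm bound $\norm{\mtx{R}_i(z)} \leq \norm{\mtx{Y}_i}^{\real z_i}$, the integrability assumption $\Expect \norm{\mtx{Y}_i}^{\alpha} < \infty$, and a weighted AM--GM estimate on the product $\prod_i \norm{\mtx{Y}_i}^{\real z_i}$ combined with dominated convergence. The analytic section property~\eqref{eqn:analytic-section} follows because each section $z \mapsto \mtx{R}_i(c_i + z)$ is operator-analytic, $F$ is entire in each argument, and Morera plus Fubini let us move differentiation past the expectation. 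At the point $\vct{z} = (\alpha_1, \dots, \alpha_k)$ with $\beta := \sum_i \alpha_i = \alpha$, Proposition~\ref{prop:multivar-interp} then yields
$$
\abs{\Expect F(\mtx{Y}_1^{\alpha_1}, \dots, \mtx{Y}_k^{\alpha_k})}
\leq \prod_{i=1}^k \bigg( \sup_{t_\ell \in \R} \ \abs{ G(\iunit t_1, \dots, \alpha + \iunit t_i, \dots, \iunit t_k) } \bigg)^{\alpha_i / \alpha}.
$$

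To close, I bound each factor on the right. For any $\vct{t} \in \R^k$, properties (b) and (c) say that every argument of $F$ inside $G(\iunit t_1, \dots, \alpha + \iunit t_i, \dots, \iunit t_k)$ is of the form $\mtx{Q}_\ell$ (for $\ell \neq i$) or $\mtx{Q}_i \mtx{Y}_i^{\alpha}$, with each $\mtx{Q}_\ell$ a random unitary commuting with $\mtx{Y}_\ell$. The pointwise bound
$$
\abs{F\bigl(\mtx{R}_1(\iunit t_1), \dots, \mtx{R}_i(\alpha + \iunit t_i), \dots, \mtx{R}_k(\iunit t_k)\bigr)}
\leq \max_{\mtx{Q}_\ell} \ \abs{F\bigl(\mtx{Q}_1, \dots, \mtx{Q}_i \mtx{Y}_i^{\alpha}, \dots, \mtx{Q}_k\bigr)}
$$
(the right-hand side being independent of $\vct{t}$), combined with Jensen's inequality, upgrades the supremum to exactly the factor appearing in the statement.

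The main obstacle is the fiddly verification that $G$ is jointly bounded, continuous, and sectionally analytic on the entire prism $\Delta_k(\alpha)$: the power $\abs{\mtx{Y}_i}^{z}$ is unambiguously defined only on the strictly positive part of the spectrum, and joint integrability of products of operator norms must be controlled uniformly on compact subsets. Both points are dispatched by standard devices (perturb $\mtx{Y}_i$ to be invertible, apply weighted AM--GM, pass to the limit), after which the multivariate Three-Lines theorem supplies the interpolation for free.
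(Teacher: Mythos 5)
Your construction $\mtx{R}_i(z) = [\sgn(\mtx{Y}_i)]^{\alpha_i}\abs{\mtx{Y}_i}^z$ is precisely the paper's $\mtx{S}_i \mtx{P}_i^z$ with $\mtx{S}_i = \mtx{U}_i^{\alpha_i}$ and $\mtx{P}_i = \abs{\mtx{Y}_i}$ coming from the polar decomposition $\mtx{Y}_i = \mtx{U}_i\mtx{P}_i$, and the remaining steps — perturbation to invertibility, verification that $G$ is bounded, continuous, and has analytic sections, application of Proposition~\ref{prop:multivar-interp} at $\vct{z}=(\alpha_1,\dots,\alpha_k)$, and the final Jensen-plus-relaxation to a maximum over commuting unitaries — all match the paper's argument. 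The proposal is correct and is essentially the same proof.
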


\noindent
We establish this result in the next section.

Observe that Proposition~\ref{prop:interp-fancy} immediately implies
Proposition~\ref{prop:interpolation}, the interpolation result that
we use in the body of the paper.  Indeed, we recognize the
large parenthesis on the right-hand side as a geometric mean,
and we bound the geometric mean by the maximum of its components.

\subsection{Proof of Proposition~\ref{prop:interp-fancy}}
\label{app:interp-multilin-pf}

By a perturbative argument, we may assume that each matrix $\mtx{Y}_i$
is almost surely nonsingular.  Indeed, for a parameter $\eps > 0$,
we can replace each $\mtx{Y}_i$ by the modified matrix
$\widetilde{\mtx{Y}}_i := \mtx{Y}_i + \eps \gamma_i \Id$,
where $\{\gamma_i\}$ is an independent family of standard normal variables.
After completing the argument, we can draw $\eps$ down to zero to obtain
the inequality for the original random matrices.

The first step in the argument is to perform a polar factorization
of each random Hermitian matrix: $\mtx{Y}_i = \mtx{U}_i \mtx{P}_i$ where
$\mtx{U}_i$ is unitary, $\mtx{P}_i$ is almost surely positive definite,
and the two factors commute with $\mtx{Y}_i$ for each index $i$.
For clarity of argument, we introduce the unitary matrices
$\mtx{S}_i = \mtx{U}_i^{\alpha_i}$.  With this notation,
\begin{equation} \label{eqn:interp-redux}
\abs{ \Expect F\big( \mtx{Y}_1^{\alpha_1}, \ \dots,\ \mtx{Y}_k^{\alpha_k} \big) }
	= \abs{ \Expect F\big( \mtx{S}_1 \mtx{P}_1^{\alpha_1}, \ \dots,\ \mtx{S}_k \mtx{P}_k^{\alpha_k} \big) }
\end{equation}
We will perform interpolation only on the positive-definite matrices.

Next, we introduce a complex-valued function by replacing the
powers $\alpha_i$ with complex variables:
\begin{equation*} \label{eqn:multlin-complex}
G : (z_1, \dots, z_k) \mapsto
	\Expect F\big( \mtx{S}_1 \mtx{P}_1^{z_1}, \ \dots, \ \mtx{S}_k \mtx{P}_k^{z_k} \big)
	\quad\text{for $\vct{z} \in \Delta_k(\alpha)$.}
\end{equation*}
The set $\Delta_k(\alpha)$ is the simplicial prism defined in the statement
of Proposition~\ref{prop:multivar-interp}.

\begin{claim} \label{app:claim}
The function $G$ is bounded, continuous, and has analytic sections.
\end{claim}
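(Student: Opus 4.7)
The plan is to verify the three properties of $G$ in turn, with a single dominating bound doing most of the work. The hub of the argument is that, for each sample point, we can pass absolute values inside $F$ via its multilinearity and then control the powers $\mtx{P}_i^{z_i}$ by elementary spectral estimates.

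First I would establish a uniform integrable envelope. Since $F$ is multilinear on a finite-dimensional space, it is bounded: there is a constant $C_F$ with $|F(\mtx{A}_1,\dots,\mtx{A}_k)| \leq C_F \prod_{i=1}^k \norm{\mtx{A}_i}$. Unitary invariance gives $\norm{\mtx{S}_i \mtx{P}_i^{z_i}} = \norm{\mtx{P}_i^{z_i}}$, and because $\mtx{P}_i$ is positive definite we have the spectral identity $\norm{\mtx{P}_i^{z_i}} = \norm{\mtx{P}_i}^{\real z_i} = \norm{\mtx{Y}_i}^{\real z_i}$ whenever $\real z_i \geq 0$. Weighted AM--GM applied to the weights $\real z_i / \alpha$ (padded by a dummy weight of mass $1 - \sum_i \real z_i / \alpha \geq 0$ attached to the value $1$) yields
$$
\prod_{i=1}^k \norm{\mtx{Y}_i}^{\real z_i} \leq \left( 1 + \sum_{i=1}^k \norm{\mtx{Y}_i} \right)^{\alpha}
\quad\text{for every $\vct{z} \in \Delta_k(\alpha)$.}
$$
The right-hand side is an $\vct{z}$-independent random variable, integrable because $\Expect \norm{\mtx{Y}_i}^\alpha < \infty$ for each $i$ (expand the power and apply Hölder).

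With this envelope in hand, boundedness of $G$ is immediate. For continuity, I would invoke the dominated convergence theorem: for each fixed sample point, the map $z_i \mapsto \mtx{P}_i^{z_i}$ is continuous (from the spectral decomposition $\mtx{P}_i^{z_i} = \sum_j \lambda_{ij}^{z_i}\, \vct{v}_{ij}\vct{v}_{ij}^\adj$ with $\lambda_{ij} > 0$), so the integrand is jointly continuous in $\vct{z}$; the envelope justifies passing the limit through the expectation.

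Finally I would handle analytic sections. Fix distinct indices $i,j$ and a point $\vct{c} \in \Delta_k(\alpha)$, and restrict to the strip $-\real c_i < \real z < \real c_j$. For each sample point, the integrand is a product involving $\mtx{P}_i^{c_i + z}$ and $\mtx{P}_j^{c_j - z}$; each entry of these matrices is a finite sum $\sum_m \lambda_{im}^{c_i+z} \, (\vct{v}_{im} \vct{v}_{im}^\adj)_{\text{entry}}$ with $\lambda_{im} > 0$, hence entire in $z$. Since $F$ is polynomial in its entries, the integrand is analytic in $z$ on the strip. To push analyticity through the expectation I would apply Morera's theorem: for any closed triangle $T$ in the strip, Fubini (justified by the envelope, which is uniform in $z$ on compact sub-strips) gives
$$
\oint_T h(z)\idiff{z} = \Expect \oint_T F\bigl(\dots,\mtx{S}_i \mtx{P}_i^{c_i+z}, \dots, \mtx{S}_j \mtx{P}_j^{c_j-z}, \dots\bigr) \idiff{z} = 0,
$$
the inner integral vanishing by Cauchy. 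Combined with continuity, this yields analyticity of the section.

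The main obstacle is really just the measure-theoretic bookkeeping: ensuring that the envelope from the first step is integrable uniformly over the prism and applies on every complex strip one wishes to analyze, so that dominated convergence and Fubini are available for both the continuity step and the Morera argument. Once the envelope is in place the remaining steps are mechanical.
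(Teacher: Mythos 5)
Your proof follows essentially the same route as the paper: bound the multilinear $F$ by a constant times the product of norms, use unitary invariance and the polar decomposition to write $\norm{\mtx{S}_i\mtx{P}_i^{z_i}}=\norm{\mtx{Y}_i}^{\real z_i}$, apply AM--GM to obtain an integrable envelope, then dominated convergence for continuity and Morera plus Fubini--Tonelli for analytic sections. Your uniform envelope $(1+\sum_i\norm{\mtx{Y}_i})^\alpha$ is a slightly cleaner choice than the paper's $\beta$-dependent $(\beta^{-1}\sum_i\norm{\mtx{Y}_i})^\beta$ since it is a single $\vct{z}$-independent dominating function, but the argument is otherwise the same.
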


\noindent
These are the properties required to apply the interpolation
result, Proposition~\ref{prop:multivar-interp}.

Let us assume that Claim~\ref{app:claim} holds so that we can complete the proof.
The relation~\eqref{eqn:interp-redux} and Proposition~\ref{prop:multivar-interp}
imply that
$$
\begin{aligned}
\abs{ \Expect F( \mtx{Y}_1^{\alpha_1}, \ \dots, \ \mtx{Y}_k^{\alpha_k} ) }
	&= \abs{ \Expect F(\mtx{S}_1 \mtx{P}_1^{\alpha_1}, \ \dots, \ \mtx{S}_k \mtx{P}_k^{\alpha_k}) } \\
	&\leq \bigg( \prod\nolimits_{i=1}^k \sup\nolimits_{t_\ell \in \R} \abs{ \Expect
	F\big( \mtx{S}_1 \mtx{P}_1^{\iunit t_1}, \ \dots,\ \mtx{S}_i \mtx{P}_i^{\alpha + \iunit t_i}, \
	\dots \mtx{S}_k \mtx{P}_k^{\iunit t_k} \big) }^{\alpha_i} \bigg)^{1/\alpha}.
\end{aligned}
$$
%
%
Fix an index $i$ in the product. 
Introduce the unitary matrix
$\mtx{Q}_{ii}(t_i) = \mtx{S}_i \mtx{P}_i^{\iunit t_i} \mtx{U}_i^{-\alpha}$, where $\mtx{U}_i$ is
the polar factor of $\mtx{Y}_i$.  It follows that
$$
\mtx{S}_i \mtx{P}_i^{\alpha + \iunit t_i}
	= \big(\mtx{S}_i \mtx{P}_i^{\iunit t_i} \mtx{U}_i^{-\alpha} \big)\big( \mtx{U}_i^\alpha \mtx{P}_i^\alpha \big)
	= \mtx{Q}_{ii}(t_i) \mtx{Y}_i^\alpha
$$
Similarly, for each $\ell \neq i$, we can define $\mtx{Q}_{i\ell}(t_{\ell}) = \mtx{S}_\ell \mtx{P}_\ell^{\iunit t_\ell}$.
Therefore,
$$
\begin{aligned}
\abs{ \Expect F\big( \mtx{Y}_1^{\alpha_1}, \ \dots,\ \mtx{Y}_k^{\alpha_k} \big) }
	&\leq \left( \prod\nolimits_{i=1}^k \sup_{t_\ell \in \R} \abs{ 
	\Expect F\big( \mtx{Q}_{i1}(t_1), \ \dots,\ \mtx{Q}_{ii}(t_i) \mtx{Y}_i^{\alpha}, \
	\dots, \ \mtx{Q}_{ik}(t_k) \big) }^{\alpha_i} \right)^{1/\alpha} \\
	&\leq \left( \prod\nolimits_{i=1}^k \bigg[ \Expect \max_{\mtx{Q}_{\ell}} \abs{ 
	F\big( \mtx{Q}_1, \ \dots,\ \mtx{Q}_i \mtx{Y}_i^{\alpha}, \
	\dots, \ \mtx{Q}_k \big) } \bigg]^{\alpha_i} \right)^{1/\alpha}.
\end{aligned}
$$
By construction $\mtx{Q}_{i\ell}(t_{\ell})$ commutes with $\mtx{Y}_\ell$ for each index $\ell$.
In the second line, we apply Jensen's inequality.
Then we relax the supremum to include all unitary matrices $\mtx{Q}_{\ell}$
that commute with the corresponding $\mtx{Y}_{\ell}$.  We can replace the supremum
with a maximum since the unitary group is compact and the function $F$ is continuous.
This is what we needed to show.

Finally, we must verify Claim~\ref{app:claim}. Each multilinear function $F : (\mathbb{M}_d)^k \to \C$
is bounded and continuous:
$$
\abs{F(\mtx{A}_1,\ \dots,\ \mtx{A}_k)} \leq \mathrm{Const} \cdot \prod\nolimits_{i=1}^k \norm{\mtx{A}_i}.
$$
Fix a point $\vct{z} \in \Delta_k(\alpha)$, and let $\beta := \sum_{i=1}^k \real z_i$.
Applying this observation to the function $G$,
$$
\begin{aligned}
\abs{G(z_1, \dots, z_k)} &= \abs{ \Expect F( \mtx{S}_1 \mtx{P}_1^{z_1}, \ \dots, \ \mtx{S}_k \mtx{P}_k^{z_k}) }
	\leq \mathrm{Const} \cdot \Expect \bigg[ \prod\nolimits_{i=1}^k \norm{\smash{\mtx{S}_i \mtx{P}_i^{z_i}}} \bigg] \\
	&= \mathrm{Const} \cdot \Expect \bigg[ \prod\nolimits_{i=1}^k \norm{\mtx{Y}_i}^{\real z_i} \bigg]
	\leq \mathrm{Const} \cdot \Expect \left[ \frac{1}{\beta} \sum\nolimits_{i=1}^k \norm{\mtx{Y}_i} \right]^{\beta}.
\end{aligned}
$$
The first estimate follows from Jensen's inequality and the bound on the multilinear function $F$.
The second inequality depends on the unitary invariance of the spectral norm, the identity
$\norm{\mtx{P}^z} = \norm{\mtx{P}}^{\real z}$, and the polar decomposition $\mtx{Y}_i = \mtx{U}_i \mtx{P}_i$.
The last bound is the inequality between the geometric and arithmetic mean.
Since $\Expect \norm{\mtx{Y}_i}^\alpha < \infty$, and $\beta \leq \alpha$,
we conclude that $G$ is bounded.  Since $F$ is continuous,
an application of the Dominated Convergence Theorem shows that $G$ is a continuous function as well.

The proof that $G$ has analytic sections is similar.  Fix a vector $\vct{c} \in \Delta_k(\alpha)$.
Since $F$ is multilinear, it is easy to check that the map
$$
z \mapsto F\big( \mtx{S}_1 \mtx{P}_1^{c_1}, \ \dots, \ \mtx{S}_i \mtx{P}_i^{c_i + z}, \ \dots, \
	\mtx{S}_j \mtx{P}_j^{c_j - z}, \ \dots,\ \mtx{S}_k \mtx{P}_1^{c_k} \big)
	\quad\text{is analytic on $-\real c_i < z < \real c_j$}
$$
for any fixed choice of $\mtx{S}_\ell$ and $\mtx{P}_{\ell}$ and any pair $(i, j)$
of distinct indices.  Together, the Morera Theorem and the Fubini--Tonelli Theorem allow
us to conclude that
$$
z \mapsto \Expect F\big( \mtx{S}_1 \mtx{P}_1^{c_1}, \ \dots, \ \mtx{S}_i \mtx{P}_i^{c_i + z}, \ \dots, \
	\mtx{S}_j \mtx{P}_j^{c_j - z}, \ \dots, \mtx{S}_k \mtx{P}_1^{c_k} \big)
$$
also is analytic.  Therefore, the analytic section property~\eqref{eqn:analytic-section}
is in force.  Claim~\ref{app:claim} is established.

\section*{Acknowledgments}

Afonso Bandeira is responsible for the argument in Section~\ref{sec:spin},
and Ramon van Handel has offered critical comments.  Parts of this research were completed
at Mathematisches Forschungsinstitut Oberwolfach (MFO) and at Instituto Nacional de
Matem{\'a}tica Pura e Aplicada (IMPA) in Rio de Janeiro.  The author gratefully acknowledges
support from ONR award N00014-11-1002, a Sloan Research Fellowship, and the
Gordon \& Betty Moore Foundation.

\bibliographystyle{myalpha}
\newcommand{\etalchar}[1]{$^{#1}$}

\end{document}